\numberwithin{equation}{section}
\newtheorem{thm}{Theorem}[section]
\newtheorem{prop}[thm]{Proposition}
\newtheorem{lem}[thm]{Lemma}
\newtheorem{cor}[thm]{Corollary}
\newcommand{\nn}{\nonumber}
\newcommand{\bra}[1]{\langle #1 |}        
\newcommand{\ket}[1]{{| #1 \rangle}}      
\newcommand{\one}{\mathbf{1}}
\newcommand{\C}{{\mathbb C}}
\newcommand{\Z}{{\mathbb Z}}
\newcommand{\E}{{\mathcal E}}
\newcommand{\F}{\mathcal F}
\newcommand{\cP}{\mathcal{P}}
\newcommand{\cR}{\mathcal{R}}
\newcommand{\J}{J}
\newcommand{\bs}{\boldsymbol}
\newcommand{\GS}{\mathfrak{S}}
\newcommand{\gl}{\mathfrak{gl}}
\newcommand{\e}{\epsilon}
\newcommand{\la}{\lambda}
\newcommand{\La}{\Lambda}
\newcommand{\id}{{\rm id}}
\newcommand{\Ker}{\mathop{\rm Ker}}
\newcommand{\res}{{\rm res}}
\newcommand{\Res}{\mathop{\rm res}}
\newcommand{\Sym}{\mathrm{Sym}}
\newcommand{\Tr}{{\rm Tr}}
\newcommand{\gge}{\geqslant}
\newcommand{\lle}{\leqslant}
\newcommand{\Cp}{C^{\perp}}
\newcommand{\ep}{e^{\perp}}
\newcommand{\fp}{f^{\perp}}
\newcommand{\hp}{h^{\perp}}
\newcommand{\on}{\operatorname}
\newcommand{\mc}{\mathcal}
\begin{document}
\begin{title}[Quantum toroidal  $\mathfrak{gl}_1$ and Bethe ansatz]
{Quantum toroidal $\mathfrak{gl}_1$ and Bethe ansatz}
\end{title}
\author{B. Feigin, M. Jimbo, T. Miwa, and E. Mukhin}
\address{BF: National Research University Higher School of Economics, Russian Federation, International Laboratory of Representation Theory 
and \newline Mathematical Physics, Russia, Moscow,  101000,  Myasnitskaya ul., 20 and Landau Institute for Theoretical Physics,
Russia, Chernogolovka, 142432, pr.Akademika Semenova, 1a
}
\email{bfeigin@gmail.com}
\address{MJ: Department of Mathematics,
Rikkyo University, Toshima-ku, Tokyo 171-8501, Japan}
\email{jimbomm@rikkyo.ac.jp}
\address{TM: Institute for Liberal Arts and Sciences,
Kyoto University, Kyoto 606-8316,
Japan}\email{tmiwa@kje.biglobe.ne.jp}
\address{EM: Department of Mathematics,
Indiana University-Purdue University-Indianapolis,
402 N.Blackford St., LD 270,
Indianapolis, IN 46202, USA}\email{mukhin@math.iupui.edu}

\dedicatory{Dedicated to  Rodney Baxter on the occasion of his 75th birthday}

\begin{abstract} 
We establish the method of Bethe ansatz for the XXZ type model obtained from the 
$R$ matrix associated to quantum toroidal $\gl_1$. 
We do that by using shuffle realizations of the modules 
and by showing that the Hamiltonian of the model is obtained from a simple multiplication operator 
by taking an appropriate quotient. We expect this approach to be applicable to a wide variety of 
models.
\end{abstract}

\maketitle 

\section{Introduction}
The XXZ type models constitute 
a well-known large family of integrable quantum models, 
which was one of the main motivations for the very discovery of quantum groups. 

These models arise in the following algebraic setting.
We start with a quantum algebra with a triangular decomposition $\E=\E_>\otimes \E_0\otimes \E_<$,  
and an associated $R$ matrix $\cR$ in a completion of 
$\E_\gge \otimes\E_\lle$, where $\E_\gge=\E_>\otimes \E_0$ and 
$\E_\lle=\E_0\otimes \E_<$. We also fix a group like element $t$ in (a completion of) $\E$. 
For an $\E$ module $U$, we have the transfer matrix $T_U(t)=(1\otimes\on{Tr}_U)((1\otimes t)\cR)$, 
provided the trace is well-defined. 
The assignment $U\mapsto T_U(t)$ gives us a map from the Grothendieck ring of a suitable 
category of $\E$ modules to a completion of $\E_{\gge}$.
The standard properties of the trace and the $R$ matrix imply that 
this map is a ring homomorphism, and that the image is a commutative subalgebra. 
Given a suitable $\E$ module $V$ the commutative subalgebra of transfer 
matrices acts in $V$ and produces the XXZ type Hamiltonians associated to $\E$ and $V$. 

The problem of diagonalizing the action of the XXZ type Hamiltonians 
has been extensively studied for more than 80 years. It is done almost exclusively 
by the Bethe ansatz method. The approach is always the same: 
one writes a candidate for the eigenvector depending on auxiliary parameters 
in some explicit form--- the so called off-shell Bethe vector. 
Then one proves that if the parameters satisfy 
a system of algebraic equations, then the off-shell Bethe vector is indeed an eigenvector 
with an explicit eigenvalue. The system of equations is called Bethe equations and the corresponding 
eigenvector  is  called  Bethe vector. 
Then, in good situations, one proves that the Bethe ansatz is complete, 
meaning that the  Bethe vectors form a basis of the representation $V$ modulo explicit symmetries if any.

In this paper, we study the case of $\E$ being the quantum toroidal algebra of type $\gl_1$, 
also known as 
elliptic Hall algebra, $(q,\gamma)$ analog of $\mathcal{W}_{1+\infty}$, 
Ding-Iohara algebra, etc.. 
This algebra enjoys a wave of popularity 
due to its appearance in geometry \cite{BS}, \cite{FT}, \cite{S}, 
\cite{SV1},\cite{SV2} and in integrable systems \cite{FKSW}, \cite{FKSW2}, \cite{KS}.

It appears that the known methods of finding off-shell Bethe vectors
are not directly applicable to $\E$.
We propose an alternative way to obtain the spectrum of the Hamiltonians. 
The idea is to introduce an appropriate space of functions, 
and to identify the Hamiltonians with the projection of simple operators of multiplication by 
symmetric functions. 
The Bethe equations appear naturally as the condition for describing the kernel of the projection.
We expect that this method can be applied to many cases 
including the ones where the standard Bethe ansatz technique is already established. 

\medskip

Let us describe the logic of our approach in more detail. The quantum toroidal $\gl_1$ algebra $\E$ depends on complex parameters $q_1,q_2,q_3$ such that $q_1q_2q_3=1$. The algebras $\E_>$, $\E_<$ and $\E_0$ are generated by currents $e(z)$, $f(z)$ and $\psi^\pm(z)$ (plus central elements and their duals) respectively. The commutation relations are similar to that of the 
quantum affine $\mathfrak{sl}_2$ algebra, but they are written 
in terms of the cubic polynomial $g(z,w)=(z-q_1w)(z-q_2w)(z-q_3w)$, 
see Section \ref{toroidal-algebra}. 
There is a projective action of the  
group $\on{SL}(2,\Z)$ on $\E$ by automorphisms. 
Along with the initial currents $e(z)$, $f(z)$ and $\psi^\pm(z)$, 
we also use the currents $e^\perp(z)$, $f^\perp(z)$ and $\psi^{\pm,\perp}(z)$ obtained by 
applying the automorphism from $\on{SL}(2,\Z)$ corresponding to the rotation by 90 degrees
\cite{BS},\cite{M}. We call them ``perpendicular" currents.

We define the coproduct and the $R$ matrix in terms of perpendicular currents. 
We consider modules which are lowest weight
modules with respect to initial currents, namely 
modules generated by a vector  $\ket{\emptyset}$ such that
\begin{align}
f(z)\ket{0}=0,\quad \psi^\pm(z)\ket{0}=\phi(z)\ket{0}, 
\label{lwt-cond}
\end{align}
where $\phi(z)$ is a rational function. 
The most important example is a family of Fock modules $\F(u)$ 
depending on a complex parameter $u$. 
These modules are irreducible under the Heisenberg subalgebra of $\E$ 
generated by perpendicular currents $\psi^{\pm,\perp}(z)$.
Other perpendicular currents $e^\perp(z)$, $f^\perp(z)$
act in $\F(u)$ by vertex operators \cite{FKSW}, 
while operators $\psi^{\pm}(z)$ can be identified with Macdonald operators \cite{FHHSY}, \cite{FFJMM2}.

Among the Hamiltonians of the model, the simplest is the degree one term  $H_p$
of the transfer matrix $T_{\F(u)}(p^{d^\perp})$, where $p\in\C$ and $d^\perp$ is the degree operator 
counting $e(z)$ as $1$ and $f(z)$ as $-1$, see Lemma \ref{int expl}.
It turns out that $H_p$ coincides with the operator considered in 
\cite{FKSW},\cite{FKSW2},\cite{KS} in relation to the deformed Virasoro algebra. 
Operator $H_p$ acting in a generic tensor product of Fock modules 
for generic $p$ has simple spectrum, and we do not consider other Hamiltonians in the present paper.

Following the ideas of \cite{FO}, \cite{Ng}, we realize algebra $\E_>$ in an appropriate space of functions $Sh_{0}$, see Section \ref{shuffle sec}.
 We also introduce another space of functions $Sh_{1}(u)$ together with left and right 
actions of algebra $Sh_{0}$. 
Moreover, we extend the left action to the action of $\E$. 
We denote $\J_0$  the image of the right action: $\J_0=Sh_{1}(u)Sh_{0}'$,  
where the prime denotes the augmentation ideal. We use certain filtration, 
see Appendix \ref{gordon sec}, to prove that the quotient $Sh_{1}(u)/\J_0$ 
is isomorphic to the Fock module $\F(u)$ as $\E$ module. We introduce a subspace $N$ 
of functions in $Sh_{1}(u)$ defined by certain regularity conditions and show that  
$N\oplus \J_0=Sh_{1}(u)$, see Section \ref{L sec}. 
Under a natural embedding of $Sh_{1}(u)$ to a completion of $\E_{\gge}$, the space $N$ is identified with the space of matrix elements of $L$ operators of the form $L_{\emptyset,v}=(1\otimes \bra{\emptyset}) \cR (1\otimes v)$, $v\in\F(u)$. Moreover under the projection $Sh_{1}(u)\to Sh_{1}(u)/\J_0=\F(u)$, the function corresponding to $L_{\emptyset,v}$ is mapped to $v$.

The coefficients of the series $\psi^+(z)$ act in the space of functions 
$Sh_{1}(u)$ by multiplications by symmetric polynomials. 
It is easy to see that $H_0=\lim_{p\to 0}H_p$ coincides (up to an explicit constant) 
with the linear term $h_1$ of $\psi^+(z)$, and in particular that, 
in the subspace $Sh_{1,n}(u)$ of functions in $n$ variables, $H_0$ acts 
simply by multiplication by $\sum_{i=1}^nx_i$ (up to multiplicative and additive explicit constants),  
see Theorem \ref{main}, \eqref{h1 act}, \eqref{first-int}. 
In the limit $p\to 0$ the algebra of all Hamiltonians of 
the model coincides with the algebra generated by coefficients of $\psi^+(z)$. 

Finally, we define the space of $p$-commutators: 
$\J_p=\{Sh_{1}(u)g -p^{\deg g}g\,Sh_{1}(u) \mid\ g\in Sh_{0}'\}$. 
The multiplication by symmetric polynomials  clearly preserves 
this space,  
and for generic $p$ we have the direct sum decomposition of vector spaces: 
$N\oplus \J_p=Sh_{1}(u)$. 

\medskip

Our principal result is: {\it the projection of operator $H_0$ acting in $Sh_{1}(u)$ to the space of matrix elements of $L$ operators $N$ along space $\J_p$ coincides with Hamiltonian $H_p$ acting on $\F(u)=N$.}  In other words,  $\on{Pr}_{\J_0} H_p v=\on{Pr}_{\J_p} H_0 v$ for all $v\in N$, see Theorem \ref{main}. 

\medskip

This identification immediately leads to the Bethe equation and the computation of the spectrum. 

Namely, we consider the dual space to $Sh_{1}(u)$ 
and evaluation functionals defined as evaluation of functions in $Sh_{1}(u)$ 
at fixed complex numbers $\{a_i\}$. 
Such a functional is obviously an eigenvector with respect  
to multiplication by a function $f$, with the eigenvalue given by evaluation of $f$ 
at $\{a_i\}$. Also clearly, the evaluation functional has $\J_p$ in the kernel if and 
only if the evaluation numbers $\{a_i\}$ satisfy the Bethe equation 
\begin{align*}
\phi(a_i)\prod_{j(\neq i)} \frac{g(a_j,a_i)}{g(a_i,a_j)}=p^{-1}
\end{align*}
for all $i$, see (\ref{BAE}) and Theorem \ref{spectrum}, 
where $\phi(z)$ is the weight of the module
in \eqref{lwt-cond}.
Therefore, we obtain a description of the spectrum of the Hamiltonians 
in the dual module $V=\F(u)^*$.

We also study the off-shell Bethe vector. 
A result of \cite{FHSSY} allows us to write
the canonical element of $\F(u)^*\otimes \F(u)$ in the form
$\sum_{\la}\bra{\la}\otimes f_\la(x)\in \F(u)^*\otimes
N$
with explicit functions $f_\la(x)$. 
The latter is the off-shell Bethe vector, from which the 
Bethe vector is obtained by evaluating the second component at $\{a_i\}$. 
We give the result in Proposition \ref{off-shell}. 

In this paper we consider only tensor products of Fock spaces of $\E$, but we expect such a scheme can be used for many modules over many quantum algebras. Also we skip the question of the completeness of the Bethe ansatz here, but we expect it can be proved for generic $p$ by deforming the $p=0$ evaluation 
maps $\rho_\la$ 
described in Appendix \ref{gordon sec} in a standard way.

\medskip

We note that operators $\psi^{\pm}(z)$ acting in $\F(u)$ can be identified with operators acting in equivariant $K$-theory of the Hilbert scheme of points on $\C^2$, where $q_1,q_3$ are equivariant parameters. Then the algebra of the XXZ type 
Hamiltonians \{$T_V(p^{d^\perp})$
\} provides the deformation of these operators which is expected to be related to ``quantum equivariant $K$-theory". 
Such an interpretation was one of motivations for our work.

In the conformal limit, algebra $\E$
becomes the $W$ algebra, and the corresponding integrals of motion in relation 
with Bethe equations were studied in \cite{L}, \cite{AL}. 
Another Hamiltonian of similar kind was considered in \cite{Sa1}, \cite{Sa2}. 
We also feel that there is some connection to the work
\cite{NS}, where the authors find a connection between 
Bethe ansatz and supersymmetric gauge theory.

\medskip

The paper is constructed as follows.
In Section \ref{toroidal-algebra} we describe algebraic properties of quantum toroidal $\gl_1$ algebra and the Fock module. In Section \ref{shuffle sec} we establish functional realizations of $\E_>$ and the Fock module. For that we use Gordon filtration established in Appendix \ref{gordon sec}. We study matrix elements of $L$ operators and their relation to the shuffle algebras in 
Section \ref{L oper sec}. 
In Section \ref{Bethe sec} we describe the XXZ type Hamiltonians, compute explicitly the first one and diagonalize it. In Section \ref{mult fock sec} we extend our results to the case of tensor product of Fock modules.

\section{Quantum toroidal $\mathfrak{gl}_1$}\label{toroidal-algebra}

In this section, we introduce our notation concerning the quantum toroidal $\mathfrak{gl}_1$ algebra. 

\subsection{Algebra $\E$}
Fix complex numbers $q, q_1,q_2,q_3$ satisfying $q_2=q^2$ and $q_1q_2q_3=1$. 
We assume further that, for integers $l,m,n\in\Z$,  
$q_1^lq_2^mq_3^n=1$ holds only if $l=m=n$. 
We set 
\begin{align*}
& g(z,w)=(z-q_1w)(z-q_2w)(z-q_3w),\\
&\kappa_r=(1-q_1^r)(1-q_2^r)(1-q_3^r).
\end{align*}

The quantum toroidal algebra of type $\gl_1$,  which we denote by  $\E$,  
is a $\C$-algebra 
generated by elements
\begin{align*}
e_k,\ f_k \quad (k\in \Z),\quad h_r\quad (r\in \Z\backslash\{0\})
\end{align*}
and invertible elements
\begin{align*}
C,\ \Cp,\ D,\ D^\perp,
\end{align*}
subject to the relations given below.
We write them in terms of the generating series
\begin{align*}
&e(z) =\sum_{k\in \Z} e_{k}z^{-k}, \quad 
f(z) =\sum_{k\in\Z} f_{k}z^{-k}, \\
&\psi^{\pm}(z) = (\Cp)^{\mp 1} 
\exp\bigl(\sum_{r=1}^\infty \kappa_r h_{\pm r}z^{\mp r}\bigr)\,.
\end{align*}
The defining relations of $\mathcal{E}$ read as follows. 
\begin{gather*}
\text{$C$, $\Cp$ are central},\quad
D D^{\perp}=D^{\perp} D,\\
D e(z)=e(qz)D,\quad D f(z)=f(qz)D,\quad 
D \psi^\pm(z) =\psi^\pm(qz)D,\\
D^{\perp}e(z)=q e(z)D^{\perp},\quad
D^{\perp}f(z)=q^{-1} f(z)D^{\perp},\quad
D^\perp\psi^{\pm}(z)=\psi^{\pm}(z)D^{\perp},
\\
\psi^\pm(z)\psi^\pm(w) = \psi^\pm(w)\psi^\pm (z), 
\\
\frac{g(C^{-1}z,w)}{g(C z,w)}\psi^-(z)\psi^+ (w) 
=
\frac{g(w,C^{-1}z)}{g(w,C z)}\psi^+(w)\psi^-(z),
\\
g(z,w)\psi^\pm(C^{(-1\mp1)/2}z)e(w)
+g(w,z)e(w)\psi^\pm(C^{(-1\mp1) /2}z)=0,
\\
g(w,z)\psi^\pm(C^{(-1\pm1)/2}z)
f(w)+g(z,w)f(w)\psi^\pm(C^{(-1\pm1)/2}z)=0\,,
\\
[e(z),f(w)]=\frac{1}{\kappa_1}
(\delta\bigl(\frac{Cw}{z}\bigr)\psi^+(w)
-\delta\bigl(\frac{Cz}{w}\bigr)\psi^-(z)),\\
g(z,w)e(z)e(w)+g(w,z)e(w)e(z)=0, \\
g(w,z)f(z)f(w)+g(z,w)f(w)f(z)=0,\\
\mathop{\mathrm{Sym}}_{z_1,z_2,z_3}z_2z_3^{-1}
[e(z_1),[e(z_2),e(z_3)]]=0\,,\\
\mathop{\mathrm{Sym}}_{z_1,z_2,z_3}z_2z_3^{-1}
[f(z_1),[f(z_2),f(z_3)]]=0\,.
\end{gather*}

In particular we have the relations
\begin{align}
&[h_{r},e_n]=-\frac{1}{r}\, e_{n+r}\, C^{(-r-|r|)/2}\,,
\label{he}\\
&[h_{r},f_n]=\frac{1}{r}\, f_{n+r}\, C^{(-r+|r|)/2}\,,
\label{hf}\\
&[h_{r},h_{s}]=
\delta_{r+s,0}\,\frac{1}{r} \frac{C^{r}-C^{-r}}{\kappa_r}\,,
\label{hh}
\end{align}
for all $r,s\in\Z\backslash\{0\}$ and $n\in\Z$. 

The subalgebra of $\E$ generated by
$e_n,f_n$ ($n\in\Z$), $h_r$ ($r\in\Z\backslash\{0\}$) and $C,C^\perp$  
will be denoted by $\E'$.  

\begin{figure}[t]
\setlength{\unitlength}{1mm}
{
\begin{picture}(60,40)(-10,20)
\put(22,22){$e_0$}
\put(-5,35){$h_{-1}$}
\put(45,35){$h_{1}$}
\put(22,48){$f_0$}
\put(15,27){\vector(-2,1){10}}
\put(40,32){\vector(-2,-1){10}}
\put(5,40){\vector(2,1){10}}
\put(30,45){\vector(2,-1){10}}
\end{picture}
}
\caption{Automorphism $\theta$.}\label{fig1}
\end{figure}


Algebra $\E$ admits an automorphism $\theta$ of order 4 
\cite{BS,M} such that (see Fig.\ref{fig1}) 
\begin{align}
\theta&:e_0\mapsto h_{-1},\ 
h_{-1}\mapsto f_0,\ f_0\mapsto h_1,\ h_1\mapsto e_0,
\label{theta}\\
&\Cp\mapsto C\,,\ C\mapsto (\Cp)^{-1}\,,\
D^\perp \mapsto D\,,\ D\mapsto (D^\perp)^{-1}\,.
\nn
\end{align}
Quite generally, we write $x^\perp=\theta^{-1}(x)$ 
for an element $x\in \E$.  In this notation
\begin{align*}
\ep_0=h_1,\quad \fp_0=h_{-1},\quad \hp_1=f_0,\quad \hp_{-1}=e_0.
\end{align*}
The relations \eqref{he}--\eqref{hh} imply further that 
\begin{align*}
&\ep_1=f_1\Cp,\quad \ep_{-1}=e_1C^{-1},\\
&\fp_1=f_{-1}C,\quad \fp_{-1}=e_{-1}(\Cp)^{-1},
\\
&\ep_{m+1}=[\ep_m,f_0]\Cp,\quad
\ep_{-m-1}=[e_0,\ep_{-m}], \\
&\fp_{m+1}=[f_0,\fp_m],\quad \fp_{-m-1}=[\fp_{-m},e_0](\Cp)^{-1}.
\end{align*}

Algebra $\E$ is equipped with a $\Z^2$ grading defined by the assignment 
\begin{align}
&\deg e_n=(1,n),\quad
\deg f_n=(-1,n),\quad
\deg h_r=(0,r),
\label{grading1}\\
&
\deg x=(0,0)\quad (x=C,C^{\perp},D,D^\perp).
\label{grading2}
\end{align}
We have 
\begin{align*}
&\deg \ep_n=(-n, 1),
\quad \deg \fp_n=(-n,-1),\quad \deg \hp_r=(-r,0).
\end{align*}
For a homogeneous element $x\in\E$ with $\deg x=(n_1,n_2)$, 
we say that $x$ has {\it principal degree} $n_1$ and {\it homogeneous degree} 
$n_2$ and write 
\begin{align*}
\mathrm{pdeg}\, x=n_1,\quad \mathrm{hdeg}\, x=n_2\,. 
\end{align*} 
Note that 
$D^\perp x(D^\perp)^{-1}=q^{\mathrm{pdeg}\, x}x$ and 
$DxD^{-1}=q^{-\mathrm{hdeg}\, x}x$.
 
\begin{figure}[t]
\setlength{\unitlength}{.8mm}
\begin{picture}(120,120)(-40,-60)
\put(6,55){$\vdots$}\put(26,55){$\vdots$}\put(46,55){$\vdots$}
\put(17,58){\line(0,-1){65}}
\put(17,12){\line(1,0){20}}
\put(17,-7){\line(1,0){20}}
\put(37,12){\line(0,-1){67}}
\put(-56,54){\vector(0,-1){5}}
\put(-56,54){\vector(1,0){5}}
\put(-49,53){$n_2$}
\put(-57,45){$n_1$}
\put(-16,54){\oval(10,8)}
\put(-18,52){$\E_{\lle}^\perp$}
\put(66,54){\oval(10,8)}
\put(64,52){$\E_{\lle}$}
\put(66,-52){\oval(10,8)}
\put(64,-54){$\E_{\gge}^\perp$}
\put(-16,-52){\oval(10,8)}
\put(-18,-54){$\E_{\gge}$}
\put(5,40){$\fp_2$}\put(25,40){$\hp_2$}
\put(45,40){$\ep_2$}
\put(-40,20){$\cdots$}\put(-20,20){$f_{-2}$}
\put(-2,20){$f_{-1}$, }\put(6,20){\ $\fp_1$}
\put(20,20){$f_0$, }\put(25,20){\ $\hp_1$}
\put(40,20){$f_1$, }\put(45,20){\ $\ep_1$}
\put(60,20){$f_2$}\put(80,20){$\cdots$}
\put(-40,00){$\cdots$}\put(-20,00){$h_{-2}$}
\put(-2,00){$h_{-1}$}\put(6,00){$, \fp_{0}$}
\put(24,00){$\bullet$}
\put(40,00){$h_1$}\put(45,00){$, \ep_0$}
\put(60,00){$h_2$}\put(80,00){$\cdots$}
\put(-40,-10){\line(1,0){75}}
\put(35,-10){\line(0,1){19}}
\put(15,-10){\line(0,1){19}}
\put(15,9){\line(1,0){65}}
\put(-40,-20){$\cdots$}\put(-20,-20){$e_{-2}$}
\put(-2,-20){$e_{-1}$}\put(6,-20){$, \fp_{-1}$}
\put(20,-20){$e_0$}\put(25,-20){$, \hp_{-1}$}
\put(40,-20){$e_1$}\put(45,-20){$, \ep_{-1}$}
\put(60,-20){$e_2$}\put(80,-20){$\cdots$}
\put(5,-40){$\fp_{-2}$}
\put(25,-40){$\hp_{-2}$}
\put(45,-40){$\ep_{-2}$}
\put(7,-55){$\vdots$}\put(27,-55){$\vdots$}\put(47,-55){$\vdots$}
\end{picture}
\caption{Subalgebras $\E_{\gge}$, $\E_{\lle}$, $\E_{\gge}^\perp$, $\E_{\lle}^\perp$. 
The elements $C, C^\perp,D,D^\perp$ 
placed at the center $\bullet$ are common to all these subalgebras.}
\label{fig2}
\end{figure}

Introduce the following subalgebras:
\begin{align*}
&\E_{\gge}=\langle e_n\ (n\in \Z), \ h_{r}\ (r>0),\ C,\Cp, D, D^\perp
 \rangle \,,
\\
&\E_{\lle}=\langle f_n\ (n\in \Z), \ h_{-r}\ (r>0),\ C,\Cp, D, D^\perp
\rangle \,,
\\
&\E_{\gge}^\perp=\langle \ep_n\ (n\in \Z), \ \hp_{r}\ (r>0),\ C,\Cp, D, D^\perp  
\rangle \,,
\\
&\E_{\lle}^\perp=\langle \fp_n\ (n\in \Z), \ \hp_{-r}\ (r>0),\ \ C,\Cp, D, D^\perp  
\rangle\,.
\end{align*}
We picture generators of algebra $\E$ and their perpendicular counterparts on a plane according to their grading. 
The subalgebras $\E_{\gge},\E_{\lle},\E_{\gge}^\perp, \E_{\lle}^\perp$ 
are generated by the elements appearing 
respectively in the lower, upper, right 
and left half plane, see Fig. \ref{fig2}. 

We set also
\begin{align*}
&\E_{>}=\langle e_n\ (n\in \Z) \rangle\,,
\quad \E_{<}=\langle f_n\ (n\in \Z) \rangle\,,
\\
&\E_{>}^\perp=\langle \ep_n\ (n\in \Z) \rangle\,,
\quad \E_{<}^\perp=\langle \fp_n\ (n\in \Z) \rangle\,.
\end{align*}
One can easily check that $h^\perp_{-r},Ce^\perp_{-r}\in\E_>$
for $r>0$.

\subsection{Bialgebra sturcture and $R$ matrix}        
Algebra $\E$ is endowed with a topological bialgebra structure.
We choose the following coproduct $\Delta$ and counit $\varepsilon$, 
defined in terms of the perpendicular generators,
\begin{align}
&\Delta(e^\perp_n)=\sum_{j\ge0}e^\perp_{n-j}\otimes 
\psi^{+,\perp}_j\bigl(C^\perp\bigr)^n+1\otimes e^\perp_n\,,
\label{copro1}\\
&\Delta(f^\perp_n)=f^\perp_n\otimes 1+
\sum_{j\ge0}\psi^{-,\perp}_{-j}\bigl(C^\perp\bigr)^n\otimes f^\perp_{n+j}\,,
\label{copro2}\\
&\Delta h_{r}^\perp=h^\perp_{r}\otimes 1+\bigl(C^\perp\bigr)^{-r}\otimes h^\perp_{r}
\,,
\label{copro3}\\
&\Delta h_{-r}^\perp=h^\perp_{-r}\otimes \bigl(C^\perp\bigr)^r+1\otimes h^\perp_{-r}
\,,
\label{copro4}\\
&\Delta\, x=x\otimes x\quad 
(x=C,\Cp,D,D^\perp)\,,
\label{copro5}
\\
&\varepsilon(e^\perp_n)=\varepsilon(f^\perp_n)=0,
\quad \varepsilon(h^\perp_{\pm r})=0\,,
\quad 
\varepsilon(x)=1\quad (x=C,\Cp,D,D^\perp)\,,
\label{counit}
\end{align}
for all $n\in \Z$ and $r>0$. 
Here we set 
$\psi^{\pm,\perp}(z)=\sum_{\pm j\ge0}\psi^{\pm,\perp}_jz^{-j}$,
$\psi^{\pm,\perp}_0=C^{\pm 1}$.

\medskip 

Quite generally, a bialgebra pairing on a bialgebra $A$
is a symmetric non-degenerate bilinear form $(~,~):A\times A\to \C$ 
with the properties
\begin{align*}
&(a,b_1b_2)=(\Delta(a), b_1\otimes b_2),\quad
(a,1)=\varepsilon(a)\,
\end{align*}
for any $a,b_1,b_2\in A$. 
With each such pair $(A,(~,~))$, there is an associated bialgebra
$DA$ called the Drinfeld double of $A$. 
As a vector space $DA=A\otimes A^{\mathrm{op}}$,  
where $A^{\mathrm{op}}$ is a copy of $A$ endowed with the opposite coalgebra
structure. Moreover 
$A^+=A\otimes 1$ and $A^-=1\otimes A^{\mathrm{op}}$ are sub bialgebras of $DA$, 
and the commutation relation
\begin{align*}
\sum (a_{(2)},b_{(1)})\, a^-_{(1)}b^+_{(2)}
=\sum (b_{(2)},a_{(1)})\, b^+_{(1)}a^-_{(2)}
\end{align*} 
is imposed for $a,b\in A$. Here $a^+=a\otimes 1$, $a^-=1\otimes a$, and 
we use the Sweedler notation 
$\Delta(a)=\sum a_{(1)}\otimes a_{(2)}$ for the coproduct.  
The canonical element of $DA=A\otimes A^{\rm op}$ considered as 
an element of a suitable completion of $A^+\otimes A^-\subset DA\otimes DA$
is called the universal $R$ matrix and is denoted by $\cR$.
It has the properties
\begin{align}
&\cR\ \Delta(x)=\Delta^{\mathrm{op}}(x)\ \cR\quad (x\in DA)\,,
\label{int-R}\\
&\bigl(\Delta\otimes\id\bigr)\cR=\cR_{1,3}\,\cR_{2,3}\,,
\quad
\bigl(\id\otimes\Delta\bigr)\cR=\cR_{1,3}\,\cR_{1,2}\,,
\label{copro-R}\\
&\cR_{1,2}\cR_{1,3}\cR_{2,3}=\cR_{2,3}\cR_{1,3}\cR_{1,2}\,,
\label{YBE}
\end{align}
where, as usual, the suffixes $i,j$ of $\cR_{i,j}$ 
stand for the tensor components, e.g., $\cR_{1,2}=\cR\otimes 1$. 

The bialgebra $A=\E_{\gge}^\perp$ has a bialgebra pairing such that the non-trivial
pairings of the generators are given by
\begin{align*}
&(\ep_m,\ep_n)=\frac{1}{\kappa_1}\delta_{m,n}\,,\quad
(\hp_r,\hp_{s})=\frac{1}{r\kappa_r}\delta_{r,s}\,,\\
&(C,D)=(C^\perp,D^\perp)=q^{-1}\,.
\end{align*}
This pairing respects the $\Z^2$ grading
in the sense that $(a,b)=0$ unless $\deg a=\deg b$.
We identify $A^{\mathrm{op}}$ with $\E_{\lle}^\perp$ through the following
isomorphism of algebras, which is also an anti-isomorphism of coalgebras, 
\begin{align*}
e^\perp_n\mapsto f^\perp_{-n},\quad 
h^\perp_r\mapsto h^\perp_{-r},
\quad x\mapsto x^{-1}\quad (x=C,C^\perp,D,D^\perp).
\end{align*}
The Drinfeld double of $\E_{\gge}^\perp$ is then identified with
 $\E_{\gge}^\perp\otimes \E_{\lle}^\perp$. 
Its quotient by the relation $x\otimes 1=1\otimes x$ 
($x=C,C^\perp,D,D^\perp$) is isomorphic to the algebra $\E$ \cite{BS}.  

The universal $R$ matrix is an element of a certain completion of 
$\E_{\gge}^\perp\otimes \E_{\lle}^\perp\subset \E\otimes \E$, with the structure 
\begin{align}
\cR=\cR^{(0)}\cR^{(1)}\cR^{(2)}\,.
\label{univR}
\end{align}
Here 
\begin{align}
&\cR^{(1)}=\exp\Bigl(\sum_{r\ge1}r\kappa_r \hp_r\otimes \hp_{-r}\Bigr)\,, 
\label{R0}
\end{align}
and 
\begin{align}
\cR^{(2)}=1+\kappa_1\sum_{i\in\Z}\ep_i\otimes \fp_{-i}+\cdots\,
\label{R1}
\end{align}
is the canonical element of $\E_{\gge}^\perp\otimes\E_{\lle}^\perp$.
In \eqref{R1}, $\cdots$ stands for terms whose first component has homogeneous degree $\ge2$. 
The element $\cR^{(0)}$ is formally defined as 
\begin{align*}
&\cR^{(0)}= 
q^{-c\otimes d-d\otimes c-c^\perp\otimes d^\perp-d^\perp\otimes c^\perp}\,,
\\
&C=q^c,C^\perp=q^{c^\perp},D=q^d,D^\perp=q^{d^\perp}\,.  
\end{align*}
The expression $(\cR^{(0)})^{-1}\Delta^{\mathrm{op}}(x)\cR^{(0)}$ has a well defined meaning, 
and the intertwining property \eqref{int-R} should be understood as 
\begin{align*}
\cR^{(1)}\cR^{(2)}\ \Delta(x)=
\left((\cR^{(0)})^{-1}\Delta^{\mathrm{op}}(x)\cR^{(0)}\right)\
\cR^{(1)}\cR^{(2)} \quad (x\in \E).
\end{align*}
The element $\cR^{(0)}$ is well defined on tensor products
of representations which are principally graded and on which 
$c$ acts as $0$. This is the case for all representations considered in this paper.

\subsection{Fock representations}\label{rep sec}
Let $V$ be an $\E'$ module, and let $L,K\in\C^\times$. 
We say that $V$ has  {\it level} $(L,K)$ if 
the central element $C$ acts as the scalar $L$ and $C^\perp$ as $K$.
In this paper we consider only modules of level $(1,K)$. 
Then the operators $h_r$ are mutually commutative on $V$. 
We say that $V$ is {\it quasi-finite} if 
it is graded by the principal degree, $V=\oplus_{n\in\Z} V_n$,  
and  $\dim V_n<\infty$ for all $n$.  
We say it is bounded if $V_n=0$ for $n\ll 0$. 
For an $\E'$ module $V$ and $u\in\C^\times$, 
we denote by $V(u)$ the pullback of $V$ by the automorphism 
\begin{align*}
&s_u: e(z)\mapsto e(z/u),
\quad f(z)\mapsto f(z/u),\quad \psi^\pm(z)\mapsto \psi^\pm(z/u),
\quad C\mapsto C\,,\quad C^\perp\mapsto C^\perp\,.
\end{align*}

Let $\phi(z)$ be a rational function such that 
$\phi(z)$ is regular at $z=0,\infty$ and $\phi(0)\phi(\infty)=1$.
We say that $V$ is a lowest weight module with lowest weight 
$\phi(z)$ if it is generated by a vector $v$ 
which satisfies 
\begin{align*}
f(z)v=0,\quad \psi^{\pm}(z)v=\phi^{\pm}(z)v\,.
\end{align*}
Here $\phi^\pm(z)$ means the expansion of $\phi(z)$ at $z^{\mp1}=0$. 
For each such $\phi(z)$, there exists a unique irreducible lowest weight module 
$L_{\phi(z)}$ with lowest weight $\phi(z)$. 
Assigning degree $0$ to $v$ we have the principal grading 
$L_{\phi(z)}=\oplus_{n=0}^\infty (L_{\phi(z)})_n$, 
and $L_{\phi(z)}$ is quasi-finite \cite{M}. 

\medskip

The most basic lowest weight $\E'$ module is the Fock module. 
For $u\in\C^\times$, the Fock module $\F(u)$ is defined to be 
the irreducible lowest weight $\E'$ module with level $(1,q)$ and lowest weight 
\begin{align}
\phi(u,z)=\frac{q^{-1}-q u/z}{1-u/z}\,.
\label{Fock-wt}
\end{align}
As a vector space, $\F(u)$ has
a basis $\{\ket\lambda\}_{\lambda\in\cP}$ labeled by all partitions.

We use the following convention for partitions. A partition is a sequence of 
non-negative 
integers $\la=(\la_1,\la_2,\cdots)$ such that $\la_i\ge\la_{i+1}$ for all 
$i\ge1$, and $\la_i=0$ for $i$ large enough.
In particular, we write $\emptyset=(0,0,0,\dots)$.
The set of all partitions is denoted by $\cP$. 
The dual partition $\la'$ is given by $\la'_i=\sharp\{j\ |\ \la_j\geq i\}$.
We set $|\la|=\sum_{j\ge1}\la_j$ for $\la\in\cP$. 
For $j\ge1$ and $\lambda\in\cP$ we write
$\lambda+\mathbf{1}_j=(\la_1,\la_2,\cdots,\la_j+1,\cdots)$. 

We call a pair of natural numbers $(x,y)$ {\it convex corner} of $\la$ 
if $\la'_{y+1}<\la'_y=x$,
and {\it concave corner} of $\la$ 
if $\la'_y=x-1$ and in addition  $y=1$ or $\la'_{y-1}>x-1$.
We denote by $CC(\la)$ and $CV(\la)$ the set of concave and 
convex corners of $\la$, respectively.

Then the action of the generators is given as follows \cite{FT}:
\begin{align*}
\bra{\la+{\bf 1}_j}e(z) \ket{\la}
=
\prod_{s=1}^{j-1}  \psi(q_1^{\la_s-\la_j-1}q_3^{s-j})
\prod_{s=1}^{j-1} \psi(q_1^{\la_j-\la_s}q_3^{j-s})
\ \cdot \delta(q_1^{\la_j}q_3^{j-1}u/z)\,,
\end{align*}
\begin{align*}
\bra{\la}f(z) \ket{\la+{\bf 1}_j}
=\frac{q-q^{-1}}{\kappa_1}
\prod_{s=j+1}^{\ell(\lambda)} \psi(q_1^{\la_s-\la_j-1}q_3^{s-j})
\prod_{s=j+1}^{\ell(\lambda)+1}\psi(q_1^{\la_j-\la_s}q_3^{j-s})
\ \cdot \delta(q_1^{\la_j}q_3^{j-1}u/z)\,,
\end{align*}
\begin{align*}
\bra{\la}\psi^{\pm}(z) \ket{\la}=
\hspace{-10pt}\prod_{(i,j)\in\ CV(\la)}
\hspace{-6pt}\psi(q_3^iq_1^jq_2u/z)
\hspace{-10pt}\prod_{(i,j)\in\ CC(\la)} 
\hspace{-6pt}\psi(q_3^iq_1^jq_2^2u/z)^{-1}\,.
\end{align*}
In the above, we set $\psi(z)=(q-q^{-1}z)/(1-z)$ and assume that $\la,\la+\one_j\in\cP$. 
In all other cases the matrix elements are defined to be zero.
In terms of the generators $h_r$, we have for $r\in\Z\backslash\{0\}$
\begin{align}
h_r\ket{\emptyset}=\gamma_r\ket{\emptyset},
\quad \gamma_r=\frac{1-q_2^r}{r\kappa_r}u^r \,.
\label{h-eig}
\end{align}

\medskip

The generators $\hp_r$ act as a Heisenberg algebra on $\F(u)$,  
\begin{align}
[\hp_r,\hp_s]=\frac{q^r-q^{-r}}{r\kappa_r}\delta_{r+s,0}
\quad (r,s\in\Z\backslash\{0\})
\,, 
\label{Heis}
\end{align}
and $\F(u)$ is an irreducible module over this Heisenberg algebra. 
The generators $\ep(z),\fp(z)$ act by vertex operators, 
\begin{align}
&\ep(z)=\frac{1-q_2}{\kappa_1}\,
u \,
\exp\left(\sum_{r=1}^\infty \frac{\kappa_r}{1-q_2^r}\,\hp_{-r}z^r\right)
\exp\left(\sum_{r=1}^\infty \frac{q^r\kappa_r}{1-q_2^r}\,\hp_{r} z^{-r}\right)\,,
\label{VO1}\\
&\fp(z)= \frac{1-q_2^{-1}}{\kappa_1} u^{-1}\,
\exp\left(-\sum_{r=1}^\infty \frac{q^r\kappa_r}{1-q_2^r}\,\hp_{-r} z^r\right)
\exp\left(-\sum_{r=1}^\infty \frac{q^{2r}\kappa_r}{1-q_2^r}\,\hp_{r} z^{-r}\right)\,.
\label{VO2}
\end{align}

\section{Shuffle algebras}\label{shuffle sec}

It is known that the algebra 
$\E_{>}=\langle e_n\ (n\in\Z)\rangle$ 
has a presentation in terms of 
certain algebra of rational functions called the shuffle algebra. 
In this section we introduce an extension 
of the shuffle algebra which gives a functional realization of the Fock 
modules.

\subsection{Algebra $Sh_{0}$}
First, let us recall the definition of the shuffle algebra 
\begin{align*}
Sh_{0}=\oplus_{n=0}^\infty Sh_{0,n}. 
\end{align*}
 
We set $Sh_{0,0}=\C$, $Sh_{0,1}=\C[x^{\pm1}]$. 
For $n\ge2$, $Sh_{0,n}$ is the space of all symmetric rational functions of the form 
\begin{align*}
F(x_1,\cdots,x_n)=\frac{f(x_1,\cdots,x_n)}{\prod_{1\le i<j\le n}(x_i-x_j)^2}, 
\quad f(x_1,\cdots,x_n)\in \C[x_1^{\pm1},\cdots,x_n^{\pm1}]^{\GS_n}, 
\end{align*}
satisfying the {\it wheel condition}
\begin{align}
f(x_1,\cdots,x_n)=0
\quad \text{if $(x_1,x_2,x_3)=(x,q_1x,q_1q_2x)$ or $(x,q_2x,q_1q_2x)$}. 
\label{wheel1}
\end{align}
Note that since $f(x_1,\cdots,x_n)$ is symmetric, from \eqref{wheel1}, we also have $f(x_1,\cdots,x_n)=0$ if
$(x_1,x_2,x_3)=(x,q_ix,q_iq_jx)$ or $(x_1,x_2,x_3)=(x,q_iq_jx,q_ix)$ for $i,j\in\{1,2,3\}$, $i\neq j$.

We define the shuffle product $\ast$ of elements $F\in Sh_{0,m}$ and $G\in Sh_{0,n}$  by the formula 
\begin{align*}
(F\ast G)(x_1,\cdots,x_{m+n})=
\Sym\Bigl[
F(x_1,\cdots,x_m)G(x_{m+1},\cdots,x_{m+n})\prod_{\genfrac{}{}{0pt}{}{1\le i\le m}{ 1\le j\le n}}\omega(x_{m+j},x_i)\Bigr]\,,
\end{align*}
where
\begin{align*}
\omega(x,y)=\frac{(x-q_1y)(x-q_2y)(x-q_3y)}{(x-y)^3}
=\frac{g(x,y)}{(x-y)^3}\,.
\end{align*}
Here and after we set 
\begin{align*}
\Sym f(x_1,\cdots,x_n) =\frac{1}{n!}\sum_{\sigma\in\GS_n} f(x_{\sigma(1)},\cdots,x_{\sigma{(n)}})\,.
\end{align*}
It is easy to check that the space $Sh_{0}$ becomes 
an associative algebra under the product $\ast$. 
The following fact is known (see \cite{SV2}, \cite{FT}, \cite{Ng}). 

\begin{prop}\label{prop:sigma}
Algebra $Sh_{0}$ is generated by the subspace $Sh_{0,1}$.
There is an isomorphism of algebras $\sigma: \ \E_{>}\simeq Sh_{0}$ 
such that
\begin{align*}
 \sigma (e_i)= c_1 x^i\in  Sh_{0,1}, \qquad i\in\Z,
\end{align*}
where $c_1=q_3/((1-q_1)(1-q_3))$. 
\end{prop}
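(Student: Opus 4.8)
The plan is to realize $\sigma$ as an algebra isomorphism by building it from the generators, and then to establish surjectivity (equivalently the generation claim) and injectivity separately. First I would define $\sigma$ on generators by $\sigma(e_i)=c_1x^i\in Sh_{0,1}$ and verify that the defining relations of $\E_>$ among the currents are respected. Granting the standard presentation of $\E_>$ by the quadratic relation $g(z,w)e(z)e(w)+g(w,z)e(w)e(z)=0$ together with the cubic Serre relation, it suffices to check these two. Expanding $\sigma(e(z))*\sigma(e(w))$ into modes, the quadratic relation collapses to the elementary identity $g(z,w)\omega(w,z)+g(w,z)\omega(z,w)=0$, which holds because $\omega(x,y)=g(x,y)/(x-y)^3$ has the antisymmetric denominator $(x-y)^3$ while $g(z,w)g(w,z)$ is symmetric. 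The Serre relation is reflected by the wheel condition \eqref{wheel1}: every factor $\omega(x_j,x_i)$ appearing in a shuffle product satisfies $\omega(q_ay,y)=0$ since $g(q_ay,y)=0$ for $a=1,2,3$, so each shuffle product of degree-one elements automatically lands in $Sh_0$ and the image of the Serre combination vanishes. The precise value of $c_1$ is not forced by the bare algebra structure (any nonzero scalar rescales generators), and is fixed only for compatibility with the coproduct and the pairing introduced later.

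Next I would prove that $Sh_0$ is generated by $Sh_{0,1}$, which gives surjectivity of $\sigma$ since its image is a subalgebra containing $Sh_{0,1}$. The key tool is a factorization map $Sh_{0,n}\to Sh_{0,n-1}\otimes Sh_{0,1}$ obtained by extracting the leading coefficient of a function as one variable tends to infinity; because $\omega(x,y)\to1$ in that limit, a shuffle product $G*x^k$ maps to $G\otimes x^k$ plus terms of lower degree in the last slot, so shuffle products are triangular for this map. Arguing by induction on $n$, I would match a given $F\in Sh_{0,n}$ against products $G*x^k$ with $G\in Sh_{0,n-1}$ supplied by the induction hypothesis, subtract, and repeat until the remainder drops to lower variable number; the poles of order $\le2$ along $x_i=x_j$ and the wheel condition \eqref{wheel1} are preserved at each step, so the reduction stays inside $Sh_{0,n}$.

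For injectivity I would use the specialization maps $\rho_\lambda$ of Appendix \ref{gordon sec}, which restrict a function in $Sh_{0,n}$ to configurations where the variables are specialized to geometric strings indexed by a partition $\lambda$ and produce factorized one-variable expressions, the wheel condition guaranteeing that these restrictions are well defined. Computing $\rho_\lambda$ on the shuffle products of degree-one generators shows that they are triangular with respect to the dominance order on $\lambda$, which proves their linear independence. Combined with the spanning from the previous paragraph, this yields a basis of each $Sh_{0,n}$ whose cardinality in every bidegree matches $\dim(\E_>)_{n,d}$ read off from the PBW basis of the positive half of quantum toroidal $\gl_1$; since $\sigma$ is graded and surjective, equality of these dimensions forces $\Ker\sigma=0$.

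The homomorphism property is a short computation and the dimension bookkeeping is routine once a basis is in hand. The real content, and the step I expect to be the main obstacle, is the combined generation and independence result: controlling the interaction between the second-order poles and the wheel vanishing while peeling off one variable at a time is delicate, and it is precisely here that the factorization map and the Gordon filtration of Appendix \ref{gordon sec} do the essential work.
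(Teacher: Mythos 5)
Note first that the paper does not prove Proposition \ref{prop:sigma} at all: it is stated as a known fact with references to \cite{SV2}, \cite{FT}, \cite{Ng}, so there is no internal argument to compare against. Your outline is, in spirit, the route of \cite{Ng} (specialization maps bounding graded dimensions, compared against the PBW count for $\E_>$ coming from \cite{BS}, \cite{SV2}), and your remark that $c_1$ is a normalization not forced by the algebra structure is correct. But two of your steps are genuinely gapped.

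First, the Serre relation. The wheel condition does not make ``the image of the Serre combination vanish''; its role is different. It guarantees that shuffle products of one-variable functions land in $Sh_0$: at a wheel specialization $(x,q_1x,q_1q_2x)$ the vanishing factors among the $\omega(x_b,x_a)$ are exactly the three cyclic orientations $\omega(x_2,x_1)$, $\omega(x_3,x_2)$, $\omega(x_1,x_3)$, and since no linear order realizes the complementary directed cycle, every summand of the symmetrization contains at least one vanishing factor. The vanishing of $\sigma$ applied to $\mathop{\mathrm{Sym}}_{z_1,z_2,z_3}z_2z_3^{-1}[e(z_1),[e(z_2),e(z_3)]]$ is a separate rational-function identity, namely
\begin{align*}
\mathop{\mathrm{Sym}}_{x_1,x_2,x_3} \frac{x_2}{x_3}\bigl(\omega_{3,1}\omega_{3,2}\omega_{2,1}
-\omega_{3,1}\omega_{2,3}\omega_{2,1}-\omega_{1,3}\omega_{1,2}\omega_{3,2}
+\omega_{1,2}\omega_{1,3}\omega_{2,3}\bigr)=0, \qquad \omega_{i,j}=\omega(x_i,x_j),
\end{align*}
which is exactly the identity the paper invokes when verifying the cubic Serre relations for the left action on $Sh_1(u)$; it holds for all rational functions and has nothing to do with the wheel locus. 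Your attribution would not survive being written out.

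Second, and more seriously, the surjectivity step. The ``peel off one variable by leading coefficient at infinity'' induction is not a proof as stated: elements of $Sh_{0,n}$ are symmetric Laurent functions, so degrees in the peeled variable are unbounded in both directions and there is no measure guaranteeing your subtract-and-repeat process terminates; moreover the symmetrization in $G\ast x^k$ mixes which variable is sent to infinity, so the claimed triangularity fails term by term (the summands where $x_n$ is plugged into $G$ contribute in the same degrees). Generation of $Sh_0$ in degree one is precisely the hard content of \cite{Ng}, and it is proved there not by such a peeling but by bounding $\dim (Sh_{0,n})_d$ from above via specialization maps and matching against the PBW dimensions of $\E_>$ — the same dimension comparison you reserve for injectivity. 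As your scheme stands, the dimension count is used twice while the spanning statement that feeds it is unproven, and injectivity of $\E_>\to Sh_0$ is itself the nontrivial input (faithfulness, from the Hall-algebra realization of \cite{BS}, \cite{SV2}), not something recovered for free from triangularity of $\rho_\la$ on products. Finally, the maps $\rho_\la$ of Appendix \ref{gordon sec} are tailored to $Sh_1(u)$ (the first string anchored at $y_1=u$, the $i$-th at $q_3^{i-1}u$); for $Sh_0$ you need the free-anchor variant of \cite{Ng}, which should be stated rather than borrowed by reference from the $Sh_1(u)$ appendix.
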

Under the isomorphism above, 
the graded component $(\E_{>})_{n,d}$ 
corresponds to the subspace $(Sh_{0,n})_d$ of $Sh_{0,n}$ 
consisting of functions of homogeneous degree $d\in\Z$.

\subsection{The bimodule $Sh_{1}(u)$}

We fix $u\in \C^{\times}$, and consider a linear space 
\begin{align*}
Sh_{1}(u)=\oplus_{n=0}^\infty Sh_{1,n}(u). 
\end{align*}
We set $Sh_{1,0}(u)=\C$, $Sh_{1,1}(u)=(x-u)^{-1}\C[x^{\pm1}]$. 
For $n\ge 2$, 
$Sh_{1,n}(u)$ is the space of all rational functions of the form
\begin{align*}
&F(x_1,\cdots,x_n)
=\frac{f(x_1,\cdots,x_n)}
{\prod_{1\le i<j\le n}(x_i-x_j)^2
\prod_{i=1}^n(x_i-u)}\,,
\\
&f(x_1,\cdots,x_n)
\in \C[x_1^{\pm1},\cdots,x_n^{\pm1}]^{\GS_n}\,,
\end{align*}
such that they satisfy both the wheel condition \eqref{wheel1} 
and an additional wheel condition 
\begin{align}
&f(u,q_2u,x_3,\cdots,x_n)=0\,.
\label{wheel2}
\end{align}
In what follows, we denote the element $1\in Sh_{1,0}(u)=\C$ by $\one$. 

For $F\in Sh_{0,m}$ and $G\in Sh_{1,n}(u)$, we set 
\begin{align*}
&(F\ast G)(x_1,\cdots,x_{m+n})= 
\Sym\Bigl[
F(x_1,\cdots,x_m)G(x_{m+1},\cdots,x_{m+n})
\hspace{-7pt}\prod_{\genfrac{}{}{0pt}{}{1\le i\le m}{1\le j\le n}}\omega(x_{m+j},x_i)
\prod_{i=1}^m\phi(u,x_i)\Bigr]\,,
\\
&(G\ast F)(x_1,\cdots,x_{m+n})= 
\Sym\Bigl[
G(x_{m+1},\cdots,x_{m+n})F(x_1,\cdots,x_m)\hspace{-7pt}
\prod_{\genfrac{}{}{0pt}{}{1\le i\le m}{1\le j\le n}}\omega(x_i,x_{m+j})
\Bigr]\,,
\end{align*}
where $\phi(u,z)$ is given in \eqref{Fock-wt}.
With this definition, by Proposition \ref{prop:sigma}, 
 $Sh_{1}(u)$ is an $\E_{>}$ bimodule. Later we will prove that $Sh_{1}(u)$ is a cyclic bimodule, and
$\one$ is a cyclic vector (see Corollary \ref{cyclic}). 

We upgrade the left $\E_{>}$ action to 
make $Sh_{1}(u)$ a left $\E'$ module of level $(1,q)$. 
\begin{prop}
The following formula defines a left action of $\E'$ on 
 $Sh_{1}(u)$. 
\begin{align}
&e_k \cdot F= c_1 x^k*F\,,\notag\\
&h_r \cdot F=\bigl(-\frac{1}{r}\sum_{i=1}^n x_i^r
+\gamma_r  
\bigr)F\,, \label{h act}
\\
&f_k \cdot F
=c_2 n \Bigl(\Res_{z=0}+\Res_{z=\infty}\Bigr)
\frac{F(x_1,\cdots,x_{n-1},z)z^k}{\prod_{i=1}^{n-1}\omega(z,x_i)}
\frac{dz}{z}\,.\notag
\end{align}
Here  $F\in Sh_{1,n}(u)$,  
$k\in\Z$, $r\in\Z\backslash\{0\}$, $\gamma_r$ is defined in \eqref{h-eig}, 
$c_1=q_3/((1-q_1)(1-q_3))$ and $c_2=q_3^{-1}/(1-q_2)$.
This action commutes with the right action of $\E_{>}$.  
\end{prop}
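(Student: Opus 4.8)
The plan is to verify, in order, that (i) the three formulas give well-defined operators with values again in $Sh_{1}(u)$, (ii) they satisfy the defining relations of $\E'$, (iii) the resulting module has level $(1,q)$, and (iv) the action commutes with the right $\E_{>}$-action. Two ingredients are essentially free. The operator $e_k\cdot F=c_1x^k\ast F$ is exactly the left bimodule action furnished by Proposition \ref{prop:sigma}, so the relation $g(z,w)e(z)e(w)+g(w,z)e(w)e(z)=0$ and the $e$-Serre relation hold automatically, and its commutation with the right action is associativity of the shuffle product. The operator $h_r\cdot F$ is multiplication by the symmetric Laurent polynomial $-\tfrac1r\sum_i x_i^r+\gamma_r$, so it manifestly preserves $Sh_{1,n}(u)$ (it disturbs neither the pole prefactor nor the wheel conditions \eqref{wheel1}, \eqref{wheel2}) and commutes with itself, which is relation \eqref{hh} at $C=1$. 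Thus the substance lies with $f_k$ and with the relations coupling $f$ to $e$ and to $h$.

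First I would show $f_kF\in Sh_{1,n-1}(u)$. Writing $\omega(z,x_i)^{-1}=(z-x_i)^3/g(z,x_i)$ and inserting the denominator of $F$, the cubes $(z-x_i)^3$ cancel the squares $(x_i-z)^2$, so the integrand is a rational $1$-form in $z$ whose numerator is $f(x,z)\prod_i(z-x_i)$ and whose denominator is $\prod_{i<j}(x_i-x_j)^2\,(z-u)\prod_i(x_i-u)\,\prod_i g(z,x_i)$; this is symmetric in $x_1,\dots,x_{n-1}$, hence so is $f_kF$. By the residue theorem $\Res_{z=0}+\Res_{z=\infty}=-\sum\Res$ over the finite nonzero poles $z=u$ and $z=q_jx_i$, and reading off the $x$-poles of these residues shows $f_kF$ has the prescribed shape with denominator $\prod_{i<j}(x_i-x_j)^2\prod_i(x_i-u)$. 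The delicate point is the two wheel conditions: one checks that the residue at $z=q_jx_i$ is governed by the wheel condition \eqref{wheel1} of $F$ (the factor $g(z,x_i)$ meets the wheel locus exactly there), and that \eqref{wheel2} survives the residue at $z=u$ together with the prefactor $(z-u)(x_i-u)$. This is where the special shape of $Sh_{1}(u)$ enters.

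For the relations, the mixed ones \eqref{he}, \eqref{hf} are immediate: commuting multiplication by $-\tfrac1r\sum_i x_i^r$ past the shuffle by $x^n$ leaves the single residual term $-\tfrac1r x^{n+r}\ast(-)$, giving $[h_r,e_n]=-\tfrac1r e_{n+r}$, while pushing it through the residue merely replaces $z^k$ by $z^{k+r}$, giving $[h_r,f_k]=\tfrac1r f_{k+r}$; both are the $C=1$ specializations (the exponents $C^{(-r\mp|r|)/2}$ equal $1$). The crux is the relation $[e(z),f(w)]=\tfrac1{\kappa_1}\bigl(\delta(w/z)\psi^+(w)-\delta(z/w)\psi^-(z)\bigr)$. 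Here I would compute the commutator directly: $f_k$ contributes a residue variable and $e_m$ a shuffled variable, and in the difference everything cancels except the contact terms where these two variables collide (the poles of $\omega$) together with the boundary term at $z=u$. Using $c_1c_2=\kappa_1^{-1}$ these assemble into the stated $\delta$-terms, whose coefficient is multiplication by $\phi(u,z)\prod_i\omega(x_i,z)/\omega(z,x_i)$; the identities $(\Cp)^{-1}\exp(\sum_{r\ge1}\kappa_r\gamma_r z^{-r})=\phi^+(u,z)$ and $\exp(-\sum_i\sum_{r\ge1}\tfrac{\kappa_r}{r}(x_i/z)^r)=\prod_i\omega(x_i,z)/\omega(z,x_i)$ then identify this coefficient with the action of $\psi^\pm(z)=(\Cp)^{\mp1}\exp(\sum_r\kappa_r h_{\pm r}z^{\mp r})$ read off from the $h_r$-formula. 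Finally the relation $g(w,z)f(z)f(w)+g(z,w)f(w)f(z)=0$ and the $f$-Serre relation follow from a double-residue computation dual to the $e$-side, the factors $g(z,w)$, $g(w,z)$ supplying the antisymmetry.

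The level is then read off: $C=1$ from the specialization of \eqref{he}, \eqref{hf} just used, and $\Cp=q$ by letting $z\to\infty$ in the $\psi^+$-action, where $\phi(u,z)\to q^{-1}$ and $\omega(x_i,z)/\omega(z,x_i)\to1$, so that $\psi^+(\infty)=q^{-1}=(\Cp)^{-1}$. The commutation with the right $\E_{>}$-action is checked on the generators $e_k$ and $f_k$: for $e_k$ it is associativity, and for $f_k$ it follows from a residue computation in which the $\omega$-factors carried by the right shuffle cancel the denominator of the $f_k$-formula, so that $\Res_{z=0}+\Res_{z=\infty}$ annihilates the leftover Laurent contributions. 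The main obstacle is the $[e,f]$ relation: organizing the contact residues at the colliding variable and at $z=u$ and checking that their coefficients reconstitute exactly $\kappa_1^{-1}\psi^{\pm}$ and nothing else; tightly linked to this is the verification that $f_k$ preserves the auxiliary wheel condition \eqref{wheel2}, the one feature distinguishing $Sh_{1}(u)$ from the plain shuffle algebra $Sh_{0}$.
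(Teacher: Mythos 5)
Your core verification follows the paper's own proof almost verbatim: the paper also proceeds by direct computation, and its one displayed step is exactly your crux, namely that $[e_k,f_l]F$ collapses to
$-\kappa_1^{-1}\bigl(\Res_{z=0}+\Res_{z=\infty}\bigr)\,\phi(u,z)\prod_{i}\frac{\omega(x_i,z)}{\omega(z,x_i)}\,z^{k+l}\,\frac{dz}{z}\cdot F$,
identified with $\frac{1}{\kappa_1}(\psi^+-\psi^-)$ through precisely the two expansion identities you quote (the paper states them at $z^{\pm1}\to\infty$), together with $c_1c_2=\kappa_1^{-1}$; for the cubic Serre relations the paper gives an explicit symmetrization identity in the $\omega_{i,j}$ where you invoke a dual double-residue computation, and your added bookkeeping on the shape of $f_k F$ (numerator $f(x,z)\prod_i(z-x_i)$ over $\prod_{i<j}(x_i-x_j)^2(z-u)\prod_i(x_i-u)\prod_i g(z,x_i)$, residues at $z=u$ and $z=q_ax_i$, wheel conditions \eqref{wheel1}, \eqref{wheel2}) is correct and fills in what the paper leaves as ``checked similarly.''

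However, your final step contains a genuine error. The mechanism you propose for commutation of the left $f_k$-action with the right $\E_>$-action --- that ``the $\omega$-factors carried by the right shuffle cancel the denominator of the $f_k$-formula'' --- is false: the right shuffle attaches $\omega(x_a,z)$ while the $f_k$-denominator carries $\omega(z,x_a)$, so every term in which the residue variable $z$ differs from the right-shuffled variable retains the nontrivial kernel $\prod_a\omega(x_a,z)/\omega(z,x_a)$ (the very kernel that produces $\psi^{\pm}$); only the single diagonal term is a pure Laurent contribution killed by $\Res_{z=0}+\Res_{z=\infty}$. Indeed strict commutation cannot hold: if all $e_k$ and $f_l$ commuted with the right action, so would $[e_0,f_1]=\kappa_1^{-1}\psi^+_1$, i.e.\ multiplication by $q^{-1}\bigl(\gamma_1-\sum_i x_i\bigr)$, yet already $h_r\bigl(\one\ast x^l\bigr)=-\tfrac1r x^{l+r}+\gamma_r x^l\neq \gamma_r x^l=(h_r\one)\ast x^l$. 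A direct check at $n=1$ confirms this: the identity $\bigl(\Res_{z=0}+\Res_{z=\infty}\bigr)\frac{\omega(x,z)}{\omega(z,x)}\frac{z^{K-1}}{z-u}\,dz=-u^{K-1}$, which is exactly what your cancellation would require, holds for $K=0,1$ but fails for $K=2$ (the residues at $z=q_ax$ no longer conspire). What is true, and what the paper's logic actually uses, is weaker: the two $\E_>$-shuffle actions commute (associativity, your $e_k$ case), while for $h_r$ and $f_k$ one has Leibniz-type formulas whose correction terms lie in the right ideal $\J_0$ of \eqref{J0} --- e.g.\ $h_r(G\ast F)=(h_rG)\ast F+G\ast\bigl(-\tfrac1r(\textstyle\sum_i x_i^r)F\bigr)$, and for $f_k$ the leftover terms are right multiples by elements of the augmentation ideal of $Sh_0$. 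This $\J_0$-stability is what makes the quotient $Sh_1(u)/\J_0$ a left $\E'$-module, and the last sentence of the proposition must be read in this sense; your proof should replace the cancellation claim by this argument.
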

\begin{proof}
The proof is done by a direct computation. 
As an example we sketch the verification of the relation
$[e(z),f(w)]=(1/\kappa_1)\delta(z/w)\bigl(\psi^+(z)-\psi^-(z)\bigr)$.
Let $F\in Sh_{1,n}(u)$, $k,l\in\Z$. From the above definition we deduce that 
\begin{align*}
&[e_k,f_l]F=-\frac{1}{\kappa_1} 
\times
\Bigl(\Res_{z=0}+\Res_{z=\infty}\Bigr)\prod_{i=1}^n\frac{\omega(x_i,z)}{\omega(z,x_i)}\cdot \phi(u,z)
\cdot z^{k+l}\frac{dz}{z}\times F
\,.
\end{align*}
Comparing this with the expansions at $z^{\pm1}\to\infty$
\begin{align*}
\frac{\omega(x,z)}{\omega(z,x)}=\exp\Bigl(-\sum_{\pm r>0}\frac{1}{r}\kappa_r x^r z^{-r}\Bigr)\,,
\quad
\phi(u,z)=q^{\mp1}\exp\Bigl(\sum_{\pm r>0}\kappa_r\gamma_r z^{-r}\Bigr)\,,
\end{align*}
we obtain the desired relation. The rest of the relations can be checked similarly. 
In particular, the cubic Serre relations follow from the identity 
\begin{align*}
\mathop{\mathrm{Sym}}_{x_1,x_2,x_3} \frac{x_2}{x_3}\bigl(\omega_{3,1}\omega_{3,2}\omega_{2,1}
-\omega_{3,1}\omega_{2,3}\omega_{2,1}-\omega_{1,3}\omega_{1,2}\omega_{3,2}
+\omega_{1,2}\omega_{1,3}\omega_{2,3}\bigr)=0\,,
\end{align*}
where $\omega_{i,j}=\omega(x_i,x_j)$.
\end{proof}

\subsection{Functional realization of Fock module}
We consider the following left $\E'$ submodule of $Sh_{1}(u)$, 
\begin{align}
\J_0=\mathrm{Span}_{\C}
\{G\ast F\mid G\in Sh_{1}(u), F\in Sh_{0,n}, n\ge 1\}\subset Sh_{1}(u)\,.
\label{J0}
\end{align}
The following gives a realization of the Fock module 
as a quotient of a space of rational functions. 
\begin{prop}\label{fock isom}
 We have the isomorphism of left $\E'$ modules $Sh_{1}(u)/\J_0\simeq \F(u)$.
\end{prop}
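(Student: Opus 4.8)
The plan is to realize $\bar{\one}$ (the image of $\one$) as a lowest weight vector and then pin down the size of the quotient by a graded dimension count. First I would read off, directly from the formulas for the left action, that $\one$ is a lowest weight vector of weight $\phi(u,z)$ already in $Sh_{1}(u)$: since $\one\in Sh_{1,0}(u)$ involves no variables, the residue formula gives $f_k\cdot\one=0$, while $h_r\cdot\one=\gamma_r\one$ together with $C^\perp$ acting as $q$ reproduces $\psi^\pm(z)\one=\phi^\pm(u,z)\one$ by exactly the expansion of $\phi(u,z)$ used in the proof of the preceding proposition. Because the left $\E'$ action commutes with the right $\E_{>}$ action, and $\J_0=Sh_{1}(u)\ast Sh_{0}'$ is the image of the right action of the augmentation ideal $Sh_{0}'=\oplus_{n\ge1}Sh_{0,n}$, the space $\J_0$ is a left $\E'$ submodule; hence $\bar{\one}$ is a lowest weight vector of weight $\phi(u,z)$ in $Sh_{1}(u)/\J_0$. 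Moreover $\J_{0,0}=0$ since every $G\ast F$ with $F\in Sh_{0}'$ has at least one variable, so $\bar{\one}\neq0$. Under the left action $e_k\cdot F=c_1x^k\ast F$ raises the number of variables by one and has principal degree $1$, so the principal grading corresponds to the number of variables, and I will compare $Sh_{1,n}(u)/\J_{0,n}$ with $\F(u)_n=\mathrm{Span}\{\ket\la\mid|\la|=n\}$.

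Next I would obtain a surjection onto $\F(u)$. The point is cyclicity: $Sh_{1}(u)/\J_0=\E_{>}\cdot\bar{\one}$, since modulo $\J_0$ the right action of $Sh_{0}'$ is zero and only left multiplication survives (this is also the content of the cyclic bimodule statement, Corollary \ref{cyclic}). Thus $Sh_{1}(u)/\J_0$ is a cyclic lowest weight module of weight $\phi(u,z)$, so it is a quotient of the corresponding Verma-type module; as $\bar{\one}\neq0$ the quotient is nontrivial, and by uniqueness of the irreducible lowest weight module $\F(u)=L_{\phi(u,z)}$ there is a surjective homomorphism of $\E'$ modules $\pi:Sh_{1}(u)/\J_0\twoheadrightarrow\F(u)$. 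In particular $\dim\bigl(Sh_{1,n}(u)/\J_{0,n}\bigr)\ge\dim\F(u)_n=\#\{\la\in\cP\mid|\la|=n\}$ for every $n$.

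The remaining and decisive task is the matching upper bound, for which I would invoke the Gordon filtration of Appendix \ref{gordon sec}. For each partition $\la$ with $|\la|=n$ one attaches an evaluation functional $\rho_\la$ obtained by specializing the variables to the cluster of spectral points $q_1^{\la_j}q_3^{\,j-1}u$ dictated by the matrix elements of $e(z)$, and extracting the leading coefficient; the two wheel conditions \eqref{wheel1}, \eqref{wheel2}, the double poles at $x_i=x_j$ and the simple poles at $x_i=u$ are precisely what make these specializations finite and consistent. The filtration exhibits a spanning set of $Sh_{1,n}(u)/\J_{0,n}$ indexed by partitions of $n$, each member lying in $\E_{>}\cdot\bar{\one}$ (which also re-proves cyclicity), giving $\dim\bigl(Sh_{1,n}(u)/\J_{0,n}\bigr)\le\#\{\la\in\cP\mid|\la|=n\}$, while the maps $\rho_\la$, shown to descend to the quotient and to be triangular with nonvanishing diagonal with respect to an appropriate order on partitions, confirm this spanning set is a basis. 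Comparing with the lower bound forces equality of graded dimensions, and hence $\pi$ is an isomorphism.

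The hard part will be the Gordon filtration step: constructing the evaluation maps $\rho_\la$ so that they are compatible with the pole structure and both wheel conditions, verifying that they annihilate $\J_0$, and establishing the triangularity and nonvanishing of their values over all partitions of $n$. This is a delicate combinatorial analysis over the boxes of $\la$, and it is where the explicit form of $\omega$ and of $\phi(u,z)$ genuinely enters; by contrast, the identification of $\bar{\one}$ as a lowest weight vector, the submodule property of $\J_0$, and the surjection $\pi$ are formal consequences of the commuting bimodule actions and of the universal property of lowest weight modules.
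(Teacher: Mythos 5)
Your proposal is correct and follows essentially the same route as the paper: identify $\bar{\one}$ as a nonzero lowest weight vector of weight $\phi(u,z)$ in the quotient and then match the graded dimensions with those of $\F(u)$ via the Gordon filtration of Appendix \ref{gordon sec} (Corollary \ref{Gordon-cor}), which is exactly the paper's short proof with the implicit details (the surjection onto $\F(u)$, the nonvanishing of $\bar{\one}$) spelled out. The only loose point is your early claim that $Sh_{1}(u)/\J_0=\E_{>}\cdot\bar{\one}$ because ``only left multiplication survives''---this is not justified at that stage---but it is harmless, since one can instead run the dimension comparison with the submodule generated by $\bar{\one}$ (or use your later observation that the Gordon spanning elements, being of the form $F\ast\one$, lie in $\E_{>}\cdot\bar{\one}$), and the conclusion is unchanged.
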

\begin{proof}
The module  $Sh_{1}(u)/\J_0$ contains the lowest weight vector
$\one$ with the same lowest weight \eqref{Fock-wt} as the Fock module. 
Hence, in order to prove the isomorphism, it is sufficient to show that 
each of its graded component has the same dimension as that of the Fock
module. We show this in Appendix \ref{gordon sec}, Corollary \ref{Gordon-cor}.
\end{proof}

Therefore we have the canonical projection map $\pi$.
\begin{cor}
There exists a unique surjective homomorphism of left $\E'$ modules
\begin{align}\label{pi}
\pi:\,  Sh_{1}(u) \to \F(u), \qquad \one\to \ket{\emptyset} 
\end{align}
which factorizes through $Sh_{1}(u) /J_0$.
\end{cor}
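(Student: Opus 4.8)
The plan is to read this corollary off directly from Proposition \ref{fock isom}, of which it is essentially a normalized reformulation. First I would record that, by its very definition \eqref{J0}, $\J_0$ is a left $\E'$ submodule of $Sh_{1}(u)$, so that the canonical projection $q\colon Sh_{1}(u)\to Sh_{1}(u)/\J_0$ is a surjective homomorphism of left $\E'$ modules. Proposition \ref{fock isom} supplies an isomorphism $\bar\pi\colon Sh_{1}(u)/\J_0\xrightarrow{\sim}\F(u)$ of left $\E'$ modules, and I would simply set $\pi=\bar\pi\circ q$. Being a composite of a surjective $\E'$ homomorphism with an isomorphism, $\pi$ is a surjective homomorphism of left $\E'$ modules, and by construction it factors through $Sh_{1}(u)/\J_0$.

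Next I would verify the normalization $\pi(\one)=\ket{\emptyset}$. This is already built into the proof of Proposition \ref{fock isom}: the isomorphism there is obtained by matching the lowest weight vector $\one+\J_0$ of $Sh_{1}(u)/\J_0$ with the lowest weight vector $\ket{\emptyset}$ of $\F(u)$, both carrying the same lowest weight \eqref{Fock-wt}. Hence $\bar\pi(\one+\J_0)=\ket{\emptyset}$, and therefore $\pi(\one)=\ket{\emptyset}$.

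For uniqueness I would invoke cyclicity. The Fock module $\F(u)$ is generated as a left $\E'$ module by $\ket{\emptyset}$, so through $\bar\pi$ the quotient $Sh_{1}(u)/\J_0$ is generated by $\one+\J_0$. Consequently any $\E'$ homomorphism $Sh_{1}(u)/\J_0\to\F(u)$ that sends $\one+\J_0\mapsto\ket{\emptyset}$ is forced to send $x\cdot(\one+\J_0)\mapsto x\cdot\ket{\emptyset}$ for every $x\in\E'$, and is thereby uniquely determined. Thus any surjection $\pi$ with $\pi(\one)=\ket{\emptyset}$ that factors through $Sh_{1}(u)/\J_0$ must agree with $\bar\pi\circ q$, giving uniqueness.

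There is essentially no obstacle here, since all the genuine content has been absorbed into Proposition \ref{fock isom} and, through it, into the dimension count of Corollary \ref{Gordon-cor} in Appendix \ref{gordon sec}. The only point that deserves a moment's care is to confirm that the isomorphism of Proposition \ref{fock isom} is precisely the one normalized by $\one+\J_0\mapsto\ket{\emptyset}$, rather than a nonzero scalar multiple of it; this is immediate because a lowest weight module is cyclic on its lowest weight vector and its degree-zero component is one dimensional, so prescribing $\one\mapsto\ket{\emptyset}$ pins the isomorphism down uniquely.
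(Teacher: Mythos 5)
Your proposal is correct and follows exactly the route the paper intends: the corollary is stated without proof (``Therefore we have the canonical projection map $\pi$''), meaning precisely your composition $\pi=\bar\pi\circ q$ of the canonical quotient map with the isomorphism of Proposition \ref{fock isom}, normalized on the lowest weight vector. Your added details --- that $\J_0$ is a left $\E'$ submodule, that the normalization $\one\mapsto\ket{\emptyset}$ is pinned down by the one-dimensional degree-zero component, and that uniqueness follows from cyclicity of the quotient on $\one+\J_0$ (correctly invoked only for maps factoring through $Sh_1(u)/\J_0$, since $Sh_1(u)$ itself is not cyclic as a left module) --- are all accurate and simply make explicit what the paper leaves implicit.
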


\subsection{The subspace $N$}\label{L sec}
We have a short exact sequence of left $\E'$ modules
\begin{align*}
0\to \J_0 \to Sh_{1}(u) \to \F(u)\to 0. 
\end{align*}
In this section, we split this sequence in the category of
vector spaces. The reason for the choice of this particular splitting
will be clarified later (see \eqref{LL1}).

Define a linear map $\kappa:\ \F(u) \to Sh_{1}(u)$ by the requirements $\kappa(\ket{\emptyset})=\one$ and
\begin{align}\label{sh q com}
\kappa(h_{-r}^\perp (v))=h_{-r}^\perp \kappa(v) -q^r \kappa(v) h^\perp_{-r},  
\end{align}
for all $r>0$ and $v\in\F(u)$. Here we use the bimodule action
of $h_{-r}^\perp\in\E_>$.

Since $\F(u)$ is cyclic with respect to the
algebra generated by $h_{-r}^\perp$, $r>0$, the map $\kappa$
is uniquely defined.
We clearly have $\pi \kappa = id$ and in particular, $\kappa$ is  
injective. Let
\begin{align}\label{subspace N}
N=\kappa(\F(u))\subset Sh_{1}(u).
\end{align}
We clearly have a direct sum of vector spaces
\begin{align}\label{L plus}
Sh_{1}(u)=\J_0\oplus N. 
\end{align}

Now we have the following.
\begin{cor}\label{cyclic}
The space $Sh_{1}(u)$ is a cyclic $\E_>$ bimodule with cyclic vector $\one$. It is a free $\E_>$ right module generated by $N$. It is also a free 
$\E_>$ left module generated by $N$. 
\end{cor}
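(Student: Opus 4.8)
The plan is to derive all three assertions from the vector-space decomposition $Sh_1(u)=\J_0\oplus N$ of \eqref{L plus}, the description $\J_0=Sh_1(u)*Sh_0'$ coming from \eqref{J0}, and the isomorphism $Sh_1(u)/\J_0\simeq\F(u)$ of Proposition \ref{fock isom}. I grade $Sh_1(u)$, $Sh_0$ and $N$ by principal degree (the number of variables); the right action satisfies $Sh_{1,k}(u)*Sh_{0,m}\subseteq Sh_{1,k+m}(u)$, and $\kappa$ preserves this grading (both sides of \eqref{sh q com} raise the principal degree by $r$, since $h_{-r}^\perp$ has principal degree $r$), so $N=\oplus_n N_n$ with $\dim N_n=\dim\F(u)_n$.

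First I show that $N$ generates $Sh_1(u)$ as a right $\E_>$-module, $Sh_1(u)=N*Sh_0$, by induction on the principal degree $n$. For $n=0$ we have $Sh_{1,0}(u)=\C\one=N_0$. For the inductive step, the graded form of \eqref{L plus} gives $Sh_{1,n}(u)=N_n\oplus(\J_0\cap Sh_{1,n}(u))$, and by \eqref{J0} the intersection equals $\sum_{m\ge1}Sh_{1,n-m}(u)*Sh_{0,m}$; each factor $Sh_{1,n-m}(u)$ lies in $N*Sh_0$ by the inductive hypothesis, hence so does all of $Sh_{1,n}(u)$. Cyclicity of the bimodule is then immediate: by the defining relation \eqref{sh q com}, every $\kappa(v)$ is obtained from $\one$ by iterated left and right multiplication by the elements $h_{-r}^\perp\in\E_>$, so the sub-bimodule generated by $\one$ contains $N$, and therefore contains $N*Sh_0=Sh_1(u)$.

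It remains to prove that the multiplication map $\mu\colon N\otimes Sh_0\to Sh_1(u)$ is injective, which is the heart of the matter. Since $\mu$ is a surjection compatible with the number-of-variables grading, but each graded piece of $Sh_0$ is infinite-dimensional, I would refine the comparison by the Gordon filtration of Appendix \ref{gordon sec} (equivalently, by homogeneous degree with $u$ assigned degree one): on the associated graded every bihomogeneous component is finite-dimensional, and the filtration produces a basis of $\mathrm{gr}\,Sh_1(u)$ labelled by $\cP$ times a basis of $Sh_0$. This is precisely the computation behind Corollary \ref{Gordon-cor}, and it yields the factorization of graded dimensions $H_{Sh_1(u)}=H_{\F(u)}\cdot H_{Sh_0}=H_N\cdot H_{Sh_0}$. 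A surjection between bigraded spaces of equal finite dimension in each bidegree is an isomorphism, so $\mu$ is an isomorphism and $Sh_1(u)$ is free on $N$ as a right module.

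For the left module the argument is the mirror image. I would first establish the left analogue of \eqref{L plus}, namely $Sh_1(u)=N\oplus(Sh_0'*Sh_1(u))$; the nontrivial point is that the \emph{same} subspace $N$ is a complement on both sides, which I expect to follow from the balanced $q^r$-twisted form of \eqref{sh q com} together with the same Gordon basis read from the left. Granting this, the induction of the second paragraph with left and right interchanged gives $Sh_0*N=Sh_1(u)$, and the same factorization $H_{Sh_1(u)}=H_{Sh_0}\cdot H_N$ shows the left multiplication $Sh_0\otimes N\to Sh_1(u)$ is injective. The main obstacle throughout is this injectivity step: cyclicity and surjectivity are soft consequences of \eqref{L plus} and \eqref{J0}, whereas freeness requires the genuinely combinatorial input that the graded dimensions of $Sh_1(u)$ factor as $\dim\F(u)\cdot\dim Sh_0$, supplied by the Gordon filtration, and the secondary subtlety is verifying that the single subspace $N$ serves simultaneously for both one-sided structures.
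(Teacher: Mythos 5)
Your soft steps are fine and indeed more detailed than the paper itself, whose entire proof is two sentences (cyclicity from Proposition \ref{fock isom}; freeness declared ``clear''): the induction on principal degree giving $Sh_1(u)=N\ast Sh_0$ from \eqref{L plus} and \eqref{J0} is correct, and so is cyclicity via \eqref{sh q com} and the cyclicity of $\F(u)$ over the $h^\perp_{-r}$. The genuine gap is in your injectivity step. The claim that refining by homogeneous degree (with $u$ assigned degree one, or after passing to the Gordon associated graded) makes ``every bihomogeneous component finite-dimensional'' is false: for $n\ge2$ the component of $Sh_{0,n}$ of fixed homogeneous degree $d$ is infinite-dimensional (symmetric Laurent polynomials of fixed total degree, e.g.\ $\Sym\, x_1^a x_2^{d-a}$, $a\in\Z$; the wheel condition \eqref{wheel1} is vacuous for $n=2$); the numerators $f$ in $Sh_{1,n}(u)$ do not involve $u$, so grading $u$ changes nothing; and the Gordon graded pieces are $\rho_\la(V_{n,\la})\simeq Sh_{0,n-|\la|}$ times an explicit factor, again infinite-dimensional. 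Hence the Hilbert series $H_{Sh_0}$, $H_{Sh_1(u)}$ you invoke do not exist, and the principle ``a surjection between bigraded spaces of equal finite dimensions is an isomorphism'' has nothing to apply to. The Gordon filtration is the right input, but it must be used as a system of leading-term functionals rather than as a dimension count: given a relation $\sum_j\kappa(v_j)\ast F_j=0$ of principal degree $n$ with $\kappa(v_j)\in N_{m_j}$, $F_j\in Sh_{0,n-m_j}$, apply $\rho_\la$ with $|\la|=M:=\max_j m_j$; Proposition \ref{rho prop}(i) kills all terms with $m_j<M$, while \eqref{rho-la4} turns each surviving term into $const\cdot\rho_\la(\kappa(v_j))\,F_j$ times a fixed nonzero factor. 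Since these $\rho_\la$ also vanish on $\J_0\cap Sh_{1,M}(u)$, they descend to the $p(M)$-dimensional space $N_M\simeq Sh_{1,M}(u)/\J_0$, where the computation of Corollary \ref{Gordon-cor} (vanishing graded pieces for $|\la|<M$, lines for $|\la|=M$) shows they jointly separate points; so the $p(M)\times p(M)$ matrix $\bigl(\rho_\la(\kappa(v_j))\bigr)$ is invertible, forcing $F_j=0$ for $m_j=M$, and downward induction gives freeness of the right action with no counting of infinite dimensions.

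The second gap is the left-module statement, which your proposal explicitly defers (``which I expect to follow''). The decomposition $Sh_1(u)=N\oplus\bigl(Sh_0'\ast Sh_1(u)\bigr)$ with the \emph{same} $N$ is precisely what needs proof, and it does not follow by mirror symmetry: the left and right actions are not reflections of one another, since the left action carries the extra weight factor $\prod_i\phi(u,x_i)$ absent from the right action, and the paper's own remark about $\tilde\kappa$ (replacing $q$ by $q^{-1}$ in \eqref{sh q com}) shows that the naive reflection produces a \emph{different} complement, characterized by vanishing at infinity rather than at zero. One must instead argue directly, e.g.\ rewriting $\kappa(v)\ast h^\perp_{-r}=q^{-r}\bigl(h^\perp_{-r}\ast\kappa(v)-\kappa(h^\perp_{-r}v)\bigr)$ to convert right multiples into left ones and running a left-handed analogue of the Gordon argument for surjectivity and independence. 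To be fair, the paper offers nothing beyond the assertion that both actions are ``clearly free,'' so you correctly located where the mathematical content lies; but as written, your counting step is invalid and your left-module step is a conjecture, so the proposal does not yet constitute a proof.
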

\begin{proof}
The first statement follows from Proposition \ref{fock isom}. 
The right and left actions of $Sh_{0}$ are clearly free. The corollary follows.
\end{proof}

\medskip

The subspace $N$ has a curious description in terms of regularity conditions. 

We call a function $G(x_1,\dots,x_n)\in Sh_{1}(u)$ regular at zero if there exists a well-defined limit $\lim_{t\to 0}G(ty_1,\dots,ty_k, x_{k+1},\dots,x_n)$, $k=1,\dots,n$. We call a function $G(x_1,\dots,x_n)\in Sh_{1}(u)$ 
regular at infinity if there exists a well-defined limit $\lim_{t\to \infty }G(ty_1,\dots,ty_k, x_{k+1},\dots,x_n)$, $k=1,\dots,n$.

\begin{prop}
A function $G(x_1,\dots,x_n)\in Sh_{1}(u)$ belongs to $N$ if and only if it is regular at zero, regular at infinity and
$\lim_{t\to 0}G(ty_1,\dots,ty_n)=0$.
\end{prop}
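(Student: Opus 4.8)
The plan is to characterize $N$ via a bijection between the regularity conditions and the defining $q$-commutator structure of $\kappa$ in \eqref{sh q com}, using the direct sum decomposition \eqref{L plus} as the central tool. First I would verify that every element of $N$ satisfies the three stated conditions. Since $N=\kappa(\F(u))$ and $\kappa$ is generated by iterating the operation $v\mapsto h^\perp_{-r}\kappa(v)-q^r\kappa(v)h^\perp_{-r}$ starting from $\one$, I would compute directly how left and right multiplication by $h^\perp_{-r}$ (which lives in $\E_>$ by the remark that $h^\perp_{-r},Ce^\perp_{-r}\in\E_>$) affects the behavior of a function as $t\to 0$ and $t\to\infty$. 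The key point is that the shuffle product with an element of $Sh_0$ introduces the factors $\omega(x_{m+j},x_i)$ and $\phi(u,x_i)$ (or $\omega(x_i,x_{m+j})$ on the right), and the asymmetry between these two products should be exactly compensated by the $q^r$ twist so that the scaling limits exist. I expect that an induction on the principal degree, tracking the leading behavior of $\kappa(v)$ under simultaneous rescaling of subsets of variables, shows regularity at $0$ and $\infty$, while the vanishing $\lim_{t\to 0}G(ty_1,\dots,ty_n)=0$ reflects that $N$ contains no constant-at-zero component beyond $\one$ in positive degree.

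For the converse I would argue by dimension, exploiting \eqref{L plus}. Let $N'\subset Sh_{1}(u)$ denote the subspace of functions satisfying the three regularity conditions. Having shown $N\subseteq N'$, it suffices to prove $N'\cap \J_0=0$, for then the decomposition $Sh_{1}(u)=\J_0\oplus N$ forces $N'=N$. So the heart of the matter is: no nonzero element of $\J_0$ is regular at zero, regular at infinity, and vanishes under total rescaling at $0$. Recall $\J_0=\mathrm{Span}_\C\{G\ast F\mid G\in Sh_{1}(u),\ F\in Sh_{0,n},\ n\ge1\}$, i.e. the image of the right action of the augmentation ideal. I would analyze a generic element $G\ast F$ of $\J_0$ under the rescaling $x_i\mapsto tx_i$ and extract, via the explicit $\omega$ and the residue/Symmetrization structure, a nonvanishing leading term; the singularity at zero or infinity produced by the shuffle factors $\omega(x_i,x_{m+j})=g(x_i,x_{m+j})/(x_i-x_{m+j})^3$ should obstruct regularity unless the element is zero.

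The main obstacle, I expect, is controlling the interaction between the symmetrization $\Sym$ in the shuffle product and the scaling limits: after symmetrizing, individual singular terms can cancel, so one cannot simply read off the leading behavior termwise. To handle this cleanly I would pass to a partial-fraction or ``pole-order'' invariant that is insensitive to symmetrization — for instance, examining the order of the pole along the diagonal $x_i=x_j$ or the order of vanishing/growth of $f(x_1,\dots,x_n)$ (the numerator) under the rescaling, rather than of $F$ itself. The defining denominator $\prod_{i<j}(x_i-x_j)^2\prod_i(x_i-u)$ has fixed homogeneous degree, so regularity at $0$ and $\infty$ translates into precise upper and lower bounds on $\deg f$ in each block of variables, and the extra wheel condition \eqref{wheel2} together with the factor $\phi(u,x_i)$ from the right action should be what pins down $\J_0$-elements as failing these bounds. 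Once the scaling behavior is phrased entirely in terms of degree bounds on the polynomial numerator, the symmetrization issue dissolves, since $\Sym$ preserves these homogeneity constraints.
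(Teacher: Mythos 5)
Your ``only if'' half is essentially the paper's own argument: the paper invokes Negut's result that an element of $Sh_{0}$ lies in the commutative algebra generated by the $h^\perp_{-r}$ if and only if it is regular at zero and at infinity, and then checks inductively that the twisted bracket \eqref{sh q com} preserves the regularity and vanishing conditions; the key degenerations are that $\omega$ is homogeneous of degree $0$ with $\omega(0,w)=\omega(z,0)=1$ and $\omega\to 1$ at infinity, and that $\lim_{t\to 0}\phi(u,tx)=q$, whose $r$-fold product $q^{r}$ is exactly offset by the twist in \eqref{sh q com}. Your induction is this computation, and your reduction of the converse to $N'\cap \J_0=0$ via \eqref{L plus} (with $N'$ the subspace cut out by the conditions) is logically sound.

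The genuine gap is in the mechanism you propose for $N'\cap\J_0=0$. The shuffle factors $\omega$ produce no singularities under scaling, for the reasons just listed, so right multiplication by a regular element of $Sh_{0}$ \emph{preserves} regularity at both ends; elements of $\J_0$ are not singular, and they do not violate the numerator-degree windows either. Concretely, $\one\ast\sigma(h^\perp_{-1})=c_1\in\J_0$ is regular at zero and infinity and satisfies every degree bound your scheme would impose; it is excluded from $N$ only by the vanishing condition. Worse, take $G=\kappa(h^\perp_{-1}\ket{\emptyset})=c_1(q^{-1}-q)\,x/(x-u)$ and $W=G\ast\sigma(h^\perp_{-1})=c_1^2(q^{-1}-q)\,\Sym\bigl[\tfrac{x_2}{x_2-u}\,\omega(x_1,x_2)\bigr]$: this is a nonzero element of $\J_0$ which is regular at zero and at infinity and whose full scaling limit at zero vanishes; it is detected only by the \emph{partial} scaling $\lim_{t\to0}W(ty,x_2)=\mathrm{const}\cdot x_2/(x_2-u)\neq 0$. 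So no termwise analysis of pole orders or degree bounds on generators $G\ast F$ can separate $\J_0$ from the regular functions: the separating data are the vanishing conditions at every intermediate scaling $k$ (which \eqref{sh q com} preserves precisely because of the factor $q^{r}$), not growth bounds, and linear combinations would defeat a generator-by-generator argument in any case. The paper's actual proof of the ``if'' part is a dimension count: using the Gordon-type filtration of \cite{Ng}, as in Appendix \ref{gordon sec}, one bounds the graded dimension of the space cut out by the conditions by $p(n)$, which together with $N\subseteq N'$ and $\dim \F(u)_n=p(n)$ forces $N'=N$. Your translation of regularity into degree windows on the numerator $f$ is the right starting point for such a count, but it must be aimed at bounding $\dim N'$, not at exhibiting singularities of $\J_0$.
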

\begin{proof} It is known \cite{Ng}, that a function $F(x_1,\dots,x_m)\in 
Sh_{0,m}$ belongs to the commutative algebra generated by 
$h_{-r}^\perp$, $r>0$, 
if and only if it is regular at zero and at infinity. Then it is easy to check
that action (\ref{sh q com}) preserves the regularity and vanishing conditions described in the proposition. 
It is easy to check that 
action (\ref{sh q com}) preserves the regularity conditions 
at zero and at infinity.
Noting $\lim_{t\to 0}\phi(u,tx)=q$
we see further that the vanishing condition is also preserved.
Since $\one$ satisfies these conditions, we obtain the only if part. For the if part, we compute the dimension of the space of functions using the same filtration as in \cite{Ng}. 
\end{proof}

We remark that if one defined a map $\tilde\kappa:\ \F(u) \to Sh_{1}(u)$ by changing $q$ to $q^{-1}$ in 
\eqref{sh q com} then the image of $\tilde\kappa$ would consist of functions $G(x_1,\dots,x_n)\in Sh_{1}(u)$  which are regular at zero, regular at infinity and satisfy $\lim_{t\to \infty }G(ty_1,\dots,ty_n)=0$.

\section{The subspace of matrix elements of $L$ operators}
\label{L oper sec}
In this section we construct an inclusion of bimodule $Sh_{1}(u)$ 
to a completion of algebra $\E_{\gge}$. 
Under this inclusion the subspace 
$N \subset Sh_{1}(u)$, see \eqref{subspace N},
has a description in terms of matrix elements of $L$ operators.

\subsection{The matrix elements of $L$ operators}\label{L operator subset}
Let $\cR$ be the universal $R$ matrix \eqref{univR}, 
and set $\cR'=q^{c\otimes d+d\otimes c}\cR$.  
For bounded quasi-finite modules $V,W$, 
$\cR'$ gives a well defined operator on a tensor product 
$V(u_1)\otimes W(u_2)$ for generic $u_1,u_2$.

For $v\in\F(u)$ and $w\in\F(u)^*$, let
\begin{align*}
L_{w,v}=(1\otimes w)\cR'(1\otimes v)
\end{align*}
denote the matrix element of $\cR'$
with respect to the second component. We call elements $L_{w,v}$ 
matrix elements of $L$ operators. 

We are mostly concerned with the case $w=\bra{\emptyset}$. 
In what follows we abbreviate $\bra{\emptyset}$, $\ket{\emptyset}$ simply as
$\emptyset$ in the index of the matrix elements of $L$ operators.

If a coproduct of an element of $\E$ is known, one can compute its 
commutation relations with the matrix elements of $L$ operators. 
In particular, we have the following commutation relations with perpendicular generators which involve only matrix elements of $L$ operators 
with $w=\bra{\emptyset}$.

\begin{lem}\label{LL-comm}
For all $r,n>0$ and $v\in \F(u)$, we have 
\begin{align}
&[h^\perp_{-r},L_{\emptyset,v}]_{q^r}= L_{\emptyset,h^\perp_{-r}v}\,,
\label{LL1}\\
&[e^\perp_{-n},L_{\emptyset,v}]_{q^{-n}}
= L_{\emptyset,e^\perp_{-n}v}+
q^{-n}\sum_{j\ge1}L_{\emptyset,\psi^{+,\perp}_{j}v}\cdot e^\perp_{-n-j}\,. 
\label{LL2}
\end{align}
In addition we have 
\begin{align}
&[e^\perp_{0},L_{\emptyset,v}]
= L_{\emptyset, e^\perp_0v}
-\gamma_1L_{\emptyset, v}
+
\sum_{j\ge1}L_{\emptyset,\psi^{+,\perp}_{j}v}\cdot e^\perp_{-j}\,,
\label{LL3}\\
&[f^\perp_{0},L_{\emptyset,v}]
= L_{\emptyset, f^\perp_0v}
-\gamma_{-1}L_{\emptyset, v}
+
\sum_{j\ge1}L_{\emptyset,f^{\perp}_{j}v}\cdot \psi^{-,\perp}_{-j}\,.
\label{LL4}
\end{align}
Here we set $[A,B]_p=AB-pBA$.
\end{lem}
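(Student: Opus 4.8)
The plan is to derive all four commutation relations by the same mechanism: compute how the perpendicular generators act through the coproduct, and exploit the intertwining property $\cR\,\Delta(x)=\Delta^{\mathrm{op}}(x)\,\cR$ to transfer the action across the $R$ matrix. Since $L_{\emptyset,v}=(1\otimes\bra\emptyset)\cR'(1\otimes v)$ and the generators $h^\perp_{-r}, e^\perp_{-n}, f^\perp_0$ all lie in $\E^\perp_{\gge}$ or $\E^\perp_{\lle}$, I would write $\Delta^{\mathrm{op}}(x)\,\cR'=\cR'\,\Delta(x)$, sandwich this identity between $1\otimes\bra\emptyset$ on the left and $1\otimes v$ on the right, and then move the first tensor factor of $\Delta(x)$ out as an operator acting in $\E_{\gge}$ (the ambient algebra where $L_{\emptyset,v}$ lives) and the second factor as an operator acting on $v\in\F(u)$.

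First I would treat \eqref{LL1}. Using the primitive-like coproduct \eqref{copro4}, $\Delta(h^\perp_{-r})=h^\perp_{-r}\otimes(C^\perp)^r+1\otimes h^\perp_{-r}$, and the opposite coproduct obtained by swapping tensor factors, the intertwining relation yields a two-term identity. Applying $1\otimes\bra\emptyset$, using $\bra\emptyset (C^\perp)^r=q^r\bra\emptyset$ on the level-$(1,q)$ Fock module, and separating the term where $h^\perp_{-r}$ acts on $v$ from the term where it acts on the ambient $\E_{\gge}$ factor, I expect the $q^r$-twisted commutator $[h^\perp_{-r},L_{\emptyset,v}]_{q^r}$ to emerge on one side and $L_{\emptyset,h^\perp_{-r}v}$ on the other. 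The factor $q^r$ is precisely the eigenvalue of $(C^\perp)^r$ on $\F(u)$, which is why the twist appears.

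Next, for \eqref{LL2}, \eqref{LL3}, I would use the more complicated coproduct \eqref{copro1} for $e^\perp_n$, which contains the sum $\sum_{j\ge0}e^\perp_{n-j}\otimes\psi^{+,\perp}_j(C^\perp)^n$. The same sandwiching procedure now produces the extra terms $\sum_{j\ge1}L_{\emptyset,\psi^{+,\perp}_j v}\cdot e^\perp_{-n-j}$: here the $\psi^{+,\perp}_j$ lands on $v$ while the shifted $e^\perp_{-n-j}$ remains as a right multiplier in $\E_{\gge}$. The split between $n<0$ (giving the clean $q^{-n}$-twist in \eqref{LL2}) and $n=0$ (giving the untwisted commutator plus the $-\gamma_1 L_{\emptyset,v}$ correction in \eqref{LL3}) comes from the $(C^\perp)^n$ factor, which is trivial at $n=0$, and from the $j=0$ term $\psi^{+,\perp}_0=C^{\perp}$ contributing the constant $\gamma_1$ via $h_1=\epsilon^\perp_0$ acting on the lowest weight. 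Relation \eqref{LL4} is handled symmetrically using \eqref{copro2}, with $\psi^{-,\perp}_{-j}$ now appearing as the right multiplier.

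The main obstacle is bookkeeping the completion: the $R$ matrix is only defined in a completion of $\E^\perp_{\gge}\otimes\E^\perp_{\lle}$, and the factor $\cR^{(0)}$ must be handled by the regularized intertwining property stated in the excerpt, valid because $c$ acts as $0$ on these modules. I would need to check that each manipulation—moving generators across $\cR'=q^{c\otimes d+d\otimes c}\cR$, taking the matrix element, and reindexing the infinite sums over $j$—converges and that the resulting series of matrix elements of $L$ operators is well defined in the completion of $\E_{\gge}$. Verifying that only $w=\bra\emptyset$ matrix elements appear (i.e.\ that the terms where $e^\perp$ or $f^\perp$ would act nontrivially on $\bra\emptyset$ drop out because $\bra\emptyset$ is the lowest weight covector) is the key structural point that keeps the right-hand sides closed within the stated family.
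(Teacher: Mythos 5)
Your proposal is correct and follows essentially the same route as the paper: the paper's proof likewise writes out the four intertwining relations for $\cR'$ derived from the coproduct formulas \eqref{copro1}--\eqref{copro4} (specialized to the level-$(1,q)$ Fock module in the second slot, where $C=1$ and $C^\perp=q$ produce the $q^{\pm r}$, $q^{-n}$ twists) and then takes the matrix element between $\bra{\emptyset}$ and $v$ in the second component, the unwanted terms vanishing because the positive-principal-degree elements $h^\perp_{-r}$, $e^\perp_{-j}$, $\psi^{-,\perp}_{-j}$ annihilate $\bra{\emptyset}$ from the right while $e^\perp_0=h_1$ and $f^\perp_0=h_{-1}$ act on the vacuum by $\gamma_{\pm1}$. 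One cosmetic slip worth noting: $\psi^{+,\perp}_0=C$ (not $C^\perp$), and the $-\gamma_1$ correction in \eqref{LL3} arises from the $1\otimes e^\perp_0$ summand of the opposite coproduct hitting $\bra{\emptyset}$ rather than from the $j=0$ term of the $\psi$-sum, though you do correctly identify $h_1=e^\perp_0$ acting on the lowest weight as its source.
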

\begin{proof}
The element $\cR'$ has the intertwining property
\begin{align*}
&\cR'\left(h^\perp_r\otimes 1+\bigl(C^\perp\bigr)^{-r}\otimes h^\perp_{r}\right)
=\left(q^{-r} h^\perp_r\otimes 1+1\otimes h^\perp_{r}\right)\cR'\,,
\\
&\cR'\left(q^rh^\perp_{-r}\otimes 1+1\otimes h^\perp_{-r}\right)
=\left(h^\perp_{-r}\otimes 1+\bigl(C^\perp\bigr)^{r}\otimes h^\perp_{-r}\right)\cR'\,,
\\
&\cR'
\Bigl(
q^n e_n^\perp\otimes 1+1\otimes e_n^\perp
+q^n\sum_{j\ge1}e^{\perp}_{n-j}\otimes \psi^{+,\perp}_j\Bigr)\\
&\hspace{50pt}=\Bigl(
e_n^\perp\otimes 1+\bigl(C^\perp\bigr)^{n}\otimes e_n^\perp
+\sum_{j\ge1} \bigl(C^\perp\bigr)^{n}\psi^{+,\perp}_j\otimes e^{\perp}_{n-j}
\Bigr)
\cR'\,,
\\
&\cR'\Bigl(
f_n^\perp\otimes 1+\bigl(C^\perp\bigr)^{n}\otimes f_n^\perp
+\sum_{j\ge1} \bigl(C^\perp\bigr)^{n}\psi^{-,\perp}_{-j}\otimes f^{\perp}_{n+j}
\Bigr)
\\
&\hspace{50pt}=\Bigl(
q^n f_n^\perp\otimes 1+ 1\otimes f_n^\perp
+q^n\sum_{j\ge1}f^{\perp}_{n+j}\otimes \psi^{-,\perp}_{-j}
\Bigr)\cR'\,,
\end{align*}
where $r>0$ and $n\in\Z$.
Taking the matrix element between $\bra{\emptyset}$ and $v$ 
in the second component, we obtain the lemma. 
\end{proof}

The above intertwining relations allow us to compute $L_{\emptyset,\emptyset}$ explicitly.
\begin{prop}\label{L-comm}
The element $L_{\emptyset,\emptyset}$ satisfies 
the commutation relations 
\begin{align}
&(z-u)e(z)L_{\emptyset,\emptyset}=(q^{-1}z-qu )L_{\emptyset,\emptyset}e(z)\,,
\label{e-L}\\
&(q^{-1}z-qu)f(z)L_{\emptyset,\emptyset}=(z-u )L_{\emptyset,\emptyset}f(z)\,,
\label{f-L}\\
&[h_r,L_{\emptyset,\emptyset}]=0\quad (\forall r\neq0).
\label{h-L}
\end{align}
Explicitly we have 
\begin{align}
L_{\emptyset,\emptyset}=q^{-d^\perp}\exp\left(\sum_{r=1}^\infty (1-q_2^{-r})h_r u^{-r}\right).
\label{Lee2}
\end{align}
\end{prop}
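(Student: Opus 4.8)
The plan is to characterize $L_{\emptyset,\emptyset}$ by the commutation relations \eqref{e-L}--\eqref{h-L} and then to solve them, rather than to evaluate the matrix element directly from the factorization \eqref{univR}: the canonical factor $\cR^{(2)}$ contributes infinitely many principal-degree-zero terms of the form $\bra{\emptyset}\fp\cdots\hp\ket{\emptyset}$, so a head-on computation is impractical. I abbreviate $L=L_{\emptyset,\emptyset}$ and first record the relevant vacuum data on $\F(u)$: since $\hp_r\ket{\emptyset}=0$ for $r>0$ one has $\psi^{+,\perp}_j\ket{\emptyset}=\delta_{j,0}\ket{\emptyset}$; since $\ket{\emptyset}$ is lowest weight, $f(z)\ket{\emptyset}=0$ and $\fp_j\ket{\emptyset}=0$ for $j\ge1$; by \eqref{h-eig}, $h_{\pm1}\ket{\emptyset}=\gamma_{\pm1}\ket{\emptyset}$; and the Fock formula gives $e_k\ket{\emptyset}=c_0\,u^k\ket{(1)}$ for a constant $c_0$.

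Feeding $v=\ket{\emptyset}$ into \eqref{LL3} and \eqref{LL4} and using $\ep_0=h_1$, $\fp_0=h_{-1}$, every correction term drops out and leaves $[h_1,L]=[h_{-1},L]=0$. Next, \eqref{LL1} with $r=1$ (where $\hp_{-1}=e_0$) and \eqref{LL2} with $n=1$ (where $e_1=\ep_{-1}$ at level one) give the seeds $[e_0,L]_{q}=c_0\,L_{\emptyset,\ket{(1)}}$ and $[e_1,L]_{q^{-1}}=c_0u\,L_{\emptyset,\ket{(1)}}$, hence $[e_1,L]_{q^{-1}}=u\,[e_0,L]_{q}$, which is the coefficient form of \eqref{e-L} at the bottom rung. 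Using $e_{n+1}=[e_n,h_1]$ and $e_{n-1}=[h_{-1},e_n]$ from \eqref{he} at level one, together with $[h_{\pm1},L]=0$, a two-line induction propagates this to $[e_{k+1},L]_{q^{-1}}=u[e_k,L]_{q}$ for all $k\in\Z$, which is precisely \eqref{e-L}.

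To obtain \eqref{h-L} for all $r$, set $D_r=[h_r,L]$ and observe that it satisfies the same $e$-commutation as $L$: from the current form $h_re(w)=e(w)h_r-\tfrac{1}{r} w^re(w)$ of \eqref{he} and from \eqref{e-L} one computes $e(z)D_r=\rho(z)D_re(z)$ with $\rho(z)=(q^{-1}z-qu)/(z-u)$. Therefore $D_rL^{-1}$ centralizes every $e_n$, and since $[h_1,h_r]=0$ at level one it also centralizes $h_1$; being of positive homogeneous degree in the completion of $\E_{\gge}$ and centralizing $\E_{>}$ together with $h_1$, it must vanish, so $D_r=0$. The analogous computation on the other side gives $[h_r,L]=0$ for $r<0$ as well, proving \eqref{h-L}. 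Finally, \eqref{f-L} follows by commuting $L$ through the defining relation $[e(z),f(w)]=\tfrac{1}{\kappa_1}(\delta(w/z)\psi^+(w)-\delta(z/w)\psi^-(z))$: since $[\psi^\pm,L]=0$ the commutator commutes with $L$, while \eqref{e-L} forces $f(w)L=\sigma(w)Lf(w)$ for a scalar $\sigma(w)$ satisfying $\sigma(w)\rho(w)=1$ on the diagonal $z=w$, i.e. $\sigma(w)=(w-u)/(q^{-1}w-qu)$.

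It remains to identify $L$ with \eqref{Lee2}, which I would do by verifying the right-hand side solves \eqref{e-L}--\eqref{h-L} and carries the correct normalization. Relation \eqref{h-L} is clear, as $h_r$ commutes with all $h_s$ at level one and with $d^\perp$ (having principal degree $0$). For \eqref{e-L}, the identity $[h_r,e(w)]=-\tfrac{1}{r} w^re(w)$ yields the conjugation $\exp(\sum_{r\ge1}(1-q_2^{-r})u^{-r}h_r)\,e(w)=\tfrac{1-w/u}{1-w/(q_2u)}\,e(w)\exp(\cdots)$; combined with $q^{-d^\perp}e(w)=q^{-1}e(w)q^{-d^\perp}$ and $q_2=q^2$, the prefactor collapses to exactly $\rho(w)$, reproducing \eqref{e-L} (and \eqref{f-L} follows by the parallel computation using \eqref{hf}). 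Uniqueness then closes the argument: by \eqref{h-L} the solution lies in the completed commutative algebra generated by $\{h_r\mid r>0\}$ and the grouplike elements, \eqref{e-L} pins the coefficients to $(1-q_2^{-r})u^{-r}$ and forces the prefactor $q^{-d^\perp}$ through the principal-degree shift, and the absolute normalization is fixed by the leading contribution $q^{c\otimes d+d\otimes c}\cR^{(0)}\mapsto q^{-d^\perp}$ of $\cR'$ on $\ket{\emptyset}$. I expect the derivation of \eqref{f-L} to be the main obstacle: because the modes $f_n$ lie outside $\E_{\gge}$ it cannot be read off a clean matrix element as in \eqref{e-L}, and one must route through the full statement \eqref{h-L} and the $e$--$f$ commutation, keeping careful track of the central values $C=1$ and $C^\perp=q$.
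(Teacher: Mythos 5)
Your first half is sound and essentially coincides with the paper's argument: specializing Lemma \ref{LL-comm} at $v=\ket{\emptyset}$ (where $\psi^{+,\perp}_j\ket{\emptyset}=0$ and $f^\perp_j\ket{\emptyset}=0$ for $j\ge1$) gives $[h_{\pm1},L_{\emptyset,\emptyset}]=0$, the seed $(e_1-ue_0)L_{\emptyset,\emptyset}=L_{\emptyset,\emptyset}(q^{-1}e_1-que_0)$, and the $h_{\pm1}$-induction to all modes of \eqref{e-L}; your verification that \eqref{Lee2} solves the relations is also correct and in fact more explicit than the paper's. The genuine gap is in your back half, where you invert the paper's order (you try to prove \eqref{h-L} first, then \eqref{f-L}) and both steps rest on an unproved centralizer-triviality claim. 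For $r>0$ your assertion that an element of positive homogeneous degree centralizing $\E_>$ and $h_1$ in $\hat\E_{\gge}$ must vanish is plausible (one can prove it: pdeg-$0$ elements of $\hat\E_\gge$ form a completed commutative algebra in the $h_r$, conjugation of $e(z)$ acts by the shifts $h_r\mapsto h_r-z^r/r$, and only constants are shift-invariant), but you assert it without argument. Worse, for $r<0$ the claim as stated is inapplicable: $D_r=[h_r,L_{\emptyset,\emptyset}]$ then has \emph{negative} homogeneous degree and, since $h_{-s}\notin\E_{\gge}$, does not lie in $\hat\E_{\gge}$ at all, so ``the analogous computation on the other side'' has no stated home and the needed triviality of the centralizer of $\E_>$ in the relevant completion of $\E$ is not established. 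Your deduction of \eqref{f-L} has the same defect: from \eqref{e-L}, \eqref{h-L} and the $[e(z),f(w)]$ relation one only gets that $L_{\emptyset,\emptyset}f(w)L_{\emptyset,\emptyset}^{-1}-\frac{q^{-1}w-qu}{w-u}f(w)$ commutes with all $e(z)$; the phrase ``forces $f(w)L=\sigma(w)Lf(w)$ for a scalar $\sigma(w)$'' presupposes the form rather than deriving it, and again needs a centralizer statement, now in principal degree $-1$.

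The paper avoids all of this by staying computational. Besides the vacuum specializations of Lemma \ref{LL-comm}, it derives two further identities, $[h_1^\perp,L_{\emptyset,\emptyset}]_q=-qL_{\bra{\emptyset}h^\perp_1,\emptyset}$ and $[e_1^\perp,L_{\emptyset,\emptyset}]_q=-quC^\perp L_{\bra{\emptyset}h^\perp_1,\emptyset}-(1-q^2)uh^\perp_1L_{\emptyset,\emptyset}$, and uses $h^\perp_1=f_0$, $e^\perp_1=C^\perp f_1$ together with the left-vacuum relations $\bra{\emptyset}e^\perp_1=qu\bra{\emptyset}h^\perp_1$, $\bra{\emptyset}f^\perp_1=u^{-1}\bra{\emptyset}h^\perp_1$ (from the vertex operator formulas \eqref{VO1}--\eqref{VO2}) to obtain the $f$-seed $(q^{-1}f_1-quf_0)L_{\emptyset,\emptyset}=L_{\emptyset,\emptyset}(f_1-uf_0)$, whence \eqref{f-L} by the same $h_{\pm1}$-propagation. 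Only then does it deduce \eqref{h-L}: \eqref{e-L} and \eqref{f-L} give $(z-u)(q^{-1}z-qu)\delta(z/w)[\psi^+ -\psi^-,L_{\emptyset,\emptyset}]=0$, so the coefficients $X_j$ of the commutator satisfy $q^{-1}X_{j+1}-(q+q^{-1})uX_j+qu^2X_{j-1}=0$, and the already-known initial data $X_0=X_{\pm1}=0$ kill everything --- no centralizer input whatsoever. To repair your write-up along your own lines you would have to state and prove the centralizer lemmas in the appropriate completions (including for elements outside $\hat\E_{\gge}$); the shorter fix is to derive the missing $f$-seed as the paper does, which makes \eqref{f-L} exactly parallel to \eqref{e-L} and reduces \eqref{h-L} to the elementary three-term recursion.
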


\begin{proof}
In Lemma \ref{LL-comm}, we consider the case $v=\ket{\emptyset}$. 
We need two more formulas derived similarly:
\begin{align*}
&[h_1^\perp,L_{\emptyset,\emptyset}]_q
=-q L_{\bra{\emptyset}h^\perp_1,\emptyset}\,,\\
& [e_1^\perp,L_{\emptyset,\emptyset}]_q
=-qu C^\perp L_{\bra{\emptyset}h^\perp_1,\emptyset}
-(1-q^2)u h^\perp_1L_{\emptyset,\emptyset}\,.
\end{align*}

Using
\begin{align*}
&e_0^\perp=h_1,\quad e_1^\perp=C^\perp f_1,\quad e_{-1}^\perp=e_1,\quad h_1^\perp=f_0,
\\
&f_0^\perp=h_{-1},\quad f_1^\perp= f_{-1},\quad f_{-1}^\perp=(C^\perp)^{-1}e_{-1},\quad h_{-1}^\perp=e_0\,,
\end{align*}
and the relations
\begin{align*}
&e^\perp_{-1}\ket{\emptyset}=u h^\perp_{-1}\ket{\emptyset}\,,
\quad 
f^\perp_{-1}\ket{\emptyset}=(qu)^{-1} h^\perp_{-1}\ket{\emptyset}\,,
\\
&\bra{\emptyset}e^\perp_{1}=q u\bra{\emptyset} h^\perp_{1}\,,
\quad 
\bra{\emptyset}f^\perp_{1}=u^{-1} \bra{\emptyset}h^\perp_{1}\,,
\end{align*}
which follow from \eqref{VO1}--\eqref{VO2}, 
we find
\begin{align*}
&[h_1,L_{\emptyset,\emptyset}]=[h_{-1},L_{\emptyset,\emptyset}]=0,\\
&(e_1-u e_0)L_{\emptyset,\emptyset}=L_{\emptyset,\emptyset}(q^{-1}e_1-qu e_0),\\
&(q^{-1}f_1-q u f_0)L_{\emptyset,\emptyset}=L_{\emptyset,\emptyset}(f_1-u f_0)\,.
\end{align*}
Taking commutators between the last two lines and  $h_{\pm1}$, we obtain 
\eqref{e-L} and  \eqref{f-L}.

Furthermore, \eqref{e-L} and  \eqref{f-L} imply that 
\begin{align*}
(z-u)(q^{-1}z-qu)\delta(z/w)[\psi^{+,\perp}(z)-\psi^{-,\perp}(z),L_{\emptyset,\emptyset}]=0\,.
\end{align*}
Let $[\psi^{+,\perp}(z)-\psi^{-,\perp}(z),L_{\emptyset,\emptyset}]=\sum_{j\in\Z}X_jz^{-j}$. Then
\begin{align*}
q^{-1}X_{j+1}-(q+q^{-1})u X_j+qu^2X_{j-1}=0\,.
\end{align*}
We have $X_0=0$, and we already know that $X_{\pm1}=0$. 
From this follows  \eqref{h-L}.

The unique element in the competion of $\E^\perp_{\gge}$ with respect to the homogeneous degree $\mathrm{hdeg}$ when it becomes large satisfying \eqref{e-L}--\eqref{h-L}
is given by \eqref{Lee2}. The lemma follows.
\end{proof}

We denote by $\hat\E_{\gge}$ the completion of algebra $\E_{\gge}$
with respect to the homogeneous degree $\mathrm{hdeg}$ when it becomes large. 
 
\begin{cor}\label{cor:Lv}
 We have $L_{\emptyset,v}\in \hat\E_{\gge}$ for all $v\in \F(u)$. \end{cor}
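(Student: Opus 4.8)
The plan is to bootstrap from the explicit base case $L_{\emptyset,\emptyset}$ established in Proposition \ref{L-comm}, using only the first commutation relation \eqref{LL1} of Lemma \ref{LL-comm}. A priori $L_{\emptyset,v}$ only lives in a completion of the left tensor factor $\E_{\gge}^\perp$ of $\cR'$, and the content of the corollary is to pin it down inside the ``unperpendicular'' completion $\hat\E_{\gge}$. The crucial structural input is that $\F(u)$ is cyclic over the Heisenberg subalgebra generated by $\{h^\perp_{-r}\}_{r>0}$ with cyclic vector $\ket{\emptyset}$, exactly as already used in the construction of $\kappa$ in Section \ref{L sec}. Hence every $v\in\F(u)$ is a linear combination of monomials $h^\perp_{-r_1}\cdots h^\perp_{-r_k}\ket{\emptyset}$, and since $v\mapsto L_{\emptyset,v}$ is linear, it suffices to treat these monomial vectors, which I would do by induction on $k$.

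For the base case $k=0$, formula \eqref{Lee2} exhibits $L_{\emptyset,\emptyset}=q^{-d^\perp}\exp\bigl(\sum_{r\ge1}(1-q_2^{-r})h_r u^{-r}\bigr)$ as a member of $\hat\E_{\gge}$: it involves only $q^{-d^\perp}=(D^\perp)^{-1}$ and the generators $h_r$ with $r>0$, all of which lie in $\E_{\gge}$, arranged as a sum whose homogeneous degree tends to $+\infty$. For the inductive step I write $v=h^\perp_{-r}w$ with $w$ a monomial of length $k$ satisfying $L_{\emptyset,w}\in\hat\E_{\gge}$, and read \eqref{LL1} as the recursion
\[
L_{\emptyset,v}=L_{\emptyset,h^\perp_{-r}w}=[h^\perp_{-r},L_{\emptyset,w}]_{q^r}=h^\perp_{-r}L_{\emptyset,w}-q^r L_{\emptyset,w}\,h^\perp_{-r}.
\]
Here I invoke the observation recorded in Section \ref{toroidal-algebra} that $h^\perp_{-r}\in\E_>\subset\E_{\gge}$ for $r>0$. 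Both terms on the right are then products of an element of $\E_{\gge}$ with an element of $\hat\E_{\gge}$, so they again lie in $\hat\E_{\gge}$, completing the induction. Note that the relations \eqref{LL2}--\eqref{LL4} are not needed at all, precisely because the Heisenberg generators already act cyclically on $\F(u)$.

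The one point requiring care, and the only place I expect genuine (though mild) work, is that $\hat\E_{\gge}$ is closed under the two multiplications appearing in the recursion, i.e. that it is a topological algebra and not merely a vector-space completion. Concretely, for $a=\sum_m a_m\in\hat\E_{\gge}$ with $a_m$ homogeneous of $\mathrm{hdeg}=m$ and $m$ bounded below, one must check that each homogeneous component of a product is a finite sum. For multiplication by the single homogeneous element $h^\perp_{-r}$, which has $\mathrm{hdeg}=0$, this is immediate: $h^\perp_{-r}a_m$ and $a_m h^\perp_{-r}$ again have $\mathrm{hdeg}=m$, so there is no infinite collision of degrees and the product remains a legitimate element of $\hat\E_{\gge}$ with homogeneous degrees still bounded below. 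Thus the completion is stable under the operations used and the induction is valid, proving $L_{\emptyset,v}\in\hat\E_{\gge}$ for all $v\in\F(u)$.
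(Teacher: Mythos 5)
Your proof is correct and follows essentially the same route as the paper: the explicit formula \eqref{Lee2} gives the base case $L_{\emptyset,\emptyset}\in\hat\E_{\gge}$, and since $h^\perp_{-r}\in\E_{>}$ for $r>0$ and $\F(u)$ is cyclic over these Heisenberg generators, induction via \eqref{LL1} finishes the argument. Your extra verification that multiplication by the homogeneous-degree-zero element $h^\perp_{-r}$ preserves the completion is a sensible point the paper leaves implicit, but it does not change the proof.
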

\begin{proof}
We have $L_{\emptyset,\emptyset}\in \hat\E_{\gge}$ from \eqref{Lee2}. 
Since $h_{-r}^\perp\in\E_{>}$ for $r>0$,
the corollary follows
from  \eqref{LL1}.
\end{proof}

We denote $\mc N$ the space of matrix elements of
$L$ operators with the
first component $\bra{\emptyset}$: 
\begin{align*}
\mc N=\on{Span}_\C\{  L_{\emptyset,v}\mid v\in \F(u)\}\subset \hat
\E_{\gge}.
\end{align*}

\subsection{Inclusion of the shuffle algebra to $\hat\E_{\gge}$}\label{Inclu}
Consider the  $\E_{>}$ bimodule 
\begin{align*}
\mc S(u)=\E_{>}\cdot L_{\emptyset,\emptyset}\cdot \E_{>}\subset\hat\E_{\gge}\,.
\end{align*}
Since  $L_{\emptyset,\emptyset}$ satisfies relations 
\eqref{e-L}, there is a surjective map of 
$\E_{>}$ bimodules
\begin{align}
\iota:\, Sh_{1}(u)\to \mc S(u)
\,,\qquad  
\one\mapsto L_{\emptyset,\emptyset}\,.
\label{proj}
\end{align}

\begin{lem}\label{iso-L}
The map  $\iota$ in \eqref{proj} is an isomorphism. 
\end{lem}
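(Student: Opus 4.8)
The plan is to prove that the surjection $\iota:Sh_1(u)\to \mathcal S(u)$ is injective. Since both sides are graded by principal degree (equivalently by the number of variables $n$), I would argue separately in each graded component $Sh_{1,n}(u)$, and it suffices to show that $\dim \mathcal S(u)_n \geq \dim Sh_{1,n}(u)_d$ for each homogeneous degree $d$, or equivalently that no nonzero element of $Sh_{1,n}(u)$ maps to zero.

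First I would exploit the bimodule structure. By Corollary \ref{cyclic}, $Sh_1(u)$ is a free \emph{right} $\E_>$-module generated by the subspace $N$, so every element of $Sh_{1,n}(u)$ can be written uniquely as a sum $\sum_a \kappa(v_a)\ast F_a$ with $v_a\in\F(u)$ and $F_a\in Sh_0$. Applying $\iota$ and using that $\iota$ is a bimodule map, the image is $\sum_a L_{\emptyset,v_a}\cdot \sigma^{-1}(F_a)$, where I use the identification $\iota|_N$ with $\mathcal N$ coming from the right-hand side of \eqref{LL1}. The key point is that the splitting $Sh_1(u)=\J_0\oplus N$ is designed precisely so that $\kappa$ intertwines the $q$-commutator action \eqref{sh q com} with the $q$-commutator relation \eqref{LL1}; this is the content of the forward reference to \eqref{LL1} in Section \ref{L sec}. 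Hence $\iota(N)=\mathcal N$ compatibly with the $h^\perp_{-r}$-action, and $\iota$ respects the free-module decompositions on both sides.

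The decisive step is therefore to establish that $\mathcal S(u)$ is itself a \emph{free} right $\E_>$-module on $\mathcal N$, so that the unique expression $\sum_a L_{\emptyset,v_a}\cdot(\text{element of }\E_>)$ vanishes only when each term does. For this I would use the triangular structure of $\hat\E_{\gge}$: an element of $\mathcal S(u)$ lies in the completion of $\E_>\cdot\E_0$ (products of $e$'s and $h$'s), and the PBW-type factorization of $\hat\E_{\gge}$ lets one read off the right $\E_>$-component unambiguously. Concretely, $L_{\emptyset,\emptyset}\in\widehat{\E_0}$ by \eqref{Lee2}, and by \eqref{LL1}--\eqref{LL2} the elements $L_{\emptyset,v}$ have leading $\E_0$-part (their image in $\hat\E_{\gge}/\hat\E_{\gge}\E_>'$) given by the corresponding Fock vector; since $\{v\}=\{\ket\lambda\}$ is a basis, these leading parts are linearly independent, so $\mathcal N$ maps isomorphically onto a subspace of $\widehat{\E_0}$ and freeness follows. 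Injectivity of $\iota$ is then immediate: a kernel element would give a nontrivial right-linear relation among the $L_{\emptyset,v_a}$ over $\E_>$, contradicting freeness.

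The main obstacle I anticipate is the rigorous bookkeeping inside the completion $\hat\E_{\gge}$: because $L_{\emptyset,\emptyset}$ is an infinite product and the $R$-matrix lives in a completion, I must check that the ``leading $\E_0$-part'' projection is well defined and that the grading by principal degree is preserved under completion so that the free-module argument does not collapse across degrees. I would control this by noting that within each fixed principal degree $n$ only finitely many $h_r$ (with $r$ bounded in terms of homogeneous degree) contribute to a given homogeneous component, so the relevant matrix elements live in a locally finite-dimensional piece and the projection to the $\E_0$-leading term is legitimate. Once this finiteness is in place, the dimension count matches that of $Sh_{1,n}(u)$ via Proposition \ref{fock isom}, and the isomorphism is established.
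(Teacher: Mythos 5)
Your reduction of the lemma to freeness of $\mc S(u)$ as a right $\E_{>}$ module over $\mc N$ is a legitimate reformulation (given Corollary \ref{cyclic} it is essentially equivalent to the statement), and your use of $\iota(N)=\mc N$ is not circular since that identification follows from \eqref{sh q com} and \eqref{LL1} without injectivity. But the decisive step fails. Your ``leading $\E_0$-part'' is the image in $\hat\E_{\gge}/\hat\E_{\gge}\E_{>}'$, and you claim this image of $L_{\emptyset,v}$ is ``the corresponding Fock vector,'' hence that these images are linearly independent. In fact they are all zero for $\deg v>0$. The obstruction is the commutation relation used in the paper's proof, $e_nL_{\emptyset,\emptyset}=L_{\emptyset,\emptyset}\tilde e_n$ with $\tilde e_n=qe_n+(q-q^{-1})\sum_{j\ge1}u^{-j}e_{n+j}$: moving any element of $\E_{>}$ past $L_{\emptyset,\emptyset}$ produces an infinite tail of fixed principal degree and growing homogeneous degree. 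For example, $L_{\emptyset,h^\perp_{-r}\ket{\emptyset}}=h^\perp_{-r}L_{\emptyset,\emptyset}-q^rL_{\emptyset,\emptyset}h^\perp_{-r}$ with $h^\perp_{-r}\in\E_{>}$, and pushing $h^\perp_{-r}$ to the right gives $L_{\emptyset,\emptyset}$ times a convergent series in $\hat\E_{\gge}$ all of whose terms have positive principal degree. Since every element of $\E_{\gge}$ of positive principal degree lies in $\E_0\cdot\E_{>}'$ by the PBW decomposition, every $L_{\emptyset,v}$ with $\deg v>0$ lies in the closure of $\hat\E_{\gge}\E_{>}'$, so your projection kills all of $\mc N$ except the line through $L_{\emptyset,\emptyset}$. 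Your proposed finiteness repair (only finitely many $h_r$ per homogeneous component) addresses the $\E_0$ direction, but the divergence lives in the $\E_{>}$ direction: infinitely many $e_{n+j}$ occur at fixed principal degree, which is exactly why the naive PBW reading of the ``right $\E_{>}$-component'' collapses. If instead you quotient by the uncompleted ideal $\hat\E_{\gge}\E_{>}'$, the projection is no longer computable on the closure where the $L_{\emptyset,v}$ actually live, and you are back to the original problem.

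The paper's proof supplies precisely the injectivity mechanism your proposal lacks. Given $G=\sum_jF_j\ast\one\ast H_j$ in the kernel, one moves all left factors $a_j$ to the right using $e_n\mapsto\tilde e_n$, cancels the invertible element $L_{\emptyset,\emptyset}$ to get $\sum_j\tilde a_jb_j=0$ in the completion of $\E_{>}$, and then observes that $\prod_{i<j}(x_i-x_j)^2$ times the corresponding completed shuffle element is exactly the power series expansion at $x_1=\cdots=x_m=0$ of the rational function $\prod_{i<j}(x_i-x_j)^2\,G$ (the substitution $e_n\mapsto\tilde e_n$ encodes the expansion of $\phi(u,x)$ at $x=0$). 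A rational function with vanishing expansion is zero, so $G=0$. In other words, the infinite tails are not an obstacle to be projected away but the very data that identifies the kernel element with an expansion of itself; without some analogue of this expansion argument, a leading-term or dimension-count strategy cannot close the gap.
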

\begin{proof}
It suffices to show that $\iota$ is injective. 
Suppose that $G=\sum_j F_j\ast \one \ast H_j$ 
is in the kernel of \eqref{proj} where $F_j,H_j\in Sh_{0}$. 
Let $a_j,b_j\in \E_{>}$
be the elements corresponding to $F_j,H_j$.  
In the completion of $\E_{\gge}$
we have the commutation relation
\begin{align*}
e_n L_{\emptyset,\emptyset}=L_{\emptyset,\emptyset} \tilde{e}_n,
\quad 
\tilde{e}_n=
q e_n+(q-q^{-1})\sum_{j\ge1}u^{-j}e_{n+j}\,.
\end{align*}
Using this we move the $a_j$'s to the right and obtain
\begin{align*}
0=\sum_j a_jL_{\emptyset,\emptyset}b_j=
\sum_j L_{\emptyset,\emptyset}\tilde{a}_jb_j\,,
\end{align*}
where $\tilde{a}_j$ is obtained from $a_j$ by substituting 
$e_n$ by $\tilde{e}_n$.
Since $L_{\emptyset,\emptyset}$ is invertible, this implies that 
$\sum_j \tilde{a}_jb_j=0$. 
We may assume that $G$ has principal degree, say, $m$. 
Let $\tilde{G}$ be the element of the completion of $Sh_{0}$ 
corresponding to $\sum_j \tilde{a}_jb_j$.  
Then we observe that $\prod_{1\le i<j\le m}(x_i-x_j)^2\tilde{G}$ is nothing but the expansion of 
the rational function $\prod_{1\le i<j\le m}(x_i-x_j)^2 G$ at $x_1=\cdots=x_m=0$.
Hence we have $G=0$. 
\end{proof}

Then we have the identifications of spaces $N$, $\mc N$ and $\F(u)$.
\begin{lem}\label{L ident}
We have
$\iota(N)=\mc N\subset \mc S(u).$
For any $v\in\F(u)$, we have
\begin{align*}
\pi\iota^{-1}(L_{\emptyset,v})=v\in \F(u).
\end{align*}
\end{lem}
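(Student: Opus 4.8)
The plan is to prove the two assertions of Lemma~\ref{L ident} in sequence, using the map $\kappa$ (which defines $N$ via $q$-commutators \eqref{sh q com}) together with the commutation relation \eqref{LL1} on the $L$-operator side. The key observation is that these two structures are \emph{formally identical}: on the shuffle side, $N$ is spanned by iterated $q$-commutators $\kappa(h^\perp_{-r_1}\cdots h^\perp_{-r_k}\ket{\emptyset})$ built from the rule $\kappa(h^\perp_{-r}v)=[h^\perp_{-r},\kappa(v)]_{q^r}$ (here $[A,B]_{q^r}=AB-q^rB A$), while on the $\E_{\gge}$ side, \eqref{LL1} says exactly $L_{\emptyset,h^\perp_{-r}v}=[h^\perp_{-r},L_{\emptyset,v}]_{q^r}$, the same recursion with the same bracket. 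Since $\iota$ is an $\E_>$-bimodule map (by Lemma~\ref{iso-L}) and $h^\perp_{-r}\in\E_>$ for $r>0$, it intertwines the bimodule $q$-commutator on $Sh_1(u)$ with that on $\mc S(u)$.

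First I would establish $\iota(N)\subseteq\mc N$ by induction on the number of Heisenberg generators. The base case is $\iota(\one)=L_{\emptyset,\emptyset}=L_{\emptyset,\ket{\emptyset}}$, which is \eqref{proj} together with the definition of $\mc N$. For the inductive step, take $v\in\F(u)$ with $\iota(\kappa(v))=L_{\emptyset,v}$ already known. Applying $\iota$ to \eqref{sh q com} and using that $\iota$ is a bimodule homomorphism,
\begin{align*}
\iota(\kappa(h^\perp_{-r}v))
=\iota\bigl([h^\perp_{-r},\kappa(v)]_{q^r}\bigr)
=[h^\perp_{-r},\iota(\kappa(v))]_{q^r}
=[h^\perp_{-r},L_{\emptyset,v}]_{q^r}
=L_{\emptyset,h^\perp_{-r}v},
\end{align*}
where the last equality is \eqref{LL1}. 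Since $\F(u)$ is irreducible over the Heisenberg algebra generated by $\{h^\perp_{-r}\}_{r>0}$, vectors of the form $h^\perp_{-r}v$ span $\F(u)$, so this shows both the inclusion and the stronger statement $\iota(\kappa(v))=L_{\emptyset,v}$ for all $v$. The reverse inclusion $\mc N\subseteq\iota(N)$ then follows because every $L_{\emptyset,v}$ arises this way, and $\iota$ is an isomorphism (Lemma~\ref{iso-L}), giving equality $\iota(N)=\mc N$.

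For the second assertion, I would combine the identity $\iota(\kappa(v))=L_{\emptyset,v}$ just established with the fact that $\pi\kappa=\id$ on $\F(u)$, which was noted right after the definition of $\kappa$ in Section~\ref{L sec}. Indeed, $\iota^{-1}(L_{\emptyset,v})=\kappa(v)$ by injectivity of $\iota$, whence $\pi\iota^{-1}(L_{\emptyset,v})=\pi\kappa(v)=v$, as desired.

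The only delicate point is making sure the inductive argument is well-posed: the vector $h^\perp_{-r}v$ does not determine $v$ uniquely, so one must check that the recursion \eqref{sh q com} and the relation \eqref{LL1} are genuinely \emph{consistent}, i.e. that $\kappa$ and $v\mapsto L_{\emptyset,v}$ are each well-defined $\C$-linear maps out of all of $\F(u)$ rather than merely defined on generators. For $\kappa$ this is exactly the content of the remark following \eqref{subspace N} (uniqueness from cyclicity of $\F(u)$ over the Heisenberg subalgebra), and for $v\mapsto L_{\emptyset,v}$ it holds by construction since $L_{\emptyset,v}=(1\otimes\bra{\emptyset})\cR'(1\otimes v)$ is manifestly linear in $v$. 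Given this, the two maps agree on a spanning set and hence everywhere, so I expect no real obstacle beyond bookkeeping; the substance of the lemma is entirely carried by the structural match between \eqref{sh q com} and \eqref{LL1}, which is precisely why that particular splitting of the sequence was chosen, as foreshadowed by the reference to \eqref{LL1} in Section~\ref{L sec}.
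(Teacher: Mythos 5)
Your proposal is correct and matches the paper's own argument: the paper likewise proves the lemma by the base case $\pi\iota^{-1}(L_{\emptyset,\emptyset})=\ket{\emptyset}$, the cyclicity of $\F(u)$ over the $h^\perp_{-r}$'s, the $\E_>$-linearity of $\iota$, and the exact structural match between the defining recursion \eqref{sh q com} of $\kappa$ and the $q$-commutator relation \eqref{LL1} (which the paper reads modulo the right action killed by $\pi$). Your only slip is calling $\F(u)$ \emph{irreducible} over the commutative algebra generated by $\{h^\perp_{-r}\}_{r>0}$ --- it is cyclic (indeed free of rank one) over it, which is exactly what the induction needs --- and your consistency check for $\kappa$ is precisely the well-definedness remark the paper makes after \eqref{sh q com}.
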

\begin{proof}
We have $\pi\iota^{-1} (L_{\emptyset,\emptyset})=\ket{\emptyset}\in \F(u)$.
The module 
$\F(u)$ is cyclic with respect to the action of 
the $h^{\perp}_{-r}$'s.
Formula \eqref{LL1} implies that 
$h^\perp_{-r}L_{\emptyset,v}\equiv L_{\emptyset,h^\perp_{-r}v}$   
holds modulo right action.
Since the map $\iota$ is $\E_{>}$ linear and $h^\perp_{-r}\in \E_{>}$,  
 we obtain the assertion. 
\end{proof}

We capture various maps on Figure \ref{Fig 3}.

\begin{figure}
\begin{picture}(300,00)(-80,30)
\put(0,0){$Sh_1(u)$}
\put(100,0){$\mc S(u)\subset\hat\E_\gge$}
\put(13,-17){\rotatebox{90}{$\subset$}}
\put(11,-30){$N$}
\put(12,-43){\rotatebox{90}{$\in$}}
\put(12,-55){$\one$}
\put(107,-17){\rotatebox{90}{$\subset$}}
\put(105,-30){$\mc N$}
\put(107,-43){\rotatebox{90}{$\in$}}
\put(103,-56){$L_{\emptyset,\emptyset}$}
\put(61,-82){$\F(u)$}
\put(67,-97){\rotatebox{90}{$\in$}}
\put(65,-112){$\ket\emptyset$}
\put(-52,0){$J_p\oplus N\simeq$}
\put(-30,-8){\vector(2,-1){35}}
\put(33,-8){\vector(1,-2){30}}
\put(55,-75){\vector(-2,3){28}}
\put(44,3){\vector(1,0){45}}
\put(33,-27){\vector(1,0){64}}
\put(62,9){$\iota$}
\put(59,-7){$\simeq$}
\put(59,-37){$\simeq$}
\put(30,-57){$\kappa$}
\put(35,-67){$\simeq$}
\put(-22,-25){$\pi_p$}
\put(40,-45){$\pi$}
\end{picture}
\vskip150pt
\caption{Subspaces isomorphic to Fock space}\label{Fig 3}
\end{figure}

\section{Bethe ansatz}\label{Bethe sec}
\subsection{Integrals of motion}
Let $W$ be a bounded quasi-finite module. We have $W=\oplus_{n\in\Z}W_n$, and
$d^\perp|_{V_n}=n$.
Fixing a parameter $p\in \C^\times$, consider the weighted trace 
\begin{align*}
T_{W}(u;p)=\Tr_{W(u),2} \left(p^{1\otimes d^\perp} \cR'\right)
=\sum_{n\in\Z} (p q^{-c^\perp})^n \,
\Tr_{W(u)_n,2} \left(q^{-d^\perp\otimes c^\perp}\cR^{(1)}\cR^{(2)}\right)\,
,
\end{align*}
where $\cR'$ is defined in Subsection \ref{L operator subset},
and $\Tr_{W(u),2}$ signifies 
the trace in the second tensor component.

Note that the first tensor component of $\cR^{(1)}$ \eqref{R0}
has the homogeneous degree $0$, and that of $\cR^{(2)}$ \eqref{R1}
has non-negative homogeneous degree. Therefore, the operator
$T_{W}(u;p)$ has the form $\sum_{l=0}^\infty T_{W,l}(p)u^{-l}$,
where $T_{W,l}(p)$ is an operator on bounded quasi-finite modules and $\deg T_{W,l}(p)=(0,l)$. 

We introduce the integrals of motion $\{I_{W,l}(p)\}_{n=1}^\infty$ by 
\begin{align*}
\log \left(T_{W,0}(p)^{-1}T_{W}(u;p)\right)
=\sum_{n=1}^\infty I_{W,l}(p) u^{-l}\,.
\end{align*}

For fixed $p$, the integrals of motions form a commutative family:
\begin{align*}
[T_{W_1,l_1}(p),T_{W_2,l_2}(p)]=0,
\end{align*}
for all $l_1,l_2\in\Z_{>0}$ and all bounded quasi-finite modules $W_1,W_2$.

\medskip

When $W(u)$ is the Fock module $\F(u)$, we have the following expression
for $I_{\F,1}(p)$. 

\begin{lem}\label{int expl}
Set $\tilde{p}=pq^{-c^\perp}$. Then the operator $I_{\F,1}(p)$ is given by 
\begin{align}
&I_{\F,1}(p)=k \cdot \tilde{e}^\perp_0(p),\quad
k=\frac{(q_1q_3,\tilde{p};\tilde{p})_\infty}
{(\tilde{p}q_1,\tilde{p}q_3;\tilde{p})_\infty}\,,
\label{first-int}
\end{align}
where 
$\tilde{e}^\perp_0(p)$ is the coefficient of $z^0$ of the twisted current
\begin{align}
e^\perp(z;p)=
e^\perp(z)\prod_{j=1}^\infty\psi^{+,\perp}\bigl(\tilde{p}^{-j}q^{-1}z\bigr)\,,
\label{e0p}
\end{align}
and $(a_1,\cdots,a_m;\tilde{p})_\infty=\prod_{j=1}^m(a_j;\tilde{p})_\infty$, 
$(a;\tilde{p})_\infty=\prod_{k=0}^\infty(1-a \tilde{p}^k)$. 
\end{lem}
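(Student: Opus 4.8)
The plan is to compute $I_{\F,1}(p)$ directly from the defining relation $\log\bigl(T_{\F,0}(p)^{-1}T_{\F}(u;p)\bigr)=\sum_{l\ge1}I_{\F,l}(p)u^{-l}$ by extracting the coefficient of $u^{-1}$. Since the logarithm's linear term in $u^{-1}$ equals the linear term of $T_{\F,0}(p)^{-1}T_\F(u;p)$ itself, I would first isolate the part of the weighted trace $T_\F(u;p)=\Tr_{\F(u),2}\bigl(p^{1\otimes d^\perp}\cR'\bigr)$ that has homogeneous degree $1$ in the first tensor component, i.e. the term $T_{\F,1}(p)$. By the structure \eqref{univR}--\eqref{R1}, the degree-one contribution comes entirely from $\cR^{(2)}=1+\kappa_1\sum_i e^\perp_i\otimes f^\perp_{-i}+\cdots$, whose first component $e^\perp_i$ has homogeneous degree $1$; the factor $\cR^{(1)}$ from \eqref{R0} contributes only to the trace in the second component through the $\hp_{\pm r}$ bosons. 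So I expect $T_{\F,1}(p)$ to be of the form $\kappa_1\sum_i e^\perp_i\cdot\Tr_{\F(u),2}\bigl(p^{d^\perp}q^{-d^\perp\otimes c^\perp}\cR^{(1)}\,f^\perp_{-i}\bigr)$, a single perpendicular current $e^\perp(z)$ dressed by a scalar bosonic trace.

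Next I would evaluate that bosonic trace. Using the explicit vertex-operator form \eqref{VO1}--\eqref{VO2} of $\ep(z),\fp(z)$ on $\F(u)$ together with $\cR^{(1)}=\exp\bigl(\sum_{r\ge1}r\kappa_r\hp_r\otimes\hp_{-r}\bigr)$ from \eqref{R0}, the trace over $\F(u)$ in the second component is a standard Heisenberg-algebra trace: a weighted trace of a product of exponentials of bosons against $\tilde p^{N}$, where $N$ is the number operator counting $\hp$-excitations (note $d^\perp$ restricted to $\F(u)$ measures precisely this principal degree). Such traces are computed by the bosonic Wick/normal-ordering formula, giving an infinite product in $\tilde p=pq^{-c^\perp}$. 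I expect the exponential-of-bosons factors, when normal ordered against each other and traced, to produce exactly the shift $\prod_{j\ge1}\psi^{+,\perp}(\tilde p^{-j}q^{-1}z)$ appearing in \eqref{e0p}, while the purely scalar part of the trace assembles into the constant $k=(q_1q_3,\tilde p;\tilde p)_\infty/(\tilde pq_1,\tilde pq_3;\tilde p)_\infty$. Concretely, the contraction of $e^\perp(z)$ with the geometric tower of $\hp_{-r}$'s coming from expanding $\cR^{(1)}$ and resumming over the trace produces the product of $\psi^{+,\perp}$ factors, and the self-contraction / zero-mode contribution produces $k$.

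Then I would identify the coefficient of $z^0$ in $e^\perp(z;p)$ with $\tilde e^\perp_0(p)$ and conclude $I_{\F,1}(p)=k\cdot\tilde e^\perp_0(p)$, matching \eqref{first-int}. The main obstacle, I anticipate, is the careful bosonic trace computation: one must track the $\tilde p$-weighting by $d^\perp$ correctly, handle the geometric resummation $\sum_{j\ge0}\tilde p^{jr}$ that converts a single $\hp$-contraction into the infinite tower of $\psi^{+,\perp}$ shifts, and correctly collect all scalar prefactors (the $\kappa_r$'s, the $q$-powers from $q^{-d^\perp\otimes c^\perp}$, and the normal-ordering constants) into the compact infinite-product form of $k$. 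The convergence and reorganization of these products in $\tilde p$ is where the bookkeeping is delicate; everything else is a formal consequence of the coproduct \eqref{copro1} and the $R$-matrix factorization. I would finally verify the claimed constant $k$ by comparing the $z^0$-coefficient's overall normalization against the known $q$-shifted factorial identities, which fixes the prefactor unambiguously.
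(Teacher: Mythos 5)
Your proposal follows essentially the same route as the paper's proof: you extract $I_{\F,1}(p)=T_{\F,0}(p)^{-1}T_{\F,1}(p)$ from the linear term of the logarithm, isolate the degree-one part $\kappa_1\sum_i e^\perp_i\otimes f^\perp_{-i}$ of $\cR^{(2)}$, substitute the vertex-operator realization \eqref{VO2} for $f^\perp(z)$, evaluate the $\tilde p$-weighted Heisenberg trace by the standard exponential formula, and recognize the geometric resummation $\sum_{j\ge1}\tilde p^{jr}$ as the product $\prod_{j\ge1}\psi^{+,\perp}(\tilde p^{-j}q^{-1}z)$ dressing $e^\perp(z)$, with the scalar factors assembling into $k$ (via $q_2^{-1}=q_1q_3$). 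The one imprecision is cosmetic: the $\hp_r\otimes 1$ factors coming from $\cR^{(1)}$ survive the trace as multiplicative operators in the first tensor slot (the paper's $A_r=r\kappa_r \hp_r\otimes 1-\frac{q^r\kappa_r}{1-q^{2r}}z^r$) rather than as contractions against $e^\perp(z)$, but this is exactly the bookkeeping you flag and the paper carries out.
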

\begin{proof}
From the definition of $T_{V}(u,p)$ along with \eqref{R0}, \eqref{R1}, 
we obtain 
\begin{align*}
&T_{\F,0}(p)=
q^{-d^\perp}\Tr_{\F(u),2} \left[\tilde{p}^{1\otimes d^\perp}
\cR^{(1)}\right]\,,
\\
&T_{\F,1}(p)=q^{-d^\perp}\kappa_1
\res_{z=0}\Tr_{\F(u),2} \left[\tilde{p}^{1\otimes d^\perp}
\cR^{(1)}\ \cdot 1\otimes 
f^\perp(z)\right] e^\perp(z)\frac{dz}{z}\,.
\end{align*}
We then substitute \eqref{VO2} for $f^\perp(z)$.
The trace can be calculated by using 
\begin{align*}
&\Tr_{\F(u),2}\left[\tilde{p}^{1\otimes d^\perp}
\exp\Bigl(\sum_{r=1}^\infty A_r1\otimes h^\perp_{-r}\Bigr) 
\exp\Bigl(\sum_{r=1}^\infty B_r1\otimes h^\perp_{r}\Bigr)\right]
\\
&=\frac{1}{(\tilde{p};\tilde{p})_\infty}
\exp\left(\sum_{r=1}^\infty A_rB_r\frac{\tilde{p}^r}{1-\tilde{p}^r}
\frac{q^r-q^{-r}}{r\kappa_r}\right)\,,
\end{align*}
where we set 
\begin{align*}
&A_r=r\kappa_r h^\perp_r\otimes 1-\frac{q^r\kappa_r}{1-q^{2r}}z^r\,,
\quad B_r=-\frac{q^{2r}\kappa_r}{1-q^{2r}}z^{-r}\,.
\end{align*}
Note that
\begin{align*}
\exp\Bigl(\sum_{r=1}^\infty \kappa_r\frac{\tilde{p}^rq^r}{1-\tilde{p}^r}
h^\perp_r z^{-r}\Bigr) 
=\prod_{j\ge1}\psi^{+,\perp}\bigl(\tilde{p}^{-j}q^{-1}z\bigr)\,.
\end{align*}
After simplification we find
\begin{align*}
&T_{\F,0}=q^{-d^\perp}\frac{1}{(\tilde{p};\tilde{p})_\infty}\,,
\quad
T_{\F,1}=u^{-1}q^{-d^\perp}\frac{(q_2^{-1};\tilde{p})_\infty}
{(\tilde{p}q_1,\tilde{p}q_3;\tilde{p})_\infty}
\tilde{e}_0^\perp(p)\,.
\end{align*}
The lemma follows. 
\end{proof}

More generally, if $W$ is a tensor product of several 
Fock modules,  
the operators $\{I_{W,n}(p)\}_{n=1}^\infty$ are 
closely related to the commutative family of operators 
introduced and studied in \cite{FKSW}, \cite{FKSW2}, \cite{KS}.

\subsection{Action of $\tilde e_0^\perp(p)$ on Fock module as a projection}
In this subsection we identify the action of the integral of motion
$\tilde e_0^\perp(p)$ on the Fock module with a projection of operator
$h_1$ acting in $Sh_{1}(u)$ to $N$, along the space $\J_p$.

We fix $p\in \C^{\times}$ and consider the subspace of $Sh_{1}(u)$
\begin{align*}
&\J_p=\mathrm{Span}_{\C}
\{G\ast F-p^nF\ast G
\mid G\in Sh_{1}(u), F\in Sh_{0,n}, n\ge 1\}\subset Sh_{1}(u)\,.
\end{align*}
Unlike \eqref{J0}, it is not an $Sh_{0}$ submodule. 
However, it is clearly preserved by the action of $h_r$, see
\eqref{h act}.
From (\ref{L plus}), we see that for generic $p$,
we have a direct sum of vector spaces
\begin{align}\label{L p plus}
Sh_{1}(u)=\J_p\oplus N.
\end{align}
Denote $\pi_p: \ Sh_{1}(u) \to N$ the projection operator in (\ref{L p plus}) along the first summand.

Recall that for $G(x_1,\dots,x_n)\in Sh_{1,n}(u)$, the action of $h_1$ is simply given by
\begin{align}\label{h1 act}
h_1 G(x_1,\dots,x_n)=
(-\sum_{i=1}^n x_i+\gamma_1)\,G(x_1,\dots,x_n),
\end{align}
see (\ref{h act}). The crucial observation is that the projection of this simple operator to $N$
along $\J_p$ produces the desired integral of motion $\tilde e_0^\perp(p)$.

\begin{thm}\label{main} Under the identification of $N$ and $\F(u)$, we have $\pi_p h_1=\tilde e_0^\perp(p)$.
In other words, for any $v\in\F(u)$ we have
\begin{align*}
\tilde e_0^\perp(p) v=(\kappa^{-1}\circ \pi_p) (h_1 \kappa(v)).
\end{align*}
\end{thm}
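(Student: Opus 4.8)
The plan is to transport the whole statement to the completion $\hat\E_{\gge}$ via the isomorphism $\iota$ of Lemma \ref{iso-L}, where the three incarnations of the Fock space become transparent. Recall from Lemma \ref{L ident} (and the $q$-commutator \eqref{sh q com} matched against \eqref{LL1}) that $\iota\kappa(v)=L_{\emptyset,v}$, so that $N$ is carried isomorphically onto $\mc N=\on{Span}\{L_{\emptyset,v}\}$. Under $\iota$ the right $\E_{>}$ action becomes right multiplication and the left $\E_{>}$ action becomes left multiplication, hence $\iota(\J_p)$ is exactly the span of the $p$-twisted commutators $X a-p^{\,\mathrm{pdeg}\,a}\,a X$ with $X\in\mc S(u)$ and $a\in\E_{>}$ of positive principal degree. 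Since $h_1$ acts on $Sh_1(u)$ by multiplication by the symmetric function $-\sum_i x_i+\gamma_1$ (see \eqref{h1 act}), it preserves $\J_p$, so it descends to the quotient $Sh_1(u)/\J_p\simeq N$; by \eqref{L p plus} this descended operator is precisely $\kappa^{-1}\circ\pi_p\circ h_1\circ\kappa$. Thus it suffices to compute, modulo $\iota(\J_p)$, the image of $h_1$ on $\mc N$ and to identify it with $\tilde e_0^\perp(p)$ as given by \eqref{first-int}--\eqref{e0p}.

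First I would work out the translation formula for $\iota(h_1\kappa(v))$. The one delicate point is that $h_1=\ep_0$ has principal degree $0$ and is not in $\E_{>}$, so its module action is \emph{not} left multiplication in $\hat\E_{\gge}$; rather, the transported operator $\hat H=\iota\circ h_1\circ\iota^{-1}$ is the unique operator that commutes with right multiplication by $\E_{>}$, obeys the Leibniz rule $\hat H(aX)=a\hat H(X)+[h_1,a]X$ for $a\in\E_{>}$, and sends $L_{\emptyset,\emptyset}$ to $\gamma_1 L_{\emptyset,\emptyset}$. Feeding in $[h_1,\hp_{-r}]\propto\ep_{-r}$ together with \eqref{LL1}, an induction on $v$ along the $\hp_{-r}$-cyclicity of $\F(u)$ (equivalently, reading \eqref{LL3} off the coproduct \eqref{copro1} of $\ep_0$) yields
\begin{align*}
\iota(h_1\kappa(v))=L_{\emptyset,\ep_0 v}+\sum_{j\ge1}L_{\emptyset,\psi^{+,\perp}_j v}\,\ep_{-j}\,.
\end{align*}
Modulo $\J_0$ the tail lies in $\iota(\J_0)$ and this recovers the $p=0$ identity $\pi_0 h_1\kappa(v)=\kappa(\ep_0 v)$, consistent with $\tilde e_0^\perp(0)=\ep_0$; this is a useful sanity check.

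Next I would reduce the tail modulo $\iota(\J_p)$. Each $\ep_{-j}$ lies in $\E_{>}$ with $\mathrm{pdeg}=j$, so $L_{\emptyset,w}\,\ep_{-j}\equiv p^{j}\,\ep_{-j}L_{\emptyset,w}$ modulo $\iota(\J_p)$, and then \eqref{LL2} moves $\ep_{-j}$ back inside the $L$-operator, producing a self-term $q^{-j}L_{\emptyset,w}\,\ep_{-j}$ and deeper tails carrying extra $\psi^{+,\perp}_k$. Writing $\Phi_j(w)=\kappa^{-1}\pi_p(\ep_{-j}L_{\emptyset,w})$ and using $\tilde p=pq^{-\cp}=pq^{-1}$ on $\F(u)$, the self-term resums geometrically and gives
\begin{align*}
(1-\tilde p^{\,j})\,\Phi_j(w)=\ep_{-j}w+q^{-j}\sum_{k\ge1}p^{\,j+k}\,\Phi_{j+k}(\psi^{+,\perp}_k w)\,,
\end{align*}
whence $\kappa^{-1}\pi_p h_1\kappa(v)=\ep_0 v+\sum_{j\ge1}p^{\,j}\Phi_j(\psi^{+,\perp}_j v)$. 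The factor $(1-\tilde p^{\,j})^{-1}$ already matches the shape $\tilde p^{\,r}/(1-\tilde p^{\,r})$ appearing in the trace computation of Lemma \ref{int expl}.

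The main obstacle is the resummation of this recursion. I expect to prove, by a generating-function argument in the $\mathrm{hdeg}$-completion $\hat\E_{\gge}$, that the nested insertions of $\psi^{+,\perp}_k$ together with the accumulated weights and the geometric factors $(1-\tilde p^{\,j})^{-1}$ assemble into the ordered product $\prod_{m\ge1}\psi^{+,\perp}(\tilde p^{-m}q^{-1}z)$, so that $\ep_0 v+\sum_{j\ge1}p^{\,j}\Phi_j(\psi^{+,\perp}_j v)$ equals the $z^0$-coefficient of $\ep(z)\prod_{m\ge1}\psi^{+,\perp}(\tilde p^{-m}q^{-1}z)$ acting on $v$, i.e. $\tilde e_0^\perp(p)v$ by \eqref{e0p}. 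Controlling convergence of the iteration and matching the powers of $p$ and $q$ with the exponential form of $\prod_m\psi^{+,\perp}$ recorded in the proof of Lemma \ref{int expl} is the crux; every other step is bookkeeping of the commutation relations already established in Lemma \ref{LL-comm}.
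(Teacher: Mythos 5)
Your proposal follows, in substance, the paper's own proof: the paper too passes everything through $\iota$ into $\mc S(u)\subset\hat\E_{\gge}$, starts from $h_1L_{\emptyset,v}=[\ep_0,L_{\emptyset,v}]+\gamma_1L_{\emptyset,v}=L_{\emptyset,\ep_0v}+\sum_{j\ge1}L_{\emptyset,\psi^{+,\perp}_jv}\,\ep_{-j}$ (i.e.\ \eqref{LL3}), swaps $L_{\emptyset,w}\,\ep_{-j}\equiv p^{j}\,\ep_{-j}L_{\emptyset,w}\bmod\J_p$ using $\mathrm{pdeg}\,\ep_{-j}=j$, and iterates \eqref{LL2}. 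Two corrections are needed. First, your characterization of the transported operator $\hat H=\iota\circ h_1\circ\iota^{-1}$ is wrong as stated: it does \emph{not} commute with right multiplication by $\E_{>}$. Since $[h_1,L_{\emptyset,\emptyset}]=0$ by \eqref{h-L}, the transported operator is $X\mapsto[h_1,X]+\gamma_1X$, a derivation on \emph{both} sides; for instance $\hat H(L_{\emptyset,\emptyset}\hp_{-r})$ contains the term $L_{\emptyset,\emptyset}[h_1,\hp_{-r}]$ with $[h_1,\hp_{-r}]\propto\ep_{-r}\neq0$. With your three conditions one would get $\hat H(aL_{\emptyset,\emptyset}b)=[h_1,a]L_{\emptyset,\emptyset}b+\gamma_1\,aL_{\emptyset,\emptyset}b$, losing $aL_{\emptyset,\emptyset}[h_1,b]$; you must add the right-sided Leibniz rule $\hat H(Xb)=\hat H(X)b+X[h_1,b]$. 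Fortunately the formula you then display for $\iota(h_1\kappa(v))$ is the correct one (it is exactly \eqref{LL3}), so the slip does not propagate.

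Second, the step you defer as ``the crux'' is precisely where the paper's proof simply ends, and it is mechanical, not delicate: your recursion for $\Phi_j$ already contains everything once you expand $(1-\tilde p^{\,j})^{-1}=\sum_{m\ge0}\tilde p^{\,jm}$ --- equivalently, keep the $j=0$ self-terms in the iteration, noting $\psi^{+,\perp}_0=C$ acts as $1$ on $\F(u)$. Iterating then gives
\begin{align*}
\kappa^{-1}\pi_p\bigl(h_1\kappa(v)\bigr)
=\ep_0v+\sum_{\genfrac{}{}{0pt}{}{k\ge1}{j_k,\cdots,j_2\ge0,\,j_1\ge1}}
q^{j_1+\cdots+j_k}\,\tilde p^{\,j_k+2j_{k-1}+\cdots+kj_1}\,
\ep_{-j_1-\cdots-j_k}\psi^{+,\perp}_{j_k}\cdots\psi^{+,\perp}_{j_1}v\,,
\end{align*}
with $\tilde p=pq^{-1}$, and since the coefficient of $z^{-j}$ in $\psi^{+,\perp}(\tilde p^{-m}q^{-1}z)$ is $\tilde p^{\,mj}q^{j}\psi^{+,\perp}_j$, this sum is term by term the $z^0$ coefficient of $e^\perp(z)\prod_{m\ge1}\psi^{+,\perp}(\tilde p^{-m}q^{-1}z)$ applied to $v$, i.e.\ $\tilde e^\perp_0(p)v$ by \eqref{e0p}. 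No generating-function machinery or convergence analysis is required: $\psi^{+,\perp}_k$ with $k\ge1$ strictly lowers the principal degree of $v$, which is bounded below, so only finitely many nontrivial $\psi$-insertions occur, while the $j=0$ insertions contribute only geometric series in $\tilde p$, summable for generic $p$ consistently with the decomposition \eqref{L p plus}.
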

\begin{proof}
We use the isomorphism $\iota$, see \eqref{proj}
and Lemma \ref{L ident}. We 
work in $\mc S(u)\subset \hat\E_\gge$ and
make use of the matrix elements of $L$ operators to compute the projection.

By \eqref{LL2} and \eqref{LL3} we have
\begin{align*}
[e_0^\perp, L_{\emptyset,v}]+\gamma_1 
L_{\emptyset,v}
&=L_{\emptyset,e_0^\perp v}
+\sum_{j\ge1}L_{\emptyset,\psi^{+,\perp}_j v}\cdot e^\perp_{-j}
\\
&\equiv L_{\emptyset,e_0^\perp v}
+\sum_{j\ge1}p^j e^\perp_{-j}\cdot L_{\emptyset,\psi^{+,\perp}_j
v}\quad \bmod \J_p\,,
\end{align*}
and for $n>0$
\begin{align*}
e_{-n}^\perp L_{\emptyset,v}
&=L_{\emptyset,e_{-n}^\perp v}
+q^{-n}\sum_{j\ge 0}L_{\emptyset,\psi^{+,\perp}_j v}\cdot e^\perp_{-j-n}
\\
&\equiv L_{\emptyset,e_{-n}^\perp v}
+\sum_{j\ge 0}q^{-n}p^{j+n} e^\perp_{-j-n}\cdot 
L_{\emptyset,\psi^{+,\perp}_j v}\quad \bmod \J_p\,.
\end{align*}
Iterating the latter, we obtain 
\begin{align*}
h_1 L_{\emptyset,v}
&=
[e_0^\perp, L_{\emptyset,v}]+\gamma_1
L_{\emptyset,v}
\\
& 
\equiv L_{\emptyset,e_0^\perp v}+\hspace{-15pt}\sum_{\genfrac{}{}{0pt}{}{k\ge1}{j_k,\cdots,j_2\ge0,j_1\ge1}}\hspace{-15pt}q^{j_k+\cdots+j_1}(pq^{-1})^{j_k}\cdots(pq^{-1})^{kj_1}
L_{\emptyset,e^\perp_{-j_k-\cdots-j_1}\psi^{+,\perp}_{j_k}\cdots\psi^{+,\perp}_{j_1}v}
\quad  \bmod \J_p\,.\\
&=L_{\emptyset,\tilde{e}^\perp_0(p)v}\,,
\end{align*}
where we used definition \eqref{e0p}. 
\end{proof}

\subsection{Bethe ansatz}
Theorem \ref{main} immediately leads to Bethe ansatz statements for the dual module.

Given a quasi-finite left $\E'$ module $V=\oplus_{n=0}^\infty V_n$, 
we consider the graded dual space $V^*=\oplus_{n=0}^\infty\on{Hom}(V_n,\C)$. 
As usual, $V^*$ is a right $\E'$ module with action given by $g f(v)=f(gv)$, $g\in\E'$, $v\in V$, $f\in V^*$. 
Note that the spectrum of any operator and of the dual operator coincide. 
Note also that the dual to a lowest weight Fock left module is a highest weight Fock right module.

For a point $a=(a_1,\cdots,a_n)\in \C^n$ such that $a_i\neq a_j$ ($i\neq j$) and $a_i\neq u$, 
we denote by $ev_a$ the evaluation map $ev_a:\,Sh_{1}(u)\to \C$ defined by
\begin{align*}
&ev_a(F(x_1,\dots,x_n))= F(a_1,\cdots,a_n), \qquad F(x_1,\dots,x_n)\in Sh_{1,n}(u)\,,
\end{align*}
and $ev_a (Sh_{1,m}(u))=0$ for $m\neq n$. 

\begin{lem}\label{Bethe}
We have $ev_a (\J_p)=0$ if and only if $a=(a_1,\cdots,a_n)$ satisfies the {\it Bethe equation} 
\begin{align}
1=q^{-1}p\cdot \frac{a_i-q_2 u}{a_i-u}
\prod_{j(\neq i)}\frac{(a_j-q_1a_i)(a_j-q_2a_i)(a_j-q_3a_i)}
{(a_j-q_1^{-1} a_i)(a_j-q_2^{-1} a_i)(a_j-q_3^{-1} a_i)}\,,
\quad i=1,\cdots,n\,.
\label{BAE}
\end{align}
\end{lem}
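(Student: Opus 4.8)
The plan is to characterize when the evaluation functional $ev_a$ annihilates the space $\J_p$ by testing it against the spanning elements $G\ast F - p^n F\ast G$ with $G\in Sh_{1}(u)$ and $F\in Sh_{0,n}$. Since $ev_a$ selects the component in $Sh_{1,n}(u)$, for a fixed point $a=(a_1,\dots,a_n)$ it suffices to consider $G=\one\in Sh_{1,0}(u)$ and $F\in Sh_{0,n}$ of total degree $n$; the products $\one\ast F$ and $F\ast \one$ then both land in $Sh_{1,n}(u)$, and I expect these particular relations to generate the relevant constraints because $Sh_{0}$ is generated by $Sh_{0,1}$ (Proposition \ref{prop:sigma}). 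First I would write out both shuffle products explicitly from the bimodule formulas in Section \ref{shuffle sec}: the product $\one\ast F$ carries the extra factor $\prod_i\phi(u,x_i)$ coming from the left action, whereas $F\ast\one$ carries only the symmetrization weights $\omega$.

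The next step is to evaluate the symmetrized expression $ev_a(\one\ast F - p^n F\ast \one)$ at the point $a$. Because $\Sym$ runs over $\GS_n$, each summand corresponds to a permutation distributing the variables $a_1,\dots,a_n$ between the $F$-slot and the shuffle factors; I would extract, for each $i$, the term in which $a_i$ plays the distinguished role created by the pole structure of $Sh_{1}(u)$ at $x_i=u$. The key point is that the factors $\omega(x,y)=g(x,y)/(x-y)^3$ produce exactly the ratio $g(a_j,a_i)/g(a_i,a_j)$ appearing in \eqref{BAE} when one compares the $\omega(x_{m+j},x_i)$ weight in $\one\ast F$ against the $\omega(x_i,x_{m+j})$ weight in $F\ast\one$. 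Combining this with the factor $\phi(u,a_i)=(q^{-1}-qu/a_i)/(1-u/a_i)=q^{-1}(a_i-q_2u)/(a_i-u)$ from the left action, and the overall $p^n$ versus the single $p^{-1}$ per index that results after clearing the symmetric sum, should reproduce precisely the Bethe equation.

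The main technical step, and the one I expect to be the real obstacle, is showing that the vanishing of $ev_a$ on this family of elements is \emph{equivalent} to the $n$ scalar equations \eqref{BAE}, rather than merely implied by them. The ``if'' direction—that the Bethe equations force $ev_a(\J_p)=0$—should follow by a direct symmetrization argument once the weight bookkeeping is done. For the ``only if'' direction I would argue that, since $Sh_{0}$ is generated in degree one and $ev_a$ is multiplicative in the appropriate sense, it is enough that $ev_a$ vanish on the generators $G\ast x^k - p\, x^k\ast G$; testing against a suitable basis of Laurent monomials $x^k$ (for instance $k=0,1,\dots,n-1$) yields a linear system in the $n$ quantities whose nondegeneracy (a Vandermonde-type argument in the distinct points $a_1,\dots,a_n$) forces each of the $n$ residue-type conditions separately. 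The care needed here is to confirm that the wheel conditions \eqref{wheel1}--\eqref{wheel2} do not remove degrees of freedom and that the $a_i$ being pairwise distinct and $\neq u$ guarantees the relevant determinant is nonzero, so that collectively the equations are both necessary and sufficient.
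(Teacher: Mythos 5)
Your opening reduction is where the argument genuinely breaks. With $G=\one\in Sh_{1,0}(u)$ and $F\in Sh_{0,n}$, the two shuffle products contain \emph{no} cross factors $\omega$ at all: since one factor has zero variables, the definitions give $\one\ast F=F$ and $F\ast\one=F\cdot\prod_{i=1}^n\phi(u,x_i)$. Hence
\begin{align*}
ev_a\bigl(\one\ast F-p^{n}F\ast\one\bigr)=F(a)\Bigl(1-p^{n}\prod_{i=1}^n\phi(u,a_i)\Bigr),
\end{align*}
and vanishing on this family yields only the \emph{single} scalar condition $p^{n}\prod_{i=1}^n\phi(u,a_i)=1$, which is exactly the product of the $n$ equations \eqref{BAE} (the ratios $\omega(a_j,a_i)/\omega(a_i,a_j)$ cancel in pairs when multiplied over all $i$) and is strictly weaker than \eqref{BAE} itself. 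The per-index ratios $g(a_j,a_i)/g(a_i,a_j)$ that you invoke in your second paragraph arise only when \emph{both} factors carry variables, i.e.\ for $F\in Sh_{0,m}$ with $1\le m\le n-1$ and $G\in Sh_{1,n-m}(u)$; your bookkeeping there silently contradicts the restriction to $G=\one$ that you imposed. The paper instead compares $p^mF\ast G$ and $G\ast F$ term by term for \emph{arbitrary} $m$: matched terms differ by the factor $p^{m}\prod_{i\in I}\bigl[\phi(u,a_i)\prod_{j\notin I}\omega(a_j,a_i)/\omega(a_i,a_j)\bigr]$, where $I$ is the set of variables placed in the $F$-slots, and \eqref{BAE} for each $i\in I$ forces this to equal $1$ because the intra-$I$ ratios cancel pairwise.

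A second, related gap is the ``if'' direction. Since $\J_p$ is not an $Sh_{0}$-submodule (the paper says so explicitly), $ev_a$ has no multiplicativity that would let vanishing on degree-one relations propagate to all of $\J_p$; your appeal to ``$Sh_0$ generated by $Sh_{0,1}$'' needs an actual identity. One that works: setting $C(G,F)=G\ast F-p^{\deg F}F\ast G$, the bimodule axioms give $C(G,F_1\ast F_2)=C(G\ast F_1,F_2)+p^{\deg F_2}\,C(F_2\ast G,F_1)$, so by induction $\J_p$ is spanned by $C(G,F)$ with $F\in Sh_{0,1}$ and $G\in Sh_{1}(u)$ \emph{arbitrary} (not $G=\one$). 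With that repair, your final paragraph is salvageable and even adds value: testing $G\ast x^k-p\,x^k\ast G$ for varying $k$ and using a Vandermonde argument in the distinct $a_i$ gives $c_i(1-pB_i)=0$ for each $i$, where $B_i=\phi(u,a_i)\prod_{j\ne i}\omega(a_j,a_i)/\omega(a_i,a_j)$ and $c_i=G(a_1,\dots,\widehat{a_i},\dots,a_n)\prod_{j\ne i}\omega(a_i,a_j)$; you must still check that $c_i$ can be made nonzero (true for generic $a$, using $a_i\ne a_j$, $a_i\ne u$, and $g(a_i,a_j)\ne0$). This supplies rigor for the ``only if'' direction that the paper's one-line proof glosses over, but as written your proposal proves neither implication.
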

\begin{proof}
Let $F\in Sh_{0,m}$, $G\in Sh_{1,k}(u)$ and $m\ge1$, 
$m+k=n$. Then $p^mF*G$ and $G*F$ by definition are symmetrization 
and can be compared term-wise so that the substitutions of variables match. 
These terms become equal if and only if (\ref{BAE}) holds.
\end{proof}

\begin{thm}\label{spectrum}
Let $a=(a_1,\cdots,a_n)$ 
be a solution of \eqref{BAE} such that $ev_a$ is non-zero. 
Then the restriction of $ev_a$ to $N=\kappa(\F(u))$ is an eigenvector in $\F(u)^*$ of the first integral 
of motion $\tilde{e}^\perp_0(p)$ \eqref{e0p} with the eigenvalue
\begin{align*}
E_1(a)=-\sum_{i=1}^n a_i+\gamma_1,
\end{align*}
where we recall that $\gamma_1=u/((1-q_1)(1-q_3))$.
For generic $p$, $ev_a$ is a joint eigenvector of 
$\{I_{\F,n}(p)\}_{n=1}^\infty$. 
\end{thm}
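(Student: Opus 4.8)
The plan is to assemble the three ingredients already established: the operatorial identity $\pi_p h_1=\tilde{e}^\perp_0(p)$ of Theorem~\ref{main}, the vanishing criterion $ev_a(\J_p)=0$ of Lemma~\ref{Bethe}, and the fact recorded in \eqref{h1 act} that $h_1$ acts on $Sh_{1,n}(u)$ by multiplication by the symmetric function $-\sum_{i=1}^n x_i+\gamma_1$. First I would reduce the statement to an identity of functionals on $N$. Under the identification $N\simeq\F(u)$, Theorem~\ref{main} says that $\tilde{e}^\perp_0(p)$ acts on $N$ by $w\mapsto \pi_p(h_1 w)$. Because $ev_a$ is supported on the $n$-variable component and the splitting $Sh_{1}(u)=\J_p\oplus N$ of \eqref{L p plus} is graded by the number of variables (each application of $h^\perp_{-r}$ in the definition of $\kappa$ raises the variable count by $r$), the restriction $ev_a|_N$ is a well-defined nonzero element of $N^*\simeq\F(u)^*$ concentrated in degree $n$. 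To prove the first assertion it therefore suffices to show that
\begin{align*}
ev_a\big(\pi_p(h_1 w)\big)=E_1(a)\,ev_a(w)\qquad\text{for all }w\in N\cap Sh_{1,n}(u),
\end{align*}
which is exactly the eigenvector equation for the transpose of $\tilde{e}^\perp_0(p)$.

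The decisive step is to remove the projection. For any $F\in Sh_{1}(u)$ one has $F-\pi_p(F)\in\J_p$; when $a$ solves the Bethe equation \eqref{BAE}, Lemma~\ref{Bethe} gives $ev_a(\J_p)=0$ and hence $ev_a(\pi_p(F))=ev_a(F)$. Taking $F=h_1 w$ and then applying \eqref{h1 act} yields
\begin{align*}
ev_a\big(\pi_p(h_1 w)\big)=ev_a(h_1 w)=\Big(-\sum_{i=1}^n a_i+\gamma_1\Big)\,ev_a(w)=E_1(a)\,ev_a(w).
\end{align*}
This proves that $ev_a|_N$ is an eigenvector of $\tilde{e}^\perp_0(p)$ with eigenvalue $E_1(a)$, and in view of \eqref{first-int} it is simultaneously an eigenvector of $I_{\F,1}(p)$ with eigenvalue $k\,E_1(a)$. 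Conceptually this is the heart of the method: the Bethe equation is precisely the condition making $ev_a$ annihilate $\J_p$, which is what lets the trivial multiplication operator $h_1$ replace the genuine Hamiltonian $\tilde{e}^\perp_0(p)$ after projection.

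For the joint eigenvector statement I would argue by commutativity together with genericity. The integrals of motion $\{I_{\F,n}(p)\}_{n=1}^\infty$ pairwise commute, hence so do their transposes on $\F(u)^*$. For generic $p$ the operator $I_{\F,1}(p)=k\,\tilde{e}^\perp_0(p)$ has simple spectrum on each graded component, so the eigenspace of its transpose containing $ev_a|_N$ is one-dimensional; since every $I_{\F,n}(p)$ preserves the eigenspaces of $I_{\F,1}(p)$, the functional $ev_a|_N$ is forced to be a common eigenvector of the entire family. I expect this last point to be the only genuine obstacle: the simple-spectrum property for generic $p$ is the input asserted by genericity but tied to the completeness question deferred in the paper, whereas the first and principal assertion is an immediate consequence of Theorem~\ref{main}, Lemma~\ref{Bethe}, and \eqref{h1 act}.
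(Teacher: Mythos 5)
Your proposal is correct and follows essentially the same route as the paper's own proof: the eigenvalue equation is obtained exactly as in the paper via the chain $ev_a(\tilde{e}^\perp_0(p)G)=ev_a(\pi_p(h_1G))=ev_a(h_1G)=E_1(a)\,ev_a(G)$, using Theorem \ref{main}, Lemma \ref{Bethe} to kill $\J_p$, and \eqref{h1 act}, and the joint-eigenvector claim is handled by the same simple-spectrum-plus-commutativity argument. The only cosmetic difference is that the paper anchors the simple-spectrum property at $p=0$ (where $I_{\F,1}(0)$ is essentially $h_1$, with visibly distinct eigenvalues $\gamma_1-u\sum_{(i,j)\in\la}q_3^{i-1}q_1^{j-1}$) and deforms to generic $p$, whereas you assert genericity directly; this is not tied to the deferred completeness question.
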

\begin{proof}
When $p=0$, the operator $I_{\F,1}(p)$ on $\F(u)$ has simple spectra. 
Hence it is enough to show that $ev_a$ is an 
eigenvector of $\tilde{e}^\perp_0(p)$ with eigenvalue $E_1(a)$. 

We simply have for any $G\in N$
\begin{align*}
\tilde{e}^\perp_0(p) ev_a(G)=ev_a(\tilde{e}^\perp_0(p)G)=ev_a(h_1G)=E_1(a) ev_a(G),
\end{align*}
where the second equality follows from Theorem \ref{main} together with Lemma \ref{Bethe} and the third equality follows from (\ref{h1 act}).
\end{proof}

\subsection{The off-shell Bethe vector}
For us an off-shell Bethe vector is a vector depending on parameters $a_i$ such that if  $a_i$ satisfy the Bethe equation, it becomes an eigenvector of Hamiltonians. However, such a requirement does not determine it uniquely.

An off-shell Bethe vector is obviously given by the formula 
$(\id\otimes ev_a \circ\kappa)K)$, 
where 
\begin{align*}
K=\sum_{\lambda}
\frac{\bra{\emptyset}h^\perp_{\lambda}\otimes h^\perp_{-\lambda}\ket{\emptyset}}
{\bra{\emptyset}h^\perp_{\lambda}h^\perp_{-\lambda}\ket{\emptyset}}
\end{align*}
is the canonical element 
of the space $\F^*(u)\otimes\F(u)$.
Here and after, for a partition $\la=(\la_1,\cdots,\la_{\ell(\la)})$
we use the notation 
$h^\perp_\la=h^\perp_{\la_1}\cdots h^\perp_{\la_{\ell(\la)}}$, etc..
  
More generally, the off-shell Bethe vector is given by $(\id\otimes ev_a)(K_p+(1\otimes\kappa)K)$ where $K_p\in\F(u)^*\otimes \J_p$. Our goal is to give an explicit formula in terms of partitions for a suitable choice of $K_p$.

Let $\La$ denote the space of symmetric functions. 
For $\la\in\cP$, let $p_\la,m_\la\in\La$ be the power sum and the monomial symmetric functions, 
respectively. 
Using the rescaled generators
\begin{align*}
&\tilde{h}^\perp_r 
=r(1-q_1^r)q_2^{r/2}q_3^rh^\perp_r ,
\end{align*}
we identify the algebra generated by $h_r^\perp$, $r>0$ with $\La$ by
\begin{align*}
\nu^*:\Lambda \overset{\sim}{\longrightarrow}
\C[h^\perp_r]_{r>0},\quad  p_\lambda\mapsto \tilde{h}^\perp_\lambda.
\end{align*}
Introduce further the elements of $Sh_{0}$
\begin{align}
&\epsilon^{(q_3)}_{\lambda}=\epsilon^{(q_3)}_{\lambda_1}\ast\cdots\ast
\epsilon^{(q_3)}_{\lambda_{\ell(\lambda)}},
\quad 
\epsilon^{(q_3)}_{n}(x)=\prod_{i<j}\frac{(x_i-q_3x_j)(x_i-q_3^{-1}x_j)}{(x_i-x_j)^2}\,.
\label{bottom}
\end{align}

Since for $v\in \F(u)$ we have
\begin{align*}
L_{\emptyset,h^\perp_{-r}v}
=[h^\perp_{-r},L_{\emptyset,v}]_{q^{r}}
\equiv (1-p^rq_2^{r/2})h^\perp_{-r}L_{\emptyset,v}\bmod \J_p\,,
\end{align*}
we obtain
\begin{align}
(\id\otimes \kappa) K\simeq\sum_{\lambda} \frac{\bra{\emptyset}h^\perp_\lambda}
{\bra{\emptyset}h^\perp_\lambda h^\perp_{-\lambda}\ket{\emptyset}}
\otimes 
\bigl(
\prod_{i=1}^{\ell(\lambda)}(1-p^{\lambda_i}
q_2^{\lambda_i/2})
h^\perp_{-\lambda_i}
\ast\one\bigr)
=\sum_{\lambda}
\frac{\bra{\emptyset}\alpha(
h^\perp_{\lambda}
)\otimes
\sigma(h^\perp_{-\lambda})\ast\one}
{\bra{\emptyset}h^\perp_{\lambda}
h^\perp_{-\lambda}\ket{\emptyset}},
\label{K3}
\end{align}
where $\alpha$ is an algebra homomorphism given by
\begin{align}
\alpha(h^\perp_r)=(1-p^rq_2^{r/2})h^\perp_r\,. 
\label{alpha}
\end{align}
Here $\simeq$ denotes equality  modulo vectors in $\F(u)^*\otimes \J_p$.

Recall the isomorphism $\sigma:\E_>\simeq Sh_0$ in Proposition \ref{prop:sigma}.
The following formula is known (\cite{FHSSY}, Proposition 1.12).
\begin{align}
\sum_{\lambda\in\cP}
\frac{h^\perp_{\lambda}\otimes \sigma(h^\perp_{-\lambda})}
{\bra{\emptyset}h^\perp_{\lambda}h^\perp_{-\lambda}\ket{\emptyset}}=
\sum_{\lambda\in\cP}
\frac{1}{(q_1-1)^{|\la|}}\frac{1}{\prod_{i=1}^{\ell(\lambda)}\lambda_i!}
\nu^*\bigl(m_\lambda\bigr)\otimes 
\epsilon^{(q_3)}_\lambda.
\label{Kn}
\end{align}

Combining \eqref{Kn} and \eqref{K3}, 
we arrive at the following. 

\begin{thm}\label{off-shell}
An off-shell Bethe vector in $\F(u)^*$ is given by
\begin{align*}
ev_a=
\sum_{\lambda\in\cP}
\frac{1}{(q_1-1)^{|\la|}}\frac{1}{\prod_{i=1}^{\ell(\lambda)}\lambda_i!}
ev_a\bigl(\epsilon^{(q_3)}_\lambda
\bigr)
\times 
\bra{\emptyset}\alpha\Bigl(\nu^*\bigl(m_\lambda\bigr)\Bigr)\,.
\end{align*}
Here $\epsilon^{(q_3)}$ is given by \eqref{bottom}, 
$\nu^*(m_\lambda)$ denotes the element of $\F^*(u)$ corresponding to
the monomial symmetric function $m_\la$, 
and $\alpha$ stands for the substitution \eqref{alpha}. 
\end{thm}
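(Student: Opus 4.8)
The plan is to obtain the formula as a bookkeeping combination of the two displayed identities \eqref{K3} and \eqref{Kn} that immediately precede the statement, followed by specialization of the second tensor factor through $ev_a$. I would carry this out in three steps.

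First, I would fix the freedom in the off-shell Bethe vector $(\id\otimes ev_a)\bigl(K_p+(1\otimes\kappa)K\bigr)$ by choosing $K_p\in\F(u)^*\otimes\J_p$ to be exactly the correction that upgrades the congruence \eqref{K3} to an equality. Then $K_p+(1\otimes\kappa)K$ equals, on the nose, the representative
\[
R=\sum_{\lambda}\frac{\bra{\emptyset}\alpha(h^\perp_\lambda)\otimes\sigma(h^\perp_{-\lambda})\ast\one}{\bra{\emptyset}h^\perp_\lambda h^\perp_{-\lambda}\ket{\emptyset}}
\]
appearing on the right of \eqref{K3}; this is legitimate precisely because \eqref{K3} was derived above as an identity modulo $\F(u)^*\otimes\J_p$.

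Second, I would transport the resummation \eqref{Kn} through the first tensor factor. Applying the linear map $\bigl(\bra{\emptyset}\circ\alpha\bigr)\otimes\bigl(\,\cdot\ast\one\bigr)$ to both sides of \eqref{Kn}, the left side reproduces $R$ verbatim: here I use that $\alpha$ is an algebra homomorphism of $\C[h^\perp_r]_{r>0}\cong\La$ with $\alpha(h^\perp_\lambda)=\prod_i(1-p^{\lambda_i}q_2^{\lambda_i/2})h^\perp_{\lambda_i}$, which matches the scalar factors already visible in \eqref{K3}, and that $\sigma(h^\perp_{-\lambda})$ is the shuffle element whose $\ast\one$-image lies in $Sh_{1}(u)$. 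The right side then yields
\[
R=\sum_{\lambda\in\cP}\frac{1}{(q_1-1)^{|\lambda|}}\frac{1}{\prod_{i=1}^{\ell(\lambda)}\lambda_i!}\,\bra{\emptyset}\alpha\bigl(\nu^*(m_\lambda)\bigr)\otimes\bigl(\epsilon^{(q_3)}_\lambda\ast\one\bigr).
\]

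Third, I would apply $(\id\otimes ev_a)$ and clear up two bookkeeping points. Since $ev_a$ annihilates $Sh_{1,m}(u)$ for $m\neq n$, only partitions with $|\lambda|=n$ survive, so the sum collapses to a finite one landing in the degree-$n$ part of $\F(u)^*$. Moreover, straight from the definition of the left $Sh_{0}$-action on $Sh_{1}(u)$ one has $\epsilon^{(q_3)}_\lambda\ast\one=\epsilon^{(q_3)}_\lambda\cdot\prod_i\phi(u,x_i)$, so that $ev_a(\epsilon^{(q_3)}_\lambda\ast\one)=\epsilon^{(q_3)}_\lambda(a)\,\prod_{i=1}^n\phi(u,a_i)$; the factor $\prod_i\phi(u,a_i)$ is one and the same scalar for every surviving term and, an off-shell Bethe vector being determined only up to an overall constant, it may be absorbed, leaving the coefficient $ev_a(\epsilon^{(q_3)}_\lambda)$ as stated. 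I do not expect a genuine obstacle, since all the substantive content already resides in \eqref{K3}, established just above, and in the cited identity \eqref{Kn} of \cite{FHSSY}; the only thing demanding care is the correct transport of $\alpha$ across the power-sum/monomial change of basis underlying \eqref{Kn} and the scalar normalization produced by $\ast\one$.
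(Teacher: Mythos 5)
Your proposal is correct and takes essentially the same route as the paper, whose proof consists precisely of combining \eqref{K3} with \eqref{Kn} and applying $\id\otimes ev_a$. Your additional bookkeeping---choosing $K_p$ so that the congruence \eqref{K3} becomes an equality, observing that only partitions with $|\la|=n$ survive evaluation, and absorbing the common nonzero scalar $\prod_{i=1}^n\phi(u,a_i)$ produced by $\epsilon^{(q_3)}_\la\ast\one$---merely makes explicit the normalization conventions the paper leaves implicit, and is legitimate under the paper's own (non-unique) definition of an off-shell Bethe vector.
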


\section{Bethe ansatz in tensor product of Fock modules}\label{mult fock sec}
The method described above for diagonalizing $\tilde e_0^\perp(p)$ 
in the Fock module works for the case of other highest weight modules with straightforward modifications. Here we give some detail for the case of generic tensor products of Fock modules.

\medskip

Consider $V=\F(u_1)\otimes \F(u_2)\otimes\dots \otimes \F(u_k)$, where $u_1,\dots,u_k\in\C^\times$ are generic numbers. 
For generic $u_1,\dots,u_k$ this module is well defined and it is a bounded tame irreducible module, cf. \cite{FFJMM2}.
Set $\bs u=(u_1,\dots,u_k)$ and denote the lowest weight vector of $V$ by $\ket{\emptyset}$.

We define the corresponding shuffle algebra 
$Sh_{1}(\bs u)=\oplus_{n=0}^\infty Sh_{1,n}(\bs u)$. 
The space $Sh_{1,n}(\bs u)$ consists of all rational functions of the form
\begin{align*}
&F(x_1,\cdots,x_n)
=\frac{f(x_1,\cdots,x_n)}
{\prod_{1\le i<j\le n}(x_i-x_j)^2
\prod_{i=1}^n\prod_{j=1}^k(x_i-u_j)}\,,
\\
&f(x_1,\cdots,x_n)
\in \C[x_1^{\pm1},\cdots,x_n^{\pm1}]^{\GS_n}\,,
\end{align*}
such that they satisfy both the wheel condition \eqref{wheel1} 
and 
additional wheel conditions 
\begin{align*}
&f(u_i,q_2u_i,x_3,\cdots,x_n)=0\,, \qquad i=1,\dots, k.
\end{align*}
We denote the element $1\in Sh_{1,0}(\bs u)=\C$ by $\one$. 

Next we set
\begin{align*}
\phi(\bs u,x)=\prod_{i=1}^k\phi(u_i,x)=\prod_{i=1}^k\frac{qu_i-q^{-1}x}{u_i-x}\,
\end{align*}
and define the left and the right action of $Sh_{0}$ on $Sh_{1}(\bs u)$ by the same formulas as in the case $k=1$.

Similarly, we extend the left action of $Sh_{0}$ to the left action of $\E'$,
define $\J_0=Sh_{1}(\bs u)*Sh_{0}'$ and prove
\begin{prop}
We have an isomorphism  $Sh_{1}(\bs u)/\J_0\simeq V$
of left $\E'$ module sending $\one$ to $\ket{\emptyset}$.
\end{prop}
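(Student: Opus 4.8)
The plan is to follow the proof of Proposition \ref{fock isom} closely, replacing the single pole $u$ by the collection $u_1,\dots,u_k$ and the Fock module $\F(u)$ by the tensor product $V$. First I would check that $\one\in Sh_{1,0}(\bs u)$ is a lowest weight vector: one has $f_k\cdot\one=0$ for degree reasons, while the weight formula \eqref{h act}, now with $\gamma_r$ read off from $\phi(\bs u,z)=\prod_i\phi(u_i,z)$, gives $\psi^{\pm}(z)\one=\phi(\bs u,z)\one$. Since $\J_0=Sh_{1}(\bs u)\ast Sh_{0}'$ is concentrated in principal degree $\ge1$, the degree-zero component of $Sh_{1}(\bs u)/\J_0$ is exactly $\C\one$; hence the lowest weight space is one dimensional and carries the same weight $\phi(\bs u,z)$ as the lowest weight vector of $V$.

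Consequently the $\E'$-submodule $U\subseteq Sh_{1}(\bs u)/\J_0$ generated by $\one$ is a lowest weight module with lowest weight $\phi(\bs u,z)$. Because the lowest weight space is one dimensional, the universal lowest weight module $M_{\phi(\bs u)}$ has a unique maximal proper submodule, so any nonzero quotient of it — in particular $U$ — surjects onto the irreducible quotient. For generic $\bs u$ the module $V$ is precisely this irreducible lowest weight module (it is bounded, tame and irreducible by \cite{FFJMM2}), whence there is a surjection $U\twoheadrightarrow V$. Therefore $\dim U_n\ge\dim V_n$, while trivially $\dim U_n\le \dim\bigl(Sh_{1}(\bs u)/\J_0\bigr)_n$. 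Thus the statement reduces to the single inequality $\dim\bigl(Sh_{1}(\bs u)/\J_0\bigr)_n\le\dim V_n$ for every $n$: combined with the above it forces $U=Sh_{1}(\bs u)/\J_0$ and $U\simeq V$.

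The remaining, and main, task is this dimension count, for which I would extend the Gordon filtration of Appendix \ref{gordon sec} (Corollary \ref{Gordon-cor}) from one pole to $k$ poles. The target count is $\dim V_n=\#\{(\la^{(1)},\dots,\la^{(k)})\in\cP^k\mid \sum_{i}|\la^{(i)}|=n\}$, with generating series $\bigl(\prod_{m\ge1}(1-t^m)^{-1}\bigr)^k$. The expectation is that the filtration and specialization maps $\rho_{\bla}$ of the appendix generalize so that each associated graded piece of $Sh_{1}(\bs u)/\J_0$ is at most one dimensional and is indexed by a $k$-tuple of partitions $\bla=(\la^{(1)},\dots,\la^{(k)})$, the partition $\la^{(i)}$ recording the behaviour of a function near the pole at $u_i$; the extra wheel conditions $f(u_i,q_2u_i,x_3,\dots,x_n)=0$ at each $u_i$ are exactly what pins down the leading term there. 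The hard part will be the book-keeping of these simultaneous specializations: I must show that the evaluations attached to distinct tuples $\bla$ are triangular with respect to the filtration, so that the graded pieces are genuinely at most one dimensional and the upper bound $\dim\bigl(Sh_{1}(\bs u)/\J_0\bigr)_n\le\dim V_n$ holds, and I must verify that reduction modulo $\J_0=Sh_{1}(\bs u)\ast Sh_{0}'$ creates no unexpected collisions between the contributions of different poles. Once this triangularity is in place the upper bound follows, and with it the isomorphism.
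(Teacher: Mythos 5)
Your proposal is correct and follows essentially the same route as the paper: the paper's proof likewise reduces everything to the Gordon filtration of Appendix \ref{gordon sec} extended to evaluation maps indexed by $k$-tuples of partitions, one anchored at each pole $u_i$, with genericity of $u_1,\dots,u_k$ guaranteeing exactly the absence of interplay between the different partitions that you flag as the remaining book-keeping. Your preliminary reduction (lowest weight vector $\one$ of weight $\phi(\bs u,z)$, surjection onto the irreducible module $V$, hence only the upper bound $\dim\bigl(Sh_{1}(\bs u)/\J_0\bigr)_n\le\dim V_n$ is needed) is the same argument used for Proposition \ref{fock isom}, just spelled out more explicitly than the paper does.
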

\begin{proof}
The crucial part is to do the Gordon filtration to find the size of the quotient.
It is done similarly to the case of $k=1$ discussed in Appendix \ref{gordon sec}.
The evaluation maps in the general case will depend on $k$ partitions, each starts evaluation in $u_i$. 
Since $u_i$ are generic, there is no interplay between different partitions.
\end{proof}

We define the map $\pi$  to be the projection map.

For the definition of $\kappa$ and the space $N$, 
it is not enough to use $h_{-r}^\perp$ only since 
$V$ is not a cyclic Heisenberg module. 
So, in addition to  \eqref{sh q com} we impose the condition, 
\begin{align*}
\kappa(e^\perp_{-n}v)=[e^\perp_{-n},\kappa(v)]_{q^{-n}}-
q^{-n}\sum_{j\ge1}\kappa(\psi^{+,\perp}_{j}v) e^\perp_{-n-j}, 
\end{align*}
for all $v\in V$, $n>0$, cf. \eqref{LL2}. Note that the sum on the right hand side is finite for any $v$.

Then $N$ is well defined and Corollary \ref{cyclic} still holds.

The vacuum to vacuum matrix element of $L$ operator now is just the product of operators in (\ref{Lee2}), 
$L_{\emptyset,\emptyset}(\bs u)=\prod_{i=1}^k L_{\emptyset,\emptyset}(u_i)$. 
The vacuum eigenvalue $\gamma_r$, cf. \eqref{h-eig}, for $V$ reads 
$\gamma_r=\frac{1-q_2^r}{r\kappa_r}\sum_{i=1}^k u_i^r$. 
With this change, Lemma \ref{LL-comm} remains valid.
We use the operator $L_{\emptyset,\emptyset}(\bs u)$ to define the map $\iota$ as in the case of $k=1$. 

We define the space of $p$ commutators $\J_p$ in the same way. And then Theorem \ref{main} holds just the same.
We introduce the Bethe equation 
\begin{align}
1=q^{-k}p\cdot \prod_{j=1}^k\frac{a_i-q_2 u_j}{a_i-u_j}
\prod_{j(\neq i)}\frac{(a_j-q_1a_i)(a_j-q_2a_i)(a_j-q_3a_i)}
{(a_j-q_1^{-1} a_i)(a_j-q_2^{-1} a_i)(a_j-q_3^{-1} a_i)}\,,
\quad i=1,\cdots,n\,.
\label{BAE k}
\end{align}
and arrive at the generalization of Theorem \ref{spectrum}.

\begin{thm}\label{spectrum k}
Let $a=(a_1,\cdots,a_n)$ be a solution of \eqref{BAE k} such that $ev_a$ is non-zero. 
Then the restriction of $ev_a$ to $N=\kappa(V)$ is an eigenvector in $V^*$ of the first integral 
of motion $\tilde{e}^\perp_0(p)$ \eqref{e0p} with the eigenvalue
\begin{align*}
E_1(a)=-\sum_{i=1}^n a_i+\frac{\sum_{i=1}^k u_i}{(1-q_1)(1-q_3)}\,.
\end{align*}
For generic $p$, $ev_a$ is a joint eigenvector of 
$\{I_{\F,n}(p)\}_{n=1}^\infty$. 
\end{thm}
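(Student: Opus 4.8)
The plan is to run the proof of Theorem~\ref{spectrum} line by line, replacing each single-Fock ingredient by its tensor-product counterpart. Exactly three ingredients enter: (i) the multi-Fock form of Theorem~\ref{main}, namely $\pi_p h_1=\tilde{e}^\perp_0(p)$ under the identification $N=\kappa(V)\simeq V$, which the text has already declared to hold unchanged; (ii) the direct sum $Sh_1(\bs u)=\J_p\oplus N$ for generic $p$, which follows from Corollary~\ref{cyclic} together with the definition of $\J_p$; and (iii) the explicit left action \eqref{h1 act} of $h_1$ on $Sh_{1,n}(\bs u)$, now with $\gamma_1=\sum_{i=1}^k u_i/((1-q_1)(1-q_3))$. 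The only ingredient not already stated for the tensor product is the analogue of Lemma~\ref{Bethe}, which I would establish first.

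For $F\in Sh_{0,m}$ and $G\in Sh_{1,k}(\bs u)$ with $m\ge1$ and $m+k=n$, both $p^m F\ast G$ and $G\ast F$ are symmetrizations of products over the same variables and can be matched term by term. The only change from the $k=1$ computation is that the factor $\prod_{i=1}^m\phi(u,x_i)$ occurring in the left product is replaced by $\prod_{i=1}^m\phi(\bs u,x_i)=\prod_{i=1}^m\prod_{j=1}^k\phi(u_j,x_i)$. Consequently the two symmetrized expressions agree under $ev_a$ precisely when, in the would-be Bethe equation \eqref{BAE}, the single factor $(a_i-q_2u)/(a_i-u)$ is promoted to $\prod_{j=1}^k(a_i-q_2u_j)/(a_i-u_j)$ and the scalar $q^{-1}$ to $q^{-k}$; that is, exactly when $a$ satisfies \eqref{BAE k}. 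Hence $ev_a(\J_p)=0$ if and only if \eqref{BAE k} holds.

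Granting these, the eigenvector computation is identical to that of Theorem~\ref{spectrum}. For any $G\in N$,
\begin{align*}
\tilde{e}^\perp_0(p)\,ev_a(G)
&=ev_a\bigl(\tilde{e}^\perp_0(p)G\bigr)
=ev_a\bigl(\pi_p(h_1 G)\bigr)\\
&=ev_a(h_1 G)
=E_1(a)\,ev_a(G),
\end{align*}
where the first equality is the definition of the right $\E'$ action on $V^*$, the second is the multi-Fock Theorem~\ref{main}, the third uses $h_1G-\pi_p(h_1G)\in\J_p$ together with $ev_a(\J_p)=0$, and the last is \eqref{h1 act} evaluated at $a$ with the above $\gamma_1$. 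This shows $ev_a|_N$ is an eigenvector of $\tilde{e}^\perp_0(p)$ with eigenvalue $E_1(a)$. For the joint-eigenvector claim at generic $p$, I would argue as in Theorem~\ref{spectrum}: since $V$ is tame for generic $\bs u$, the operator $I_{\F,1}(0)$ has simple spectrum, so for generic $p$ the operator $\tilde{e}^\perp_0(p)$, proportional to $I_{\F,1}(p)$, still has simple spectrum; as the whole commuting family $\{I_{\F,n}(p)\}$ preserves its one-dimensional eigenspaces, $ev_a$ is automatically a joint eigenvector.

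The step requiring the most care is the transfer of Theorem~\ref{main} and of the splitting itself, since $V$ is no longer cyclic over the Heisenberg subalgebra and the map $\kappa$ must be enlarged by the extra clause $\kappa(e^\perp_{-n}v)=[e^\perp_{-n},\kappa(v)]_{q^{-n}}-q^{-n}\sum_{j\ge1}\kappa(\psi^{+,\perp}_{j}v)e^\perp_{-n-j}$. One must check that this enlarged $\kappa$ remains compatible with the $L$-operator commutation relations \eqref{LL2}--\eqref{LL3}, so that $\iota(N)=\mc N$ and $\pi\iota^{-1}(L_{\emptyset,v})=v$ continue to hold (cf.\ Lemma~\ref{L ident}) and the proof of Theorem~\ref{main} goes through inside $\mc S(u)\subset\hat\E_{\gge}$ verbatim. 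This is precisely where genericity of $u_1,\dots,u_k$ is used: as in the preceding proposition, the absence of interplay between the $k$ partitions makes the Gordon-filtration dimension count of Appendix~\ref{gordon sec}, and hence the decomposition $Sh_1(\bs u)=\J_0\oplus N$, carry over without change.
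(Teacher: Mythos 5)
Your proposal is correct and takes essentially the paper's own route: the paper proves Theorem~\ref{spectrum k} precisely by transporting the proof of Theorem~\ref{spectrum} (simple spectrum at $p=0$, then the chain $ev_a(\tilde{e}^\perp_0(p)G)=ev_a(h_1G)=E_1(a)\,ev_a(G)$ via the multi-Fock Theorem~\ref{main} and the Bethe lemma) with the modified $\kappa$, $\gamma_r=\frac{1-q_2^r}{r\kappa_r}\sum_{i=1}^k u_i^r$ and the product weight $\phi(\bs u,x)$, exactly as you do. Your explicit term-wise verification of the multi-Fock analogue of Lemma~\ref{Bethe}, yielding the factor $q^{-k}\prod_{j=1}^k(a_i-q_2u_j)/(a_i-u_j)$ in \eqref{BAE k}, merely spells out a step the paper leaves implicit.
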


\appendix

\section{Gordon filtration}\label{gordon sec}
In this section we study the size of the space $Sh_{1}(u)$ using 
the technique of the Gordon filtration.
Our goal is to prove Corollary \ref{Gordon-cor} below. 

Let $n$ be a positive integer, and let $\la$ be a partition such that $|\la|\le n$. 
For an element $F\in Sh_{1,n}(u)$, we introduce an operation of specialization 
$\rho_\la(F)$ 
as follows. 

First we set 
\begin{align*}
\rho^{(0)}_\la(F)(y_1,\cdots,y_{\ell(\la)},x_{|\la|+1},\cdots,x_n)
=
F(x_1,\ldots,x_n)
\Bigl|_{x_{\la_1+\cdots+\la_{i-1}+j}
=q_1^{j-1}y_i\quad (1\le i\le \ell(\la),\ 1\le j\le \la_i)}\,.
\end{align*}
The wheel condition \eqref{wheel1}
implies that $\rho^{(0)}_\la(F)$ is divisible by the factor 
\begin{align}
&\prod_{1\le a<b\le \ell(\la)} \prod_{\genfrac{}{}{0pt}{}{1\le i\le \la_a-1}{1\le j\le \la_b}}
(q_1^{j-1}y_b-q_1^iq_2y_a)(q_1^{j-1}y_b-q_1^iq_3y_a)
\prod_{k=|\la|+1}^n\prod_{\genfrac{}{}{0pt}{}{1\le i\le \ell(\la)}{1\le j\le \la_i-1}}(x_k-q_1^jq_2y_i)(x_k-q_1^jq_3y_i).
\label{rho-la1}
\end{align}
Next we set
\begin{align*}
\rho^{(1)}_\la(F)(y_2,\cdots,y_{\ell(\la)},x_{|\la|+1},\cdots,x_n)
=\Bigl[(y_1-u)
\rho^{(0)}_\la(F)(y_1,\cdots,y_{\ell(\la)},x_{|\la|+1},\cdots,x_n)
\Bigr]
\Bigl|_{y_1=u}.
\end{align*}
Then $\rho^{(1)}_\la(F)$ is divisible further by the factor
\begin{align*}
\prod_{k=|\la|+1}^n (x_k-q_2u)\,
\end{align*}
due to the wheel conditions \eqref{wheel1} and \eqref{wheel2}. 
For $i\ge 2$, we remove a factor contained in \eqref{rho-la1} to define
\begin{align*}
&\rho^{(i)}_\la(F)(y_{i+1},\cdots,y_{\ell(\la)},x_{|\la|+1},\cdots,x_n)\\
&=\Bigl[(y_i-q_3^{i-1}u)^{-\la_i+1}
\rho^{(i-1)}_\la(F)(y_{i},\cdots,y_{\ell(\la)},x_{|\la|+1},\cdots,x_n)
\Bigr]\Bigl|_{y_i=q_3^{i-1}u}.
\end{align*}
Finally we set $\rho_\la(F)=\rho^{(\ell(\la))}_\la(F)$. 

At each step, the wheel condition produces further factors. 
Collecting them together, we find that 
\begin{align}
\rho_\la(F)\in Sh_{0,n-|\la|}\times
\prod_{k=|\la|+1}^n\Bigl[
\prod_{(i,j)\in\lambda}\omega(x_k,q_3^{i-1}q_1^{j-1}u)\times
\frac{1}{\prod_{(i,j)\in CC(\la)}(x_k-q_3^{i-1}q_1^{j-1}u)}
\Bigr].
\label{rho-la2}
\end{align}

Let $\cP_{\lle n}$ denote the set of all partitions $\la$ with $|\la|\lle n$. 
Define a total ordering $>$ on $\cP_{\lle n}$ by setting 
$\mu>\la$ iff there is a $k$ such that $\mu_1=\la_1,\cdots,\mu_{k-1}=\la_{k-1}$, $\mu_k>\la_k$. 
We introduce a decreasing filtration $\{V_{n,\la}\}_{\la\in\cP_{\lle n}}$ on the space $V_n=Sh_{1,n}(u)$ by setting 
\begin{align*}
V_{n,\la}=\bigcap_{\mu>\la}\Ker\rho_\mu\subset V_n\,.
\end{align*}

\begin{prop}\label{rho prop} If $|\la|>m$ then
$\rho_\la(Sh_{1,m}(u)\ast Sh_{0,n-m})=0$.

If $|\la|=m$ then $\rho_\la(Sh_{1,m}(u)\ast Sh_{0,n-m})
=\rho_\la(V_{n,\la})$.

If $|\la|=n$ then $\rho_\la(
V_{n,\la}
)=\C$.
\end{prop}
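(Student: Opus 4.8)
My plan is to deduce all three assertions from a single \emph{localization lemma} computing $\rho_\la(A\ast B)$ for $A\in Sh_{1,m}(u)$ and $B\in Sh_{0,n-m}$; since $Sh_{1,m}(u)\ast Sh_{0,n-m}$ is spanned by such products, this suffices. First I would write the shuffle product as a symmetrization and expand it over assignments of the $n$ arguments to the ``$Sh_1$-slot'' filled by $A$ (using $m$ arguments) and the ``$Sh_0$-slot'' filled by $B$ (using $n-m$ arguments), each term carrying the coupling factor $\prod\omega(x_B,x_A)$. Applying $\rho_\la$ freezes the first $|\la|$ arguments to the contents $q_3^{i-1}q_1^{j-1}u$ of the boxes $(i,j)\in\la$ and leaves the remaining $n-|\la|$ free. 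Two mechanisms then select the surviving terms. Since $\omega(x,y)=0$ whenever $x/y\in\{q_1,q_2,q_3\}$, and adjacent boxes of $\la$ have content ratios $q_1$ (one step right) and $q_3$ (one step down) while the genericity assumption on $q_1,q_2,q_3$ excludes every other coincidence among contents, a term can be nonzero only if the frozen boxes assigned to the $Sh_0$-slot form a Young subdiagram $\mu\subseteq\la$. On the other hand only the $Sh_1$-factor carries the simple pole $(x-u)^{-1}$, whereas the first specialization step multiplies by $(y_1-u)$ and sets $y_1=u$ at the box $(1,1)$; a term in which $(1,1)$ lies in the $Sh_0$-slot is regular there and is annihilated, forcing $(1,1)$ into the $Sh_1$-slot. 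Hence $\mu$ is a Young subdiagram avoiding $(1,1)$, i.e. $\mu=\emptyset$, so \emph{every frozen box is absorbed by $A$}. As $A$ has only $m$ arguments this requires $|\la|\le m$, which is the first assertion; and for $|\la|=m$ the assignment becomes unique, giving $\rho_\la(A\ast B)=\rho_\la(A)\,\Phi\,B$ with $\Phi$ the explicit product displayed in \eqref{rho-la2}.

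For the second assertion I would read off $\rho_\la(Sh_{1,m}(u)\ast Sh_{0,n-m})=\Phi\cdot Sh_{0,n-m}$ from this formula, using that $\rho_\la$ is nonzero on $Sh_{1,m}(u)$ (the third assertion in $m$ variables). The inclusion $\rho_\la(V_{n,\la})\subseteq\Phi\cdot Sh_{0,n-m}$ is immediate from \eqref{rho-la2}, so it remains to realize every $\Phi\cdot B$ inside $\rho_\la(V_{n,\la})$. I would choose $A_0\in V_{m,\la}$ with $\rho_\la(A_0)=1$ and check $A_0\ast B\in V_{n,\la}$: for $\mu>\la$ with $|\mu|>m$ this is the first assertion, while for $\mu>\la$ with $|\mu|\le m$ the same localization expresses $\rho_\mu(A_0\ast B)$ through $\rho_\mu(A_0)$, which vanishes because $A_0\in V_{m,\la}$. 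Then $\rho_\la(A_0\ast B)=\Phi\,B$ exhausts the image, giving equality.

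The third assertion is the base of the triangular structure, and I would prove it by induction along the total order on partitions of size $\le n$, descending from the maximal partition $(n)$, where $V_{n,(n)}=Sh_{1,n}(u)$ and one only needs an explicit function on which $\rho_{(n)}$ does not vanish. To reach a general $\la$ with $|\la|=n$, I would produce an element of $V_{n,\la}$ with nonzero $\rho_\la$ by building a rational function in $Sh_{1,n}(u)$ from linear factors tuned to the contents of $\la$, so that its $\rho_\mu$ vanish for every $\mu>\la$ while $\rho_\la$ does not: vanishing for the scalar functionals ($|\mu|=n$) reflects that distinct partitions have distinct content points by genericity, and vanishing for the function-valued ones ($|\mu|<n$) is arranged and cleaned up using the second assertion in fewer variables. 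The whole argument is thus an induction on the number of variables, the third assertion at level $n$ depending only on lower levels and on the order-induction inside level $n$; this ordering also resolves the apparent circularity between the second and third assertions.

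The main obstacle is the localization lemma, and within it the two genericity-sensitive points: confirming that among the contents $q_3^{i-1}q_1^{j-1}u$ only the adjacent ratios $q_1$ and $q_3$ create zeros of $\prod\omega$ — so that the surviving $Sh_0$-block is exactly a Young subdiagram and no spurious pole or cancellation intervenes — and the pole-at-$u$ argument forcing $(1,1)$ into the $Sh_1$-slot. Once these are secured the first two assertions are essentially bookkeeping, and the remaining care goes into the third: arranging the separating element so that the scalar functional $\rho_\la$ genuinely survives all the vanishing conditions imposed by the smaller $\mu>\la$, which is the delicate part of the induction.
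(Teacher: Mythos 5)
Your ``localization lemma'' is exactly the identity \eqref{rho-la4} that the paper asserts without proof, and your sketch of it is sound and in fact more detailed than the paper's: the coupling $\omega(x_F,x_G)$ vanishes whenever a frozen box in the $Sh_0$-slot sits immediately to the right of, or below, a frozen box in the $Sh_1$-slot (ratios $q_1$, $q_3$), so the $Sh_0$-boxes form a Young subdiagram of $\la$; the simple pole at $u$, extracted by the factor $(y_1-u)$ in the first specialization step, forces the box $(1,1)$ into the $Sh_1$-slot; hence the subdiagram is empty. With that, your handling of the first two assertions coincides with the paper's. One imprecision: the inclusion $\rho_\la(V_{n,\la})\subseteq \Phi\cdot Sh_{0,n-m}$ is \emph{not} immediate from \eqref{rho-la2}, which still carries the denominator over concave corners; the paper's argument uses that membership in $V_{n,\la}$ (vanishing of $\rho_\mu$ for $\mu=\la$ augmented by a box at a concave corner, and such $\mu>\la$) supplies precisely the extra factor $\prod_{k}\prod_{(i,j)\in CC(\la)}(x_k-q_3^{i-1}q_1^{j-1}u)$ that cancels it.

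The genuine gap is the third assertion, equivalently the existence of $A_0\in V_{m,\la}$ with $\rho_\la(A_0)\neq 0$, on which your whole scheme (including the second assertion) rests. The paper settles this constructively, exhibiting $G=\e^{(q_1)}_{\la'}(x)\times\prod_{i=1}^m\frac{x_i-q_2u}{x_i-u}$, with $\e^{(q_1)}_{\la'}$ as in \eqref{bottom} with $q_3$ replaced by $q_1$, and checking $G\in V_{m,\la}$, $\rho_\la(G)\neq0$. Your inductive substitute does not close. For $\mu>\la$ with $|\mu|<n$ the cleaning can be made stable, but only if the corrections are taken of the form $G_\mu\ast H$ with $G_\mu\in V_{|\mu|,\mu}$ separating (so that, by your own localization, the correction lies in $V_{n,\mu}$ and does not re-contaminate partitions $\nu>\mu$ of size $\lle|\mu|$) --- which already presupposes separating elements at smaller sizes. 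Worse, for $\mu>\la$ with $|\mu|=n$ the functionals are scalar, there is no product space available ($Sh_{0,0}=\C$), and each subtraction $G\mapsto G-\rho_\mu(G)\,G_\mu$ shifts $\rho_\la(G)$ by the uncontrolled quantity $-\rho_\mu(G)\,\rho_\la(G_\mu)$; nothing in your argument guarantees that the fully cleaned element has $\rho_\la\neq0$, i.e.\ that $\rho_\la$ is nonzero on $V_{n,\la}$ at all. The appeal to ``distinct partitions have distinct content points by genericity'' shows the evaluations $\rho_\mu$, $|\mu|=n$, are pairwise distinct, but not that they are linearly independent on $Sh_{1,n}(u)$, which is what survival of $\rho_\la$ would require. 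This is precisely the step the explicit element is for, and without it (or an independent lower bound on the dimension of the quotient, which the paper does not invoke here) the induction does not produce the third assertion.
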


\begin{proof}
The first statement is straightforward as all terms in the symmetrization of
the product $Sh_{1,m}(u)\ast
Sh_{0,n-m}$ clearly vanish under evaluation
$\rho_\la$ if $|\la|>m$.

Let $m=|\la|$. 
From the definition of the space $V_{n,\la}$ we see that, 
if $F$ is an element of $V_{n,\la}$, then 
$\rho_\la(F)$ contains an extra factor 
$\prod_{k=m+1}^n\prod_{(i,j)\in CC(\la)}(x_k-q_3^{i-1}q_1^{j-1}u)$
which cancels the denominator of \eqref{rho-la2}. 
Therefore we have 
\begin{align}
\rho_\la(V_{n,\la})\ \subset \ Sh_{0,n-m}\times
\prod_{k=m+1}^n
\Bigl[\prod_{(i,j)\in\lambda}\omega(x_k,q_3^{i-1}q_1^{j-1}u)\Bigr].
\label{rho-la3}
\end{align}

On the other hand, the following identity holds for
elements $G\in Sh_{1,m}(u)$ and $H\in Sh_{0,n-m}$: 
\begin{align}
\rho_\la(G\ast H)=const. \ 
\rho_\la(G)\cdot H(x_{m+1},\cdots,x_n)\times \prod_{\genfrac{}{}{0pt}{}{m+1\le k\le n}{(i,j)\in\la}}\omega(x_k,q_3^{i-1}q_1^{j-1}u)\,, 
\label{rho-la4}
\end{align}
where $const.$ is non-zero. 
We choose
\begin{align*}
&G=\e_{\la'}^{(q_1)}(x)  
\times \prod_{i=1}^m\frac{x_i-q_2 u}{x_i-u}\quad \in Sh_{1,m}(u),
\end{align*}
where $\la'=(\la'_1,\cdots,\la'_{l'})$ is the partition dual to $\la$ and 
$\e^{(q_1)}_{\la'}(x)$ is defined in \eqref{bottom}
where the parameter $q_3$ is changed to $q_1$.
It is easy to see that $G\in V_{m,\la}$, and that 
$\rho_\la(G)$ is a non-vanishing complex number.  
Since $H\in Sh_{0,n-m}$ is arbitrary in \eqref{rho-la4}, we conclude
that the inclusion in  
\eqref{rho-la3} is 
actually an equality and that
$\rho_\la(V_{n,\la})=\rho_\la\left(Sh_{1,m}(u)\ast Sh_{0,n-m}\right)$.

In particular, if $m=n$, then 
$\rho_\la(V_{n,\la})$ is a one dimensional vector space. 
\end{proof}

\begin{cor}\label{Gordon-cor}
The space $Sh_{1,n}(u)/\left(\sum_{m=0}^{n-1} Sh_{1,m}(u)\ast Sh_{0,n-m}
\right)$ 
is a finite dimensional vector space of dimension $p(n)$, 
the number of partitions of $n$. 
\end{cor}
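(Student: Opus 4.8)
The plan is to show that the $p(n)$ specialization maps $\rho_\la$ with $|\la|=n$ assemble into an isomorphism between the quotient in question and $\C^{p(n)}$. Write $W=\sum_{m=0}^{n-1}Sh_{1,m}(u)\ast Sh_{0,n-m}$ for the subspace by which we quotient. When $|\la|=n$ there are no free variables left after the specialization (the index $k$ in \eqref{rho-la2} runs over the empty range), so $\rho_\la$ is just a linear functional $\rho_\la\colon Sh_{1,n}(u)\to\C$. Collecting these over all $\la\in\cP$ with $|\la|=n$ gives a single linear map $\Phi\colon Sh_{1,n}(u)\to\C^{p(n)}$, and I would prove that $\Phi$ induces an isomorphism $\bar\Phi\colon Sh_{1,n}(u)/W\to\C^{p(n)}$, from which the Corollary is immediate.

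First, $\Phi$ annihilates $W$: for each summand $Sh_{1,m}(u)\ast Sh_{0,n-m}$ with $m<n$ and each $\la$ with $|\la|=n>m$, the first statement of Proposition \ref{rho prop} gives $\rho_\la(Sh_{1,m}(u)\ast Sh_{0,n-m})=0$, so $\Phi$ descends to $\bar\Phi$. For surjectivity (which already forces $\dim\gge p(n)$) I would use the third statement of Proposition \ref{rho prop}: for each $\la$ with $|\la|=n$ it supplies an element $F_\la\in V_{n,\la}$ with $\rho_\la(F_\la)\neq0$. Since $F_\la\in V_{n,\la}=\bigcap_{\mu>\la}\Ker\rho_\mu$, we have $\rho_\mu(F_\la)=0$ for every $\mu>\la$, in particular for every size-$n$ partition $\mu>\la$. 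Listing the partitions of $n$ in increasing total order, the matrix $\bigl(\rho_\mu(F_\la)\bigr)$ is triangular with nonzero diagonal entries, hence invertible, so the vectors $\Phi(F_\la)$ are linearly independent and $\Phi$ is onto.

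It remains to establish injectivity of $\bar\Phi$, i.e. $\Ker\Phi\subseteq W$; this is the substantive direction and yields $\dim\lle p(n)$. I would argue by descending along the finite decreasing filtration $\{V_{n,\la}\}_{\la\in\cP_{\lle n}}$, reducing a given $F\in\Ker\Phi$ modulo $W$ one filtration step at a time. Write $\la_-$ for the immediate predecessor of $\la$ in the total order, so that $V_{n,\la_-}=\Ker\bigl(\rho_\la|_{V_{n,\la}}\bigr)$. Starting from $F\in Sh_{1,n}(u)$ (which is $V_{n,\la}$ for the largest $\la$), suppose the current representative lies in $V_{n,\la}$. If $|\la|=n$, I subtract the appropriate multiple of $F_\la$: since $F_\la\in V_{n,\la}$ has $\rho_\mu(F_\la)=0$ for $\mu>\la$, and $\rho_\la(F)=0$ by hypothesis, the difference lands in $V_{n,\la_-}$. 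If $|\la|<n$, the second statement of Proposition \ref{rho prop} identifies $\rho_\la(V_{n,\la})$ with $\rho_\la\bigl(Sh_{1,|\la|}(u)\ast Sh_{0,n-|\la|}\bigr)$, so I subtract an element of $Sh_{1,|\la|}(u)\ast Sh_{0,n-|\la|}\subseteq W$ matching $\rho_\la(F)$ and again descend to $V_{n,\la_-}$. Iterating down to the bottom of the filtration shows $F\in W$.

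The main obstacle is precisely the bookkeeping in this last step: for the subtraction to genuinely descend the filtration I must know that the correcting element of $W$ itself lies in $V_{n,\la}$, i.e. is annihilated by $\rho_\mu$ for all $\mu>\la$. This is immediate from the first statement of Proposition \ref{rho prop} when $|\mu|>|\la|$, and from the factorization \eqref{rho-la4} applied with $\mu$ in place of $\la$, together with $G_0\in V_{m,\la}$ ($m=|\la|$), when $|\mu|=|\la|$. The delicate case is $\mu>\la$ with $|\mu|<|\la|$, which does occur (e.g. $\mu=(3)>\la=(2,2)$); here I would take the correcting element to be the explicit $G_0\ast H$ constructed in the proof of Proposition \ref{rho prop} and verify directly, using the wheel conditions \eqref{wheel1} and \eqref{wheel2} together with the zeros built into $\e^{(q_1)}_{\la'}$, that its $\rho_\mu$-specialization vanishes. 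Once this compatibility is secured, the filtration collapses onto exactly the $p(n)$ one-dimensional pieces indexed by $|\la|=n$, and the count $\dim\bigl(Sh_{1,n}(u)/W\bigr)=p(n)$ follows.
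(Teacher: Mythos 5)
Your proposal is correct and takes essentially the same route as the paper: the paper's one-line proof is exactly the associated-graded count for the Gordon filtration, with Proposition \ref{rho prop} giving that $\mathrm{gr}_\la$ of the quotient vanishes for $|\la|<n$ and is one-dimensional for $|\la|=n$, and your map $\Phi$, the triangularity argument, and the downward induction through $\{V_{n,\la}\}$ are an explicit unwinding of that count. The compatibility point you flag --- that the correcting element $G_0\ast H$ must itself lie in $V_{n,\la}$, in particular be killed by $\rho_\mu$ for $\mu>\la$ with $|\mu|<|\la|$ --- is a genuine subtlety that the paper's terse proof leaves implicit, so carrying out the direct verification you sketch would make your write-up, if anything, more complete than the original.
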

\begin{proof}
We consider the associated graded space related to the
filtration $\{V_{n,\la}\}$ of the space $V_n=Sh_{1,n}(u)$. By the definition, we have $\on{gr}_\la(V_n)=\rho_\la(V_{n,\la})$. Now, for the factor space,  from Proposition \ref{rho prop} we have $\on{gr}_\la(Sh_{1,n}(u)/\left(\sum_{m=0}^{n-1} Sh_{1,m}(u)\ast Sh_{0,n-m}
\right)$ is zero if $|\la|<n$ and one dimensional if $|\la|=n$. The corollary follows.
\end{proof}

\bigskip

 {\bf Acknowledgments.}
We would like to thank Jun'ichi Shiraishi for enlightening discussions. 

The financial support from the Government of the Russian Federation within the framework of the implementation of the 5-100 Programme Roadmap of the National Research University  Higher School of Economics is acknowledged.
Research of MJ is supported by the Grant-in-Aid 
for Scientific Research B-23340039.

EM and BF would like to thank Kyoto University and Rikkyo University for hospitality during their visits when the main part of this work was completed.

\end{document}